\newtheorem{theorem}{Theorem}[section]
\newtheorem{lemma}[theorem]{Lemma}
\newtheorem{corollary}[theorem]{Corollary}
\newtheorem{proposition}[theorem]{Proposition}
\newtheorem{remark}[theorem]{Remark}
\newtheorem*{remark 1}{Remark}
\numberwithin{equation}{section} 
\newcommand{\gammalaplace}{(-\Delta)^\gamma}
\newcommand{\norm}[1]{\left\|#1\right\|}
\newcommand{\abs}[1]{\left|#1\right|}
\DeclarePairedDelimiter{\ceil}{\lceil}{\rceil}
\newcommand{\T}{\ensuremath{\mathbb{T}}}
\newcommand*{\R}{\ensuremath{\mathbb{R}}}
\renewcommand*{\S}{\ensuremath{\mathcal{S}}}
\newcommand*{\N}{\ensuremath{\mathbb{N}}}
\newcommand*{\Z}{\ensuremath{\mathbb{Z}}}
\newcommand*{\C}{\ensuremath{\mathbb{C}}}
\newcommand{\eps}{\varepsilon}
\newcommand*{\tr}{\ensuremath{\mathrm{tr\,}}}
\newcommand*{\Id}{\ensuremath{\mathrm{Id}}}
\newcommand*{\RR}{\ensuremath{\mathcal{R}}}
\newcommand*{\RRc}{\ensuremath{\mathcal{B}}}
\def\div{\mathop{\rm div}\nolimits}    
\def\supp{\mathop{\rm supp}\nolimits}    
\def\curl{\mathop{\rm curl}\nolimits}    
\title{Infinitely many Leray-Hopf solutions for the fractional Navier-Stokes equations}
\author{Luigi De Rosa}
\address{EPFL SB, Station 8, 
CH-1015 Lausanne, Switzerland}
\email{luigi.derosa@epfl.ch}
\date{\today}
\begin{document}

\begin{abstract}
We prove the ill-posedness for the Leray-Hopf weak solutions  of the incompressible and ipodissipative Navier-Stokes equations, when the power of the diffusive term $\gammalaplace$ is $\gamma < \sfrac{1}{3}$. We construct infinitely many solutions, starting from the same initial datum, which belong to  $C_{x,t}^{\sfrac{1}{3}-}$ and strictly dissipate their energy in small time intervals. The proof exploits the "convex integration scheme" introduced by C. De Lellis and  L. Sz\'ekelyhidi for the incompressible Euler equations, joining these ideas with new stability estimates for a class of non-local advection-diffusion equations and a local (in time) well-posedness result for the fractional Navier-Stokes system. Moreover we show the existence of dissipative H\"older continuous solutions of Euler equations that can be obtained as a vanishing viscosity limit of Leray-Hopf weak solutions of a suitable fractional Navier-Stokes equations.
\end{abstract}

\maketitle


\section{Introduction}

In this paper we consider the Cauchy problem for the incompressible fractional Navier-Stokes equations 
\begin{equation}\label{NSgamma}
\left\{\begin{array}{l}
\partial_t v+ v\cdot\nabla v +\nabla p +\gammalaplace v=0\\ \\
\div v = 0 \\ \\
v(\cdot,0)= \overline{v},
\end{array}\right.
\end{equation}
in the spatial periodic setting $\T^3=\R^3\setminus \Z^3$,
where $v$ is a vector field representing the velocity of the fluid, $p$ is the hydrodynamic pressure, $\overline{v}:\T^3\rightarrow \R^3$ is any given solenoidal initial data and $\gamma \in (0,\sfrac{1}{3})$. The operator $\gammalaplace$ is the (non-local) diffusive operator, whose Fourier series is given by
\[
\gammalaplace v (x) := \sum_{k \in \Z^3} |k|^{2\gamma} \hat{v}_k e^{ik\cdot x}. 
\]
We are interested in Leray-Hopf weak solutions of \eqref{NSgamma}, namely solutions $v \in L^\infty(\R^+,L^2(\T^3))\cap L^2(\R^+,H^\gamma(\T^3)) $ satisfying \eqref{NSgamma} in the distributional sense, namely such that 

\[
\int_0^\infty \int_{\T^3} [(v \cdot \partial_t \varphi  - v \cdot \gammalaplace \varphi  +  v\otimes v:D \varphi ] (x,s) \, dx\, ds
= - \int_{\T^3} \overline{v} (x)\cdot \varphi (x,0)\, dx\,,
\]
for every smooth test vector field $\varphi \in C^\infty_c (\T^3\times \R, \R^3)$ with $\div \varphi = 0$ (note that $p$ can be recovered uniquely as a distribution if we impose that $\int p \, dx = 0$), and obeying to the global energy inequality
\begin{equation}\label{energyineq}
\frac{1}{2} \int_{\T^3} |v|^2 (x,t)\, dx + \int_s^t\int_{\T^3} |(-\Delta)^{\sfrac{\gamma}{2}} v|^2 (x, \tau)\, dx\, d\tau \leq \frac{1}{2} \int_{\T^3} |v|^2 (x,s)\, dx\, , \qquad \forall\, 0\leq s< t\,.
\end{equation}
As for the Navier-Stokes equations (i.e. the case $\gamma=1$), it is known that such solutions exist. Indeed we have (for the proof, see Theorem 1.1 in \cite{CDLDR2017})

\begin{theorem}\label{lerayexist}
For any $\overline v \in L^2 (\T^3)$ with $\div \overline v=0$ and every $\gamma \in ]0,1[$ there exists a Leray-Hopf weak solution of \eqref{NSgamma}.
\end{theorem}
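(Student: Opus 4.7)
The plan is to adapt Leray's classical Galerkin approximation scheme to the fractional diffusion setting. Let $V_N$ denote the finite-dimensional space of divergence-free trigonometric polynomials on $\T^3$ of frequency at most $N$, with mean zero, and let $P_N$ be the $L^2$-orthogonal (Leray-Helmholtz) projection onto $V_N$. I would consider the Galerkin ODE
\begin{equation*}
\partial_t v_N + P_N (v_N\cdot \nabla v_N) + \gammalaplace v_N = 0\,,\qquad v_N(0)=P_N\overline v\,,
\end{equation*}
which is a finite-dimensional ODE on $V_N$ (note that $\gammalaplace$, being a Fourier multiplier, commutes with $P_N$ and preserves $V_N$). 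Testing with $v_N$ and using that $\int_{\T^3} v_N\cdot (v_N\cdot\nabla)v_N\, dx = 0$ by incompressibility gives the energy equality
\begin{equation*}
\tfrac{1}{2}\|v_N(t)\|_{L^2}^2 + \int_0^t\|(-\Delta)^{\sfrac{\gamma}{2}}v_N\|_{L^2}^2\, ds = \tfrac{1}{2}\|P_N\overline v\|_{L^2}^2\,,
\end{equation*}
which yields global existence and uniform bounds in $L^\infty_t L^2_x\cap L^2_t H^\gamma_x$.

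The next step is to upgrade weak to strong convergence via Aubin-Lions. Sobolev embedding $H^\gamma\hookrightarrow L^{\sfrac{6}{3-2\gamma}}(\T^3)$ together with interpolation against $L^\infty_t L^2_x$ bounds $v_N\otimes v_N$ in $L^p_t L^q_x$ for some $p>1$, $q>1$ depending on $\gamma$, hence $P_N \div(v_N\otimes v_N)$ is uniformly controlled in $L^p_t H^{-\sigma}_x$ for a suitable $\sigma>0$. Combined with the uniform bound on $\gammalaplace v_N$ in $L^2_t H^{-\gamma}_x$, the equation gives a uniform estimate on $\partial_t v_N$ in $L^p_t H^{-\sigma'}_x$. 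Using the compact embedding $H^\gamma\hookrightarrow\hookrightarrow L^2\hookrightarrow H^{-\sigma'}$, Aubin-Lions produces a subsequence (still denoted $v_N$) converging to some $v$ strongly in $L^2_{\text{loc},t}L^2_x$ and weakly in $L^2_t H^\gamma_x$.

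With strong $L^2_{t,x}$ convergence I can pass to the limit in the distributional formulation: the linear terms converge by weak convergence, while $v_N\otimes v_N\to v\otimes v$ in $L^1_{\text{loc},t}L^1_x$ against any smooth divergence-free test field $\varphi$, since $v_N\otimes v_N = v_N\otimes (v_N-v) + (v_N-v)\otimes v + v\otimes v$ and the first two terms vanish in the limit by Cauchy-Schwarz with the uniform $L^2_tL^{\sfrac{6}{3-2\gamma}}_x$ bound. This produces a weak solution in the stated class. For the global energy inequality \eqref{energyineq}, I would apply weak lower semi-continuity of the $L^2$ norm in $x$ (at a.e.\ $t$, then for all $t$ by redefining $v$ on a null set using the representation $v\in C_w([0,\infty);L^2)$) and lower semi-continuity of the dissipation in $L^2_{t,x}$, combined with the Galerkin energy identity; this argument needs to be carried out starting from $s>0$ on a full-measure set of initial times and then extended to $s=0$ using the strong $L^2$-convergence $P_N\overline v\to \overline v$.

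The main technical obstacle is the compactness step in the regime where $\gamma$ is close to $0$: the available $H^\gamma$-control is so weak that tracking the Sobolev exponents and the interpolation to produce a summable-in-time bound on $v_N\otimes v_N$ (and thus on $\partial_t v_N$ in a negative Sobolev space) must be done carefully. A secondary subtlety is the derivation of \eqref{energyineq} for \emph{every} pair $0\le s<t$ rather than only almost every $s$; this is standard but requires the weak continuity in $L^2$ and the lower semi-continuity argument outlined above.
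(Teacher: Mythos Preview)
The paper does not prove this statement; immediately after stating it the author writes ``for the proof, see Theorem~1.1 in \cite{CDLDR2017}'', so there is no in-paper argument to compare against. Your Galerkin--Leray scheme is the standard route and is essentially what is carried out in that reference, so the proposal is correct.

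One minor remark on your stated worry about small $\gamma$: it is somewhat overstated. The only place the $H^\gamma$ gain is genuinely needed for compactness is the embedding $H^\gamma\hookrightarrow\hookrightarrow L^2$, which holds for every $\gamma>0$. For the time-derivative bound you can bypass the $L^p_tL^q_x$ bookkeeping entirely: from $v_N\in L^\infty_tL^2_x$ one has $v_N\otimes v_N\in L^\infty_tL^1_x\hookrightarrow L^\infty_tH^{-s}_x$ for any $s>\tfrac32$, so $P_N\div(v_N\otimes v_N)$ is uniformly bounded in $L^\infty_tH^{-s-1}_x$ (here it is important that you land in an $H^{-\sigma}$ space, where the sharp Fourier truncation $P_N$ is uniformly bounded, rather than in $L^q$ with $q\neq2$, where it is not). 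This already suffices for Aubin--Lions--Simon, which only needs an $L^1_t$ bound on $\partial_t v_N$. The remaining point about upgrading \eqref{energyineq} from a.e.\ $s$ to every $s\geq0$ via $v\in C_w([0,\infty);L^2)$ is indeed routine.
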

It is also known that, if the power $\gamma$ of the Laplacian is suitably small, then these solutions are not unique. Indeed in \cite{CDLDR2017} the authors proved the ill-posedness in the case $\gamma <\sfrac{1}{5}$. The question about uniqueness is still open if $\gamma\geq \sfrac{1}{5}$. In this work we partially answer this question, proving the non-uniqueness of such solutions in the range $0<\gamma <\sfrac{1}{3}$. More precisely the main result of this paper is the following 

\begin{theorem}\label{t:locale}
Let $\gamma < \sfrac{1}{3}$. Then there are initial data $\overline v\in L^2 (\T^3)$ with $\div \overline v=0$ for which there exist infinitely many Leray solutions $v$ of \eqref{NSgamma} in $[0,+\infty)\times \T^3$. More precicely, if $\gamma <\beta< \sfrac{1}{3}$, there are initial data $\overline v\in C^\beta (\T^3)$ with $\div \overline v =0$ and a positive time $T$ such that
\begin{itemize}
\item[(a)] there are infinitely many Leray-Hopf solutions of \eqref{NSgamma} and moreover $v \in C^\beta (\T^3 \times [0,T])$;
\item[(b)] such solutions strictly dissipate the total energy in $[0,T]$, i.e. the function (of time only)
\begin{equation}\label{etot}
e_{tot}(t):=\frac{1}{2} \int_{\T^3} |v|^2 (x,t)\, dx + \int_0^t\int_{\T^3} |(-\Delta)^{\sfrac{\gamma}{2}} v|^2 (x, \tau)\, dx\, d\tau
\end{equation}
is strictly decreasing in $[0,T]$.
\end{itemize}
\end{theorem}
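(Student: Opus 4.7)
My proof plan combines a local-in-time well-posedness theory for \eqref{NSgamma} in $C^\beta$ (from which I extract a single reference solution $\tilde v\in C^\beta(\T^3\times[0,T])$ with $\div \tilde v=0$) with a convex integration iteration in the spirit of De Lellis--Sz\'ekelyhidi, adapted so that the non-local operator $\gammalaplace$ can be treated as a subprincipal perturbation. The goal is, starting from a well-chosen initial datum $\overline v\in C^\beta$, to construct a one-parameter family of further Leray--Hopf solutions of \eqref{NSgamma} whose total energy $e_{tot}$ can be prescribed to be any strictly decreasing smooth function that agrees with the energy of $\tilde v$ at $t=0$ and is strictly smaller on $(0,T]$. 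The freedom in this prescription delivers both the infinite non-uniqueness in (a) and the strict dissipation in (b) at once.

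The technical core is an inductive construction of smooth triples $(v_q,p_q,\mathring R_q)$ satisfying the fractional Navier--Stokes--Reynolds system
\[
\partial_t v_q+\div(v_q\otimes v_q)+\nabla p_q+\gammalaplace v_q=\div \mathring R_q,\qquad \div v_q=0,
\]
on $\T^3\times[0,T]$, with frequency parameter $\lambda_q=\lceil a^{b^q}\rceil$ and size parameter $\delta_q=\lambda_q^{-2\beta}$, together with amplitude bounds $\|v_{q+1}-v_q\|_0\lesssim\delta_{q+1}^{\sfrac{1}{2}}$ and $\|\mathring R_q\|_0\lesssim\delta_{q+1}$, plus the usual estimates on higher derivatives. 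Each perturbation $w_{q+1}:=v_{q+1}-v_q$ is built from Mikado-type flows oscillating at frequency $\lambda_{q+1}$ with amplitude $\delta_{q+1}^{\sfrac{1}{2}}$, conjugated by the flow of $v_q$, so that the low-frequency component of $w_{q+1}\otimes w_{q+1}$ cancels $\mathring R_q$ modulo an error of order $\delta_{q+2}$. Time cut-offs ensure that $w_{q+1}$ vanishes near $t=0$, so the initial datum is preserved along the iteration and matches $\tilde v(\cdot,0)=\overline v$ in the limit.

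Two ingredients are genuinely new relative to the Euler case. The first is a stability/commutator estimate for the non-local advection--diffusion equation $\partial_t f+v_q\cdot\nabla f+\gammalaplace f=g$, needed to show that the flow of $v_q$ almost preserves the sharp frequency localization of $w_{q+1}$ up to quantitatively small defects; this takes the place of the purely transport-based estimates in the Euler setting. The second, and principal obstacle, is the new Reynolds stress generated by $\gammalaplace w_{q+1}$: an inverse-divergence operator produces a term of size $\delta_{q+1}^{\sfrac{1}{2}}\lambda_{q+1}^{2\gamma-1}$, and the requirement that this be absorbed by $\delta_{q+2}\sim\lambda_{q+1}^{-2\beta b}$, combined with the usual oscillation and transport constraints, closes only in the regime $\gamma<\beta<\sfrac{1}{3}$. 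This is the reason the Onsager-type threshold $\sfrac{1}{3}$ appears sharply.

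Once the scheme closes I obtain $v_q\to v$ in $C^\beta(\T^3\times[0,T])$ with $\mathring R_q\to 0$, so $v$ is a distributional solution of \eqref{NSgamma}. The $C^\beta$ regularity yields $v\in L^\infty_tL^2_x\cap L^2_tH^\gamma_x$ and an energy identity, making the Leray--Hopf inequality \eqref{energyineq} automatic. Varying the prescribed energy profile delivers infinitely many distinct such solutions, each strictly dissipating $e_{tot}$ on $[0,T]$ and sharing the initial datum with $\tilde v$, which completes both (a) and (b).
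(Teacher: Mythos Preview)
Your outline contains a genuine internal contradiction that signals a missing idea. In the last paragraph you assert that ``the $C^\beta$ regularity yields $\ldots$ an energy identity, making the Leray--Hopf inequality automatic.'' But $C^\beta$ with $\beta<\sfrac{1}{3}$ does \emph{not} imply an energy identity --- this is exactly the sub-Onsager regime where anomalous dissipation is possible and is the whole point of the convex-integration constructions you invoke. Worse, if an energy identity did hold, then $e_{tot}$ would be \emph{constant}, directly contradicting (b). Relatedly, you say you will ``prescribe $e_{tot}$ to be any strictly decreasing smooth function,'' but the iteration only lets you prescribe the \emph{kinetic} energy $\int_{\T^3}|v|^2\,dx=e(t)$; the viscous term $\int_0^t\!\!\int|(-\Delta)^{\gamma/2}v|^2$ is an output of the construction, controlled by $\|v\|_{\gamma+\varepsilon}^2\lesssim\|v\|_\beta^2$, not an input. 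So both the mechanism for (b) and the justification of the Leray--Hopf inequality are absent from your plan.

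What the paper actually does --- and what closes this gap --- is a quantitative balance. The iteration produces solutions with prescribed kinetic profile $e(t)$ and, crucially, the \emph{scale-aware} bound $\|v\|_\beta\le C_\beta K^{4/9}$ where $K=\sup_t|e'(t)|$ (this exponent comes from a time-rescaling of the equations before launching the induction). One then chooses a family of profiles with $e(0)$ fixed and $e'(t)\le -2K+2$ on $[0,T]$, $T\sim K^{-1}$; strict decrease of $e_{tot}$ reduces to $K-1>C_{\beta,\gamma}K^{8/9}$, which holds for $K$ large. Two smaller issues: there is no local well-posedness theory in $C^\beta$ for $\beta<\sfrac13$ handing you a reference solution --- the paper starts the iteration from zero and arranges that $v_{q+1}(\cdot,0)$ depends only on $e(0)$ and $v_q(\cdot,0)$, so all profiles with the same $e(0)$ share the same initial datum; and the passage from $[0,T]$ to $[0,\infty)$ is by concatenating with a Leray--Hopf solution emanating from $v(\cdot,T)$, which your outline omits.
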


The proof of Theorem \ref{t:locale} is achieved by using the "convex integration methods" introduced by C. De Lellis and L. Sz\'ekelyhidi  for the incompressible Euler equations, in particular the costruction used in \cite{BDLSV2017}, where the authors, thanks to the new ideas introduced by P. Isett in \cite{Is2016}, proved the existence of $C_{x,t}^{\sfrac{1}{3}-}$  solutions of Euler equations with prescribed kinetic energy. This methods can be also used to prove the ill-posedness for the distributional solutions of the Navier-Stokes equations (i.e. $\gamma=1$). Indeed, recently, in \cite{BV2017} T. Buckmaster and V. Vicol proved the existence of infinitely many weak solutions of the Navier-Stokes equations with bounded kinetic energy. The solutions constructed in \cite{BV2017} do not even have finite energy dissipation in the sense of $e_{tot}$, thus they are not of Leray-Hopf type.
These iterative methods have already been used to prove ill-posedness results in contexts of fractional powers of the Laplacian. For istance in \cite{BSV2016} they  produce infinitely many solutions of the SQG equation.

In order to use the argument proposed in \cite{BDLSV2017}, we have to construct exact solutions of Eq. \eqref{NSgamma} in small time intervals. The corresponding stability estimates of such solutions, with respect to the initial data, are also needed. To this aim we prove new stability estimates for classical solutions of non-local advection-diffusion equations.

Following \cite{CDLDR2017} we will see that if the exponent $\gamma$ is not too large (in particular $\gamma<\sfrac{1}{3}$), then the methods used in \cite{BDLSV2017} to produce H\"older continuous solutions to the Euler equations with prescribed kinetic energy can be adapted to equations \eqref{NSgamma}. Then we will be able to produce (different) solutions with different kinetic energy profile, let all of them start from the same initial data and keep under control the dissipative part in the definition of $e_{tot}$ (see \eqref{etot}).

As already did in  \cite{CDLDR2017}, also in this case the methods would give us infinitely many weak solutions bounded in $L^\infty(\R^+,L^2(\T^3))$ in the range $\sfrac{1}{3} \leq \gamma <\sfrac{1}{2}$, but (a priori) without any control on $e_{tot}$. Since there will not be a big improvement with respect to  \cite{CDLDR2017}, we are not exploiting the details of the construction in this range.

In order to avoid confusion, for fractional Navier-Stokes equations with some viscosity $\nu>0$ we mean the system
\begin{equation}\label{NSvisc}
\left\{\begin{array}{l}
\partial_t v+ v\cdot\nabla v +\nabla p +\nu \gammalaplace v=0\\ \\
\div v = 0\,.
\end{array}\right.
\end{equation}
When $\nu=0$ they are known as Euler equations. Using the main iterative proposition (Proposition \ref{p:main}) we are able to show the existence of dissipative solutions of Euler which can be obtained as a vanishing viscosity limit of solutions of \eqref{NSvisc}. The main idea is taken from \cite{BV2017} where the authors proved that H\"older continuous solutions of Euler arise as a strong limit in $C_t^0(L^2)$ (as $\nu \rightarrow 0$) of weak solutions of the classical Navier-Stokes equations.

Again by the restriction $\gamma<\sfrac{1}{3}$, we are able to produce a sequence Leray-Hopf weak solutions of \eqref{NSvisc} converging to a  dissipative solution of Euler, as $\nu \rightarrow 0$. More precicely we prove the following
\begin{theorem}\label{vanishvisc}
Let $\beta'<\sfrac{1}{3}$. There exist dissipative solutions $v \in C^{\beta'}([0,T]\times\T^3)$ of Euler such that, if  $ \,0<\gamma < \beta'$, there exists a sequence $\nu_n \rightarrow 0$ and a sequence $v^{(\nu_n)} $ of Leray-Hopf weak solutions of \eqref{NSvisc} such that $v^{(\nu_n)}\rightarrow
v $ strongly in $ C^0([0,T],C^{\beta''}(\T^3))$ for every $\beta''<\beta'$.
\end{theorem}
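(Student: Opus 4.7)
The plan is to parallelize the Euler convex integration with analogous iterations at viscosity $\nu>0$, using Proposition~\ref{p:main}, and then to diagonalize on $\nu$. This is the strategy of \cite{BV2017} for the classical Navier-Stokes equations, adapted to the fractional setting under the restriction $\gamma<\sfrac{1}{3}$.

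First I would apply Proposition~\ref{p:main} with $\nu=0$ to construct a dissipative H\"older Euler solution $v\in C^{\beta'}([0,T]\times\T^3)$, prescribing a strictly decreasing kinetic energy profile along the iteration. This is the Euler analogue of Theorem~\ref{t:locale} and is essentially the construction of \cite{BDLSV2017}. Then, for each small $\nu>0$, I would run Proposition~\ref{p:main} for \eqref{NSvisc} \emph{starting from the same initial iterate} and using \emph{the same perturbation recipe} at each stage as in the Euler construction. The only discrepancy between the two iterations is the additional viscous source $\nu\gammalaplace v_q^{(\nu)}$, which is absorbed into the new Reynolds stress via the inverse-divergence operator. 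Because $\gamma<\sfrac{1}{3}$ and each iterate lives at frequency $\lambda_q$, the viscous contribution obeys the heuristic bound $\norm{\nu\gammalaplace v_q^{(\nu)}}_0\lesssim \nu\lambda_q^{2\gamma}\norm{v_q^{(\nu)}}_0$, which is subordinate to the main Reynolds stress for $\nu$ small. Hence the iteration closes and produces $v^{(\nu)}\in C^{\beta'}([0,T]\times\T^3)$ solving \eqref{NSvisc} on $[0,T]$.

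Next I would compare the Euler and viscous iterates stage by stage, using the stability estimates for the non-local advection-diffusion equation alluded to in the introduction. This should yield an estimate of the form $\norm{v_q^{(\nu)}-v_q^{(0)}}_{C^0(C^{\beta''})}\le C(q)\,\nu$. A diagonal argument then selects a sequence $\nu_n\to 0$ along which $\norm{v^{(\nu_n)}-v}_{C^0(C^{\beta''})}\to 0$. Finally I would extend each $v^{(\nu_n)}$ from $[0,T]$ to $[0,+\infty)$ via Theorem~\ref{lerayexist} (taking $v^{(\nu_n)}(\cdot,T)$ as new initial datum), obtaining global Leray-Hopf weak solutions whose restriction to $[0,T]$ retains the required strong convergence.

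The principal obstacle is the tension between the growth of the stage-dependent constant $C(q)$ and the smallness of $\nu$: the stability constants accumulate along the iteration while the viscous term must remain negligible at every stage. The fractional restriction $\gamma<\beta'$ provides the margin for this balance, since at stage $q$ the viscous contribution scales like $\nu\lambda_q^{2\gamma}$ while the iteration tolerates perturbations of size $\lambda_q^{-\beta'}$, so $\nu\ll\lambda_q^{-\beta'-2\gamma}$ suffices. Making this quantitative --- and in particular transferring the stability estimates from classical smooth solutions of the non-local advection-diffusion equation to the convex integration iterates, which are only H\"older --- is the technical crux of the argument.
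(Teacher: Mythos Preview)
Your strategy differs substantially from the paper's, and the paper's route is both simpler and sidesteps precisely the obstacle you flag at the end.

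The paper does \emph{not} run two parallel iterations from step $0$ and compare them. Instead it takes the \emph{final} dissipative Euler solution $v\in C^{\beta'}$ (imported from \cite{BDLSV2017}), mollifies it in space--time at scale $\delta_n$ to obtain a smooth $v_n$, and sets $\nu_n:=\delta_n^{1+\beta'}$. Since $v$ solves Euler, $v_n$ satisfies the Navier--Stokes--Reynolds system \eqref{NSR} at viscosity $\nu_n$ with stress
\[
\mathring R_n=v_n\mathring\otimes v_n-(v\mathring\otimes v)_n+\nu_n\,\mathcal R(-\Delta)^\gamma v_n,
\]
and both the commutator and the viscous term are $\lesssim\delta_n^{2\beta'}$. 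Choosing $\gamma<\beta<\beta'$ and the parameters so that $(v_n,\mathring R_n)$ meets the inductive hypotheses \eqref{e:R_q_inductive_est}--\eqref{e:energy_inductive_assumption} at step $q=n$, one then applies Proposition~\ref{p:main} for $q\ge n$ to produce $v^{(\nu_n)}$. Convergence is immediate from \eqref{e:v_diff_prop_est}:
\[
\|v^{(\nu_n)}-v_n\|_{\beta''}\le\sum_{q\ge n}\|v_{q+1}-v_q\|_{\beta''}\lesssim\sum_{q\ge n}\lambda_{q+1}^{\beta''-\beta}\longrightarrow0,
\]
and $\|v_n-v\|_{\beta''}\to0$ by standard mollification. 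The Leray--Hopf property follows exactly as in the proof of Theorem~\ref{t:locale}, using $\gamma<\beta$.

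This avoids your accumulation problem entirely: there is no stage-by-stage comparison, only the tail of a \emph{single} iteration starting at step $n$, whose total size is controlled uniformly in $\nu_n$ by the iteration estimates themselves. Your proposal, by contrast, requires tracking $v_q^{(\nu)}-v_q^{(0)}$ through every stage. Note moreover that your claim ``the only discrepancy between the two iterations is the additional viscous source'' is not accurate as stated: the gluing step of Proposition~\ref{p:main} solves \emph{exact} fractional Navier--Stokes equations (see \eqref{e:vi:def}), so the glued fields $\bar v_q^{(\nu)}$ and $\bar v_q^{(0)}$ are solutions of genuinely different PDEs, and the subsequent perturbation---which depends on $\bar v_q$ through $\Phi_i$, $\rho_{q,i}$, $\tilde R_{q,i}$---would differ as well. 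Making your comparison quantitative would therefore require substantially more than the stability estimates of Section~3; the paper's ``start the iteration late, at the mollified limit'' trick is the idea that replaces all of this.
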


Also in this case, if we only want to require that the sequence $v^{(\nu_n)}$ is just a sequence of weak solution of Eq. \eqref{NSvisc}, bounded in $L^\infty([0,T],L^2(\T^3))$, we could also prove that for any $\gamma<\sfrac{1}{2}$ there exists a sequence of solutions of \eqref{NSvisc} converging to any H\"older solution of Euler, as $\nu \rightarrow 0$, but in order to be consistent with the arguments of this work, we will not enter in this details.

\section{Proof of Theorem \ref{t:locale}} 

In order to show Theorem \ref{t:locale} we will prove a slightly more general result about Eq. \eqref{NSgamma}. Indeed, using the inductive scheme proposed in \cite{BDLSV2017}, we are able to prove the following 
\begin{theorem}\label{assegnocinetica}
Let $e:[0,1] \rightarrow \R^+$ with the following properties
\begin{itemize}
\item[$(i)$] $\sfrac{1}{2} \leq e(t) \leq 1, \, \forall t \in[0,1]$;
\item[$(ii)$]$ sup_t \, |e'(t)|\leq K, \, $ for some $K>1$.
\end{itemize}
Then for all $\gamma<\beta<\sfrac{1}{3}$ there exists a couple $(v,p)$, solving 
\begin{equation}\label{NSgamma_senzadato}
\left\{\begin{array}{l}
\partial_t v+ v\cdot\nabla v +\nabla p +\gammalaplace v=0\\ \\
\div v = 0 
\end{array}\right.
\end{equation}
in the sense of distributions, such that $ v \in C^\beta (\T^3 \times [0,1])$ and \footnote{Here $\| \cdot\|_\beta$ denotes the H\"older norm, see next section for precise definition}
\begin{align}
&e(t)=\int_{\T^3} |v|^2(x,t)\, dx \, , \label{condizione1}  \\ 
&  \|v\|_\beta\leq C_\beta K^{\sfrac{4}{9}}\, ,\label{condizione2}
\end{align}
 where $C_\beta$ is a constant depending only on $\beta$. Moreover, given any two energy profiles $e_1$ and $e_2$ such that $e_1(0)=e_2(0)$, then the two corresponding solutions $v^{(1)}$ and $v^{(2)}$ start from the same initial data, i.e. $v^{(1)}(\cdot,0)\equiv v^{(2)}(\cdot,0)$.
\end{theorem}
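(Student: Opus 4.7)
The plan is to prove Theorem \ref{assegnocinetica} by adapting the convex integration scheme of \cite{BDLSV2017} to the fractional diffusive setting. Concretely, one constructs inductively a sequence $(v_q, p_q, \mathring R_q)$ of smooth solutions to the \emph{fractional Navier-Stokes-Reynolds system}
\begin{equation*}
\partial_t v_q + \div(v_q\otimes v_q) + \nabla p_q + (-\Delta)^\gamma v_q \;=\; \div \mathring R_q, \qquad \div v_q = 0,
\end{equation*}
indexed by parameters $\lambda_q = a^{b^q}$ and $\delta_q = \lambda_q^{-2\beta'}$ with $\beta < \beta' < \gamma < \sfrac13$, and satisfying the inductive bounds $\|v_q\|_1 \lesssim \delta_q^{1/2}\lambda_q$, $\|\mathring R_q\|_0 \lesssim \delta_{q+1}\lambda_q^{-3\alpha}$ (for a small technical gain $\alpha>0$), and $\bigl|e(t)-\int_{\T^3}|v_q|^2\,dx\bigr|\lesssim \delta_{q+1}$. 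The target $v$ is the $C^\beta$ limit of $\{v_q\}$.

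The inductive step is the heart of the argument. One first mollifies $(v_q, \mathring R_q)$ at a small scale $\ell$ to obtain $(v_\ell, \mathring R_\ell)$, and then evolves $\mathring R_\ell$ along a non-local advection-diffusion flow generated by $v_\ell$ and $(-\Delta)^\gamma$; controlling this evolution is one of the main technical novelties and requires the new stability estimates for non-local advection-diffusion equations announced in the abstract. One then sets $v_{q+1} = v_\ell + w_{q+1}$, where $w_{q+1}$ is a sum of Mikado building blocks as in \cite{BDLSV2017}, concentrated at frequency $\lambda_{q+1}$, with amplitudes $a_\xi$ depending on $\mathring R_\ell$ and on a scalar correction $\rho(t) \sim \tfrac13\bigl(e(t)-\int|v_\ell|^2\bigr)$ which absorbs the energy gap. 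The new Reynolds stress $\mathring R_{q+1}$ is produced by inverting the divergence on the usual error terms (oscillation, transport/Nash, compressibility, mollification, and the correction error from $\rho$) plus one new \emph{dissipation error} $\mathcal R\bigl((-\Delta)^\gamma w_{q+1}\bigr)$ not present in the Euler construction.

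The main obstacle is estimating this dissipation error. Since $w_{q+1}$ is concentrated at frequency $\lambda_{q+1}$, the operator $(-\Delta)^\gamma$ costs $\lambda_{q+1}^{2\gamma}$, producing a contribution of order $\delta_{q+1}^{1/2}\lambda_{q+1}^{2\gamma-1}$ in $C^0$ after inverse divergence. Requiring this to be much smaller than the target $\delta_{q+2}$ gives a constraint linking $b$, $\beta'$ and $\gamma$ which, combined with the Euler-type constraints of \cite{BDLSV2017} governing the oscillation and transport errors, is simultaneously satisfiable for a suitable $b$ slightly larger than $1$ exactly in the range $\beta<\beta'<\gamma<\sfrac13$. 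This is the mechanism by which the threshold of the theorem arises, and it matches the $C^{1/3-}$ Onsager-critical regularity for the limit equation. All other error terms are treated essentially as in \cite{BDLSV2017}, the transport/Nash error modulo the non-local modification of the flow mentioned above.

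Once the iteration is set up, $\{v_q\}$ is Cauchy in $C^\beta$ by interpolating between $\|v_{q+1}-v_q\|_0\lesssim \delta_{q+1}^{1/2}$ and $\|v_{q+1}-v_q\|_1\lesssim \delta_{q+1}^{1/2}\lambda_{q+1}$; the limit solves \eqref{NSgamma_senzadato} in the sense of distributions and realizes \eqref{condizione1} by passing the energy gap to the limit. The $K$-dependence is tracked by choosing a base field $v_0$ carrying kinetic energy $e(t)$ with $\|v_0\|_0\lesssim 1$ and $\|v_0\|_1\lesssim K^{1/2}\lambda_0$, and then tuning $\lambda_0=\lambda_0(K)$ optimally to balance the initial size against the geometric growth of the perturbations; this optimization yields the precise exponent $\sfrac49$ in \eqref{condizione2}. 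Finally, to guarantee that two energy profiles $e_1, e_2$ with $e_1(0)=e_2(0)$ give rise to solutions with the same initial datum, one selects $v_0=v_0^{(e(0))}$ depending only on $e(0)$ and inserts in each amplitude $a_\xi$ a temporal cutoff vanishing in a small neighbourhood of $t=0$ (shrinking together with $\delta_{q+1}$). Since $e_1(0)=e_2(0)$ forces both iterations to start from the same $v_0$ and all subsequent perturbations to vanish at $t=0$, the two limiting solutions share the initial value $v^{(1)}(\cdot,0)\equiv v^{(2)}(\cdot,0)$.
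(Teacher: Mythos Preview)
Your overall strategy---convex integration in the style of \cite{BDLSV2017} with an additional dissipation error $\mathcal R\bigl((-\Delta)^\gamma w_{q+1}\bigr)$ of size $\delta_{q+1}^{1/2}\lambda_{q+1}^{2\gamma-1}$---matches the paper, and your identification of the $\gamma<\sfrac13$ threshold as the point where this error closes is correct. However, three implementation points differ from the paper, and two of them would need repair before your outline becomes a proof.

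First, your description of the gluing step is inaccurate. One does \emph{not} evolve $\mathring R_\ell$ along a non-local advection--diffusion flow. Instead, as in \cite{BDLSV2017}, one solves the \emph{exact} fractional Navier--Stokes equations locally in time on overlapping intervals, starting from the mollified data $v_\ell(\cdot,t_i)$, and then patches the resulting exact solutions $v_i$ with a partition of unity; the Reynolds stress of the glued velocity $\overline v_q$ is then supported on the transition intervals. The new stability estimates for non-local advection--diffusion are used precisely here, to control the differences $v_i-v_\ell$ (and their vector potentials), not to evolve the stress. This requires a local well-posedness result for smooth solutions of the fractional equations (Proposition~\ref{p:local:Euler} in the paper), which your outline omits.

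Second, the paper does not track $K$ by choosing a nontrivial $v_0$ and tuning $\lambda_0$. It instead performs the time rescaling $\tilde v(x,t)=\mu\,v(x,\mu t)$ with $\mu=\delta_1^{1/2}$, which reduces the problem to an energy profile $\tilde e$ with $|\tilde e'|\leq 1$ and a \emph{small} viscosity $\mu$ in front of $(-\Delta)^\gamma$; one then runs the iteration from the \emph{trivial} start $(v_0,p_0,\mathring R_0)=(0,0,0)$. The bound $\|\tilde v\|_{\beta'}\leq 1$ for the rescaled limit, undone by $\mu^{-1}=\lambda_1^{\beta}\sim a^{b\beta}$ with the choice $a\sim K^{1/(3\beta)}$ and the constraint $b<\sfrac43$ from \eqref{e:b_beta_rel}, is exactly what produces $\|v\|_\beta\lesssim K^{b/3}\leq K^{4/9}$. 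Your proposed route through a base flow with $\|v_0\|_1\lesssim K^{1/2}\lambda_0$ is not what the paper does and, as stated, does not transparently yield the exponent $\sfrac49$.

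Third, the matching of initial data is handled differently. The paper does not insert temporal cutoffs vanishing near $t=0$; rather, the main inductive Proposition~\ref{p:main} is proved so that $v_{q+1}(\cdot,0)$ depends only on $e(0)$ and $v_q(\cdot,0)$ (the perturbation amplitudes at $t=0$ involve $e$ only through $\rho_q(0)$, which is determined by $e(0)$). Since the iteration starts from $v_0\equiv 0$ regardless of $e$, induction immediately gives $v^{(1)}(\cdot,0)=v^{(2)}(\cdot,0)$ whenever $e_1(0)=e_2(0)$. Your cutoff approach could be made to work, but would require a separate argument to close the energy identity \eqref{condizione1} at and near $t=0$.
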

We end this section proving Theorem \ref{t:locale}, then the rest of the paper will be devoted to the proof of Theorem \ref{assegnocinetica}.

\begin{proof}[Proof of Theorem \ref{t:locale}]
Elementary arguments produce for every $K>1$ an  infinite set $\mathscr{E}_K$ of smooth functions $e: [0,1]\to \mathbb R$ with the following properties:
\begin{itemize}
\item[$(i)$] $\sfrac{1}{2} \leq e(t) \leq 1\, , \, $  $\forall t\in[0,1]$;
\item[$(ii)$] $\|e\|_{C^1([0,1])} \leq 2K+2$;
\item[$(iii)$] $e (0) =1$ ;
\item[$(iv)$] $e' (t) \leq - 2K +2\, , $ $\, \forall t \in [0, \frac{1}{4K}]$;
\item[$(v)$] for any pair of distinct elements of $\mathscr{E}_K$ there is a sequence of times converging to
$0$ where they take different values.
\end{itemize}
For each $e \in \mathscr{E}_K$, we now use Theorem \ref{assegnocinetica} to produce infinitely many weak solutions satisfying
\begin{itemize}
\item[$(a)$] $e(t)=\frac{1}{2}\int_{\T^3} |v|^2(x,t)\, dx$ ;
\item[$(b)$] $v \in C^\beta (\T^3 \times [0,1]) , \quad \forall \beta <\sfrac{1}{3}$ ;
\item[$(c)$] $v(\cdot, 0)= \overline{v}$, for some $\overline{v}\in C^{\beta}(\T^3)$ ;
\item[$(d)$] $ \|v\|_\beta\leq  C_\beta K^{\sfrac{4}{9}}$.
\end{itemize}
Let $T=\sfrac{1}{4K}$. We have to show that all these solutions strictly dissipate the total energy, which is equivalent to
\begin{equation}\label{disenegia}
\frac{1}{2}\bigl( e(s)-e(t)\bigl)>\int_s^t \int_{\T^3} |(-\Delta)^{\sfrac{\gamma}{2}} v|^2 (x, \tau)\, dx\, d\tau ,\quad \forall \, 0\leq s <t \leq T\, .
\end{equation}
By our assumptions on the functions $e(t)$ and using Corollary \ref{vlaplv} we have
\begin{align*}
& \frac{1}{2}\bigl( e(s)-e(t)\bigl)\geq (K-1)(t-s) ,\quad \forall 0\leq s <t \leq T;\\
& \int_s^t \int_{\T^3} |(-\Delta)^{\sfrac{\gamma}{2}} v|^2 (x, \tau)\, dx\, d\tau \leq (t-s)C_\varepsilon \|v\|_{\gamma +\varepsilon}^2.
\end{align*}
Chosing $\varepsilon$ so that $\gamma+\varepsilon = \beta$, we see that \eqref{disenegia} holds if the constant $K$ satisfies
\begin{equation}\label{fine}
K -1> C_{\beta,\gamma} K^{\sfrac{8}{9}}\,,
\end{equation}
where $C_{\beta,\gamma}$ depends only on $\gamma$ and $\beta$, but not on $K$.
It is clear that there exists a $K$ (big enough) such that \eqref{fine} is satisfied. Thus we have proved the existence of infinitely many Leray-Hopf solutions in the interval $[0,T]$ satisfying $(a)$ and $(b)$ of Theorem \ref{t:locale}. Finally, using Theorem \ref{lerayexist}, it is not difficoult to show that all these solutions can be prolonged to Leray-Hopf solutions for every $t\geq 0$, thus the proof is concluded.
\end{proof}

\section{Stability estimates for classical solutions of non-local advection-diffusion equations and classical solutions of the fractional Navier-Stokes equations}

In the following $m=0,1,2,\dots$, $\alpha\in (0,1)$, and $\theta$ is a multi-index. We introduce the usual (spatial) 
H\"older norms as follows.
First of all, the supremum norm is denoted by $\|f\|_0:=\sup_{\T^3\times [0,T]}|f|$. We define the H\"older seminorms 
as
\begin{equation*}
\begin{split}
[f]_{m}&=\max_{|\theta|=m}\|D^{\theta}f\|_0\, ,\\
[f]_{m+\alpha} &= \max_{|\theta|=m}\sup_{x\neq y, t}\frac{|D^{\theta}f(x, t)-D^{\theta}f(y, t)|}{|x-y|^{\alpha}}\, ,
\end{split}
\end{equation*}
where $D^\theta$ are {\em space derivatives} only.
The H\"older norms are then given by
\begin{eqnarray*}
\|f\|_{m}&=&\sum_{j=0}^m[f]_j\\
\|f\|_{m+\alpha}&=&\|f\|_m+[f]_{m+\alpha}.
\end{eqnarray*}
Moreover, we will write $[f (t)]_\alpha$ and $\|f (t)\|_\alpha$ when the time $t$ is fixed and the
norms are computed for the restriction of $f$ to the $t$-time slice.
\subsection{Maximum principle and stability estimates}
We begin by stating a maximum principle result for a non-local operator. The proof is standard, since, as for the local case (i.e. using the Laplacian), we have that $\gammalaplace u(x_0)\geq 0$ whenever $x_0$ is a global maximum point of $u$ (see for instance the integral representation formula given in \cite[Theorem 1.5]{RS2016}).
\begin{theorem}[Maximum principle]\label{MP}
Define $ Q_T:= \T^3 \times (0,T]$. Let $L$ be the pseudo-differential operator defined as $Lu = (v \cdot \nabla ) u+\nu \gammalaplace u $, where $u: \T^3 \times[0,T] \rightarrow \R $ , $ v:\T^3 \times [0,T] \rightarrow \R^3$ is a given vector field and $\nu>0$, $0<\gamma\leq 1$. The following holds:
\begin{itemize}
\item[(i)] if $ u_t +Lu \leq 0 $ in $ Q_T$, then $\, \max_{\overline{Q}_T} u = \max_{\T^3 \times\{0\}} u$;
\item[(ii)]if $ u_t +Lu \geq 0 $ in $ Q_T$, then $\,\min_{\overline{Q}_T} u = \min_{\T^3 \times\{0\}} u$.
\end{itemize}
\end{theorem}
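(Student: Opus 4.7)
The plan is to follow the classical parabolic maximum principle argument, with the only non-trivial ingredient being the positivity of $\gammalaplace u$ at a global spatial maximum point. It suffices to prove (i), since (ii) follows by applying (i) to $-u$. Because the spatial domain $\T^3$ is compact without boundary, the parabolic boundary of $Q_T$ reduces to $\T^3 \times \{0\}$, so one needs to show that if $\max_{\overline Q_T} u$ is attained at some $(x_0,t_0)$ with $t_0>0$, then it is also attained at $t=0$.

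First I would establish the strict version: if $u_t + Lu < 0$ on $Q_T$, then the maximum lies on $\T^3\times\{0\}$. Suppose by contradiction the maximum is attained at an interior-in-time point $(x_0,t_0)$ with $t_0 \in (0,T]$. Then $x_0$ is a global spatial maximum of $u(\cdot,t_0)$, so $\nabla u(x_0,t_0)=0$, hence $(v\cdot\nabla)u(x_0,t_0)=0$. By the integral representation of the fractional Laplacian recalled in \cite[Theorem 1.5]{RS2016}, which on $\T^3$ reads as an integral of $u(x_0,t_0)-u(y,t_0)$ against a positive kernel (the periodization of the Riesz kernel), we obtain $\gammalaplace u(x_0,t_0) \geq 0$. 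Finally $\partial_t u(x_0,t_0) \geq 0$: indeed if $t_0 < T$ we have equality, and if $t_0 = T$ we use the one-sided limit from the left. Adding these three inequalities gives $u_t + Lu \geq 0$ at $(x_0,t_0)$, contradicting the strict hypothesis.

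To remove the strictness assumption in (i), I would apply the standard perturbation trick: for $\varepsilon>0$, set $u_\varepsilon(x,t) := u(x,t) - \varepsilon t$. Then
\begin{equation*}
\partial_t u_\varepsilon + L u_\varepsilon = \partial_t u + L u - \varepsilon < 0 \quad \text{in } Q_T,
\end{equation*}
so by the strict case $\max_{\overline{Q_T}} u_\varepsilon = \max_{\T^3\times\{0\}} u_\varepsilon = \max_{\T^3\times\{0\}} u$. Hence for every $(x,t)\in\overline{Q_T}$,
\begin{equation*}
u(x,t) \leq \varepsilon t + \max_{\T^3\times\{0\}} u \leq \varepsilon T + \max_{\T^3\times\{0\}} u,
\end{equation*}
and letting $\varepsilon \to 0$ yields $\max_{\overline{Q_T}} u = \max_{\T^3\times\{0\}} u$, completing the proof. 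The only potentially delicate point is the evaluation of $\gammalaplace u$ at the maximum, but for classical solutions this is immediate from the pointwise singular-integral formula, so no further regularization is required.
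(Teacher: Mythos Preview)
Your proposal is correct and follows exactly the standard argument the paper alludes to: the paper does not give a detailed proof but simply notes that the result is standard once one knows $\gammalaplace u(x_0)\geq 0$ at a global maximum $x_0$, citing the same integral representation from \cite[Theorem 1.5]{RS2016}. Your write-up just fills in the routine details of that argument.
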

In 
Using Theorem \ref{MP} we can prove a stability estimate for a general class of non-local parabolic equations. Indeed we have 

\begin{proposition}\label{C0stability}
Let $u: \T^3 \times[t_0,T] \rightarrow \R^3 $ be a solution of the Cauchy problem
\begin{equation}\label{nonlocale}
\left\{\begin{array}{l}
u_t+Lu = f \qquad \text{in} \quad \T^3\times(t_0,T)\\ \\
u(\cdot,t_0)=u_0 \qquad \text{in} \quad \T^3.
\end{array}\right.
\end{equation}
Then for any $t \in [t_0,T]$ we have 
\begin{equation}\label{stabilityestimate1}
\|u(t)\|_0 \leq \| u_0\|_0 + \int_{t_0}^t \|f(s)\|_0 \,ds\, ,
\end{equation}
\begin{equation}\label{stabilityestimate2}
[u(t)]_1 \leq [u_0]_1e^{(t-t_0)[v]_1} + \int_{t_0}^te^{(t-s)[v]_1} [f(s)]_1 \,ds \, ,
\end{equation}
and, more generally, for any $N\geq 2$ there exists a constant $C=C_N$ so that 
\begin{align}\label{stabilityestimate3}
[u(t)]_N &\leq \, \bigg( [u_0]_N+C(t-t_0)[v]_N[u_0]_1\bigg) e^{C(t-t_0)[v]_1} \nonumber  \\
 &+ \int_{t_0}^te^{(t-s)[v]_1}\bigg( [f(s)]_N+C(t-s)[v]_N[f(s)]_1 \bigg) \,ds \, .
\end{align}
\end{proposition}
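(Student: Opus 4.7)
The plan is to derive the three estimates in succession, each one feeding into the next, the base ingredient being the maximum principle of Theorem \ref{MP} combined with the fact that both $v\cdot \nabla$ and $\gammalaplace$ annihilate constants. All estimates are performed componentwise on $u$; in what follows I tacitly work with a single scalar component and then take the maximum.

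For \eqref{stabilityestimate1}, I would build a barrier. Set
\[
w^{\pm}(x,t) := \pm u(x,t) - \|u_0\|_0 - \int_{t_0}^{t} \|f(s)\|_0\,ds.
\]
Since $L$ annihilates functions depending only on $t$, the evolution gives $w^{\pm}_t + L w^{\pm} = \pm f(x,t) - \|f(t)\|_0 \leq 0$ on $Q_T$, while $w^{\pm}(\cdot,t_0)\leq 0$. Applying part (i) of Theorem \ref{MP} yields $w^{\pm}\leq 0$, which is exactly \eqref{stabilityestimate1}.

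For \eqref{stabilityestimate2}, I would differentiate \eqref{nonlocale} in a space direction $\partial_i$. Since $[\partial_i, L]u = (\partial_i v)\cdot \nabla u$, the derivative $\partial_i u$ solves
\[
(\partial_i u)_t + L(\partial_i u) \;=\; \partial_i f \;-\; (\partial_i v)\cdot \nabla u,
\]
with initial datum $\partial_i u_0$. The extra term on the right is bounded in $C^0$ by $[v]_1\,[u(s)]_1$, so applying \eqref{stabilityestimate1} to this new Cauchy problem and taking the maximum over $i$ gives
\[
[u(t)]_1 \;\leq\; [u_0]_1 + \int_{t_0}^{t}\!\bigl([f(s)]_1 + [v]_1\,[u(s)]_1\bigr)\,ds,
\]
and Grönwall's inequality produces \eqref{stabilityestimate2}.

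For the higher-order estimate \eqref{stabilityestimate3}, I would iterate this idea. Let $\theta$ be a multi-index with $|\theta|=N$; by Leibniz,
\[
(D^\theta u)_t + L(D^\theta u) \;=\; D^\theta f \;-\; \sum_{0<\alpha\leq \theta}\binom{\theta}{\alpha}(D^\alpha v)\cdot\nabla D^{\theta-\alpha}u .
\]
Estimate \eqref{stabilityestimate1} applied to $D^\theta u$ yields
\[
[u(t)]_N \;\leq\; [u_0]_N + \int_{t_0}^{t}\!\Bigl([f(s)]_N + \sum_{k=1}^{N} C_k\,[v]_k\,[u(s)]_{N-k+1}\Bigr)ds .
\]
The intermediate terms $1<k<N$ are the main obstacle: one absorbs them by the standard convex-interpolation inequality for H\"older semi-norms, which for $1\leq k\leq N$ gives
\[
[v]_k\,[u]_{N-k+1} \;\leq\; C_N\bigl([v]_1\,[u]_N + [v]_N\,[u]_1\bigr),
\]
after applying Young's inequality to the product of the interpolated norms. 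Substituting yields a linear integral inequality of the form
\[
[u(t)]_N \;\leq\; [u_0]_N + \int_{t_0}^{t}\!\bigl([f(s)]_N + C[v]_1\,[u(s)]_N + C[v]_N\,[u(s)]_1\bigr)\,ds ,
\]
with the term $[u(s)]_1$ already controlled by \eqref{stabilityestimate2}. Plugging the latter in and invoking the Grönwall lemma, with integrating factor $e^{C(t-s)[v]_1}$, produces the two pieces in \eqref{stabilityestimate3}: the contribution from the initial datum (carrying the correction $(t-t_0)[v]_N[u_0]_1$ that comes from the $[v]_N[u(s)]_1$ term in the integrand) and the Duhamel contribution (carrying the analogous $(t-s)[v]_N[f(s)]_1$). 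The main subtlety is precisely bookkeeping the interpolation constants so that the only surviving semi-norms of $v$ in the final bound are $[v]_1$ and $[v]_N$.
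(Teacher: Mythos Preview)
Your proof is correct and follows essentially the same route as the paper: a barrier argument plus the maximum principle for \eqref{stabilityestimate1}, differentiation plus Gr\"onwall for \eqref{stabilityestimate2}, and iterated differentiation with interpolation of the intermediate $[v]_k[u]_{N-k+1}$ terms into $[v]_1[u]_N+[v]_N[u]_1$ for \eqref{stabilityestimate3}. The only cosmetic differences are that you combine the upper/lower barriers into a single $w^{\pm}$ and that you spell out the interpolation step more explicitly than the paper, which passes directly to the two-term bound.
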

\begin{proof}
We may assume that $u$ and $f$ are two scalar functions, indeed we can work on each component of equation \eqref{nonlocale}. Note also that Theorem \ref{MP} is invariant under the time shifting $t \mapsto t+t_0 $ .

Defining \[
w:= u- \int_{t_0}^t \|f(s)\|_0 \,ds\, ,\] we have
\[
\left\{\begin{array}{l}
w_t+Lw = f-\|f(t)\|_0 \leq 0 \\ \\
w(\cdot,t_0)=u_0\, .
\end{array}\right.
\]
Thus, by Theorem \ref{MP}, we have 
\begin{equation}\label{ris1}
u(x,t) \leq \| u_0\|_0 + \int_{t_0}^t \|f(s)\|_0\, ds.
\end{equation}
Applying the same argument to the function $\tilde{w}:= u+ \int_{t_0}^t \|f(s)\|_0 \,ds\, $, we get the bound from below, showing \eqref{stabilityestimate1}.

Next, differentiate \eqref{nonlocale} in the $x$ variable to obtain
\[
(Du)_t + L \,Du = Df - DvDu\, .
\]
Applying \eqref{stabilityestimate1} to $Du$ yields
\[
[u(t)]_1 \leq [u_0]_1 +\int_{t_0}^t \bigr([f(s)]_1 + [v]_1[u(s)]_1 \bigr) \, ds \, ,
\]
and by Gronwall's inequality we get \eqref{stabilityestimate2}.
Now, differentiating \eqref{nonlocale} N times yields
\begin{equation}\label{derivata}
(D^N u)_t+L \, D^N u= D^N f + \sum_{k=0}^{N-1} c_{k,N}\, D^{k+1} u\, D^{N-k}v\, .
\end{equation}
Using again \eqref{stabilityestimate1} we can estiamte
\[
[u(t)]_N \leq [u_0]_N +\int_{t_0}^t \bigr([f(s)]_N + C ([v]_N[u(s)]_1+[v]_1[u(s)]_N) \bigr) \, ds \, ,
\]
and plugging the estimate \eqref{stabilityestimate2}, we get
\begin{align*}
[u(t)]_N &\leq [u_0]_N +C(t-t_0)[v]_N[u_0]_1 e^{(t-t_0)[v]_1}+\int_{t_0}^t \Big([f(s)]_N \\
&+ C [v]_N\int_{t_0}^s e^{(s-r)[v]_1}[f(r)]_1\,dr+C[v]_1[u(s)]_N \Big) \, ds \, ,
\end{align*}
and Gronwall's inequality finally leads to \eqref{stabilityestimate3}.
\end{proof}
Using Proposition \ref{C0stability} we also get the following
 
\begin{proposition}\label{p:transport_derivatives1}
Assume $0\leq(t-t_0)[v]_1\leq 1$. Then,  any solution $u$ of \eqref{nonlocale} satisfies 
\begin{equation}\label{e:trans_est_alpha}
\norm{u(t)}_{\alpha} \leq e^{\alpha} \left(\norm{u_0}_{\alpha} + \int_{ t_0}^t  \norm{f (\cdot, \tau)}_{\alpha}\, d\tau\right)\,,
\end{equation}
for all $0\leq \alpha\leq 1$, and, more generally, for any $N\geq 1$ and $0\leq \alpha< 1$
\begin{align}
[u (t)]_{N+\alpha} & \lesssim [ u_0]_{N+\alpha} + (t-t_0)[v]_{N+\alpha}[u_0]_1  +\int_{t_0}^t \Bigl([f (\tau)]_{N+\alpha} + (t-\tau) [v ]_{N+\alpha} [f (\tau)]_{1}\Bigr)\,d\tau\, ,
\label{e:trans_est_1}
\end{align}
where the implicit constant depends only on $N$ and $\alpha$.
\end{proposition}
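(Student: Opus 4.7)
The plan is to derive both fractional estimates by bootstrapping from the integer-order bounds of Proposition \ref{C0stability}, together with interpolation in H\"older spaces.

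For \eqref{e:trans_est_alpha}, the endpoints are immediate: $\alpha=0$ is exactly \eqref{stabilityestimate1}, and $\alpha=1$ follows from \eqref{stabilityestimate1} and \eqref{stabilityestimate2}, since the standing hypothesis $(t-t_0)[v]_1 \le 1$ reduces the exponentials $e^{(t-s)[v]_1}$ to $e$. For the intermediate range $\alpha \in (0,1)$, I would view the solution as the linear operator $T:(u_0,f)\mapsto u(t)$ whose operator norm is bounded by $1$ on $C^0$ and by $e$ on $C^1$; real interpolation between these endpoints then yields the sharp bound $e^\alpha$ on $C^\alpha$. As a more hands-on alternative, one can apply \eqref{stabilityestimate1} to the finite difference $w_h(x,t) := u(x+h,t) - u(x,t)$, which satisfies
\[
(w_h)_t + L w_h \;=\; f_h - (v_h \cdot \nabla) u(\cdot + h, \cdot),
\]
dividing by $|h|^\alpha$, taking the supremum over $|h|$ smaller than the torus diameter, and controlling the commutator term $v_h\cdot\nabla u$ through $\|v_h\|_0 \le |h|[v]_1$ together with the first-order bound \eqref{stabilityestimate2} on $[u(s)]_1$.

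For \eqref{e:trans_est_1}, differentiating the equation $N$ times as in \eqref{derivata} produces
\[
(D^N u)_t + L D^N u \;=\; D^N f + \sum_{k=0}^{N-1} c_{k,N}\, D^{k+1}u \cdot D^{N-k} v.
\]
I would then apply \eqref{e:trans_est_alpha} to $D^N u$ with parameter $\alpha\in[0,1)$ and estimate the $\alpha$-H\"older norm of the forcing via the product rule $[fg]_\alpha \le [f]_\alpha\|g\|_0 + \|f\|_0[g]_\alpha$. The $k=0$ term, $Du \cdot D^N v$, contributes a factor $[u]_1[v]_{N+\alpha}$, which after time integration and insertion of \eqref{stabilityestimate2} for $[u(s)]_1$ and $[f(s)]_1$ yields precisely the $(t-t_0)[v]_{N+\alpha}[u_0]_1$ and $(t-\tau)[v]_{N+\alpha}[f(\tau)]_1$ contributions displayed in \eqref{e:trans_est_1}. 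The $k=N-1$ term, $D^N u \cdot Dv$, contributes $[v]_1[u(s)]_{N+\alpha}$, which is absorbed by a Gronwall argument thanks to $(t-t_0)[v]_1 \le 1$. The mixed terms $1\le k\le N-2$ are controlled by interpolating the integer estimate \eqref{stabilityestimate3} for $[u(t)]_N$ against $\|u(t)\|_0$, and are dominated by the displayed terms up to the implicit constant depending on $N$ and $\alpha$.

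The main obstacle I expect is the bookkeeping in \eqref{e:trans_est_1}: ensuring that, after applying the H\"older product rule to each cross term $D^{k+1}u\cdot D^{N-k}v$ and inserting the lower-order estimates from Proposition \ref{C0stability}, only the two displayed structures (\,$[v]_{N+\alpha}$ multiplied by a first-order norm of $u_0$ or $f$, and $[v]_1$ multiplied by the full-order seminorm $[u]_{N+\alpha}$\,) survive, so that the implicit Gronwall inequality closes under the hypothesis $(t-t_0)[v]_1\le 1$. A minor technicality is that the passage from finite differences $\|w_h\|_0/|h|^\alpha$ to the full $C^\alpha$ seminorm requires $|h|$ to be bounded above; on $\T^3$ this is automatic and introduces no additional constants.
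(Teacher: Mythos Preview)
Your proposal is correct in outline, and for the higher-order estimate \eqref{e:trans_est_1} your strategy (differentiate $N$ times, apply the $\alpha$-estimate to $D^N u$, handle cross terms via the H\"older product rule and interpolation, close with Gronwall) matches the paper's proof exactly.

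For the base estimate \eqref{e:trans_est_alpha}, however, the paper takes a genuinely different route than either of your suggestions. Rather than interpolating between the $C^0$ and $C^1$ bounds, or bounding the commutator $v_h\cdot\nabla u$ through $[u]_1$, the paper treats the translation parameter $h$ as an honest independent variable and observes (following \cite{CTV2015}) that the normalized finite difference $w(x,t;h)=\delta_h u/|h|^\alpha$ satisfies a transport--diffusion equation in the \emph{joint} variables $(x,h)$:
\[
\bigl(\partial_t+\nu(-\Delta_x)^\gamma+v\cdot\nabla_x+\delta_h v\cdot\nabla_h\bigr)w=\alpha\,\frac{\delta_h v}{|h|}\cdot\frac{h}{|h|}\,w+\frac{\delta_h f}{|h|^\alpha}.
\]
The point is that the dangerous term $\delta_h v\cdot\nabla u(\cdot+h)$ is absorbed into the $\nabla_h$-drift, leaving on the right only a zeroth-order term in $w$ with coefficient bounded by $\alpha[v]_1$. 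The maximum principle in $(x,h)$ then gives a \emph{closed} inequality $[u(t)]_\alpha\le[u_0]_\alpha+\int(\alpha[v]_1[u]_\alpha+[f]_\alpha)$, and Gronwall yields the sharp factor $e^{\alpha(t-t_0)[v]_1}\le e^\alpha$ directly. By contrast, your finite-difference alternative routes through $[u(s)]_1$ and hence does not close in $[u]_\alpha$; it works, but the resulting constant depends on the diameter of $\T^3$ rather than being exactly $e^\alpha$. Your interpolation argument is cleaner and does yield the sharp operator bound $1^{1-\alpha}e^\alpha=e^\alpha$, but at the cost of invoking the identification $(C^0,C^1)_{\alpha,\infty}=C^\alpha$, whereas the paper's argument stays entirely within the maximum-principle framework of Proposition~\ref{C0stability}.
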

\begin{proof}
For any $\alpha\in [0,1]$, let 
$$
w(x,t;h):=\frac{\delta_h u(x,t)}{|h|^\alpha}=\frac{u(x+h,t)-u(x,t)}{|h|^\alpha}\,.
$$
We have that this new function $w$ satisfies (see equation (4.13) in \cite{CTV2015})
$$
\Big( \partial_t +\nu(-\Delta)^\gamma+v\cdot \nabla_x+\delta_h v\cdot \nabla_h\Big)w=\alpha \frac{\delta_h v}{|h|}\cdot\frac{h}{|h|} w+\frac{\delta_h f}{|h|^\alpha}\,,
$$
Thus by the maximum principle\footnote{Here the maximum principle is applied in both the variables $x, h$.} \eqref{stabilityestimate1} and since $\sup_{h,x}|w(x,t;h)|=[u(t)]_\alpha$ we get
$$
[u(t)]_\alpha\leq [u_0]_\alpha+\int_{t_0}^t \big( \alpha [v(s)]_1[u(s)]_\alpha+[f(s)]_\alpha\big)\,ds\,,
$$
from which, by Gronwall's inequality, \eqref{e:trans_est_alpha} follows.

To get the higher order bounds \eqref{e:trans_est_1} just differentiate the equation $N $ times as in \eqref{derivata} and apply the previous argument with
$$
w(x,t;h):=\frac{\delta_h D^Nu(x,t)}{|h|^\alpha}=\frac{D^Nu(x+h,t)-D^Nu(x,t)}{|h|^\alpha}\,,
$$
then \eqref{e:trans_est_1} is again a consequence of \eqref{stabilityestimate1} and Gronwall's inequality.
\end{proof}

\subsection{Local existence of smooth solutions}
We want to consider exact (smooth) solutions to the fractional Navier-Stokes equations
\begin{equation}\label{NSgamma_ap}
\left\{\begin{array}{l}
\partial_t v+ v\cdot\nabla v +\nabla p +\nu \gammalaplace v=0\\  \\
\div v = 0 \\ \\
v(\cdot,0)= u_0,
\end{array}\right.
\end{equation}
in the periodic setting $\T^3\times[0,T]$, where $\gamma \in (0,1)$ and $\nu>0$. We define the space
\[ 
V^m:=\{v\in H^m(\T^3)\, : \, div\, v=0 \}\,.
\]

We start with the following 
\begin{theorem}\label{localexist}
For any  $m\geq 3$ there exists a constant $c_m=c(m)$ such that the following holds. Given any initial condition $u_0 \in V^m$ and $T_m:=c_m \|u_0\|_V^{-1}$ there exists a unique solution $v \in C([0,T_m], V^{m}) \cap C^1([0,T_m], V^{m-2})$. Moreover we have the estimate
\begin{equation}\label{estHm}
\|v(t)\|_{V^m} \leq \|u_0\|_{V^m} e^{c_m \int_0^t \|\nabla v(s)\|_0 \, ds} \qquad \forall t \in [0,T_m]\,.
\end{equation}
\end{theorem}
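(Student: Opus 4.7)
The plan is to use a standard Galerkin approximation together with $H^m$ energy estimates, exploiting the fact that the fractional diffusion $\nu\gammalaplace$ has a non-negative Fourier symbol and therefore contributes a favorable sign in the energy identity. First I would apply the Leray projector $P$ onto divergence-free vector fields and rewrite \eqref{NSgamma_ap} as $\partial_t v + P(v\cdot\nabla v) + \nu \gammalaplace v = 0$; note that $P$ commutes with both $\gammalaplace$ and with constant-coefficient differential operators since we work on $\T^3$ and all three are Fourier multipliers. Galerkin truncation to the first $N$ divergence-free Fourier modes produces a sequence $v^N$ of smooth, globally defined approximate solutions.

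The heart of the proof is an a priori estimate in $V^m$ that is uniform in $N$. For each multi-index $\theta$ with $|\theta|\leq m$, applying $D^\theta$ to the equation, testing against $D^\theta v$ and integrating, the diffusive term produces the non-negative contribution $2\nu\|(-\Delta)^{\gamma/2}D^\theta v\|_{L^2}^2$; the transport term $\int v\cdot \nabla D^\theta v\cdot D^\theta v\,dx$ vanishes by integration by parts because $\div v=0$; and the remaining commutator $\int [D^\theta,v\cdot \nabla] v\cdot D^\theta v\,dx$ is controlled by the Kato--Ponce/Moser-type inequality by $c_m\|\nabla v\|_0 \|v\|_{V^m}^2$. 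Summing over $|\theta|\leq m$ and discarding the good diffusive term gives
\begin{equation*}
\frac{d}{dt}\|v\|_{V^m}^2\leq 2 c_m \|\nabla v\|_0 \, \|v\|_{V^m}^2\,,
\end{equation*}
from which Gronwall's inequality yields \eqref{estHm}. The pressure plays no role, since after projecting with $P$ it disappears from the equation.

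To produce the existence time $T_m$, I would use the Sobolev embedding $H^{m-1}(\T^3)\hookrightarrow L^\infty(\T^3)$, valid in three dimensions for $m\geq 3$, to estimate $\|\nabla v\|_0\leq C\|v\|_{V^m}$. Feeding this back into the differential inequality above gives a Riccati-type bound $\frac{d}{dt}\|v\|_{V^m}^2\leq C_m\|v\|_{V^m}^3$, which integrates to keep $\|v^N(t)\|_{V^m}$ bounded by $2\|u_0\|_{V^m}$ on a uniform interval $[0,T_m]$ with $T_m=c_m\|u_0\|_{V^m}^{-1}$. Standard compactness arguments (Banach--Alaoglu in $L^\infty_tV^m$, plus Aubin--Lions using the equation to bound $\partial_t v^N$ in $L^\infty_tV^{m-2}$, since $P(v\cdot\nabla v)\in V^{m-1}$ and $\gammalaplace v\in V^{m-2\gamma}\subset V^{m-2}$) allow passage to the limit and yield a solution $v\in L^\infty([0,T_m],V^m)$ satisfying the a priori estimate.

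For uniqueness, if $v_1,v_2$ are two solutions with the same initial datum, their difference $w=v_1-v_2$ satisfies $\partial_t w + v_1\cdot\nabla w + w\cdot\nabla v_2 + \nabla(p_1-p_2) + \nu\gammalaplace w = 0$, and a standard $L^2$ energy estimate gives $\tfrac{d}{dt}\|w\|_{L^2}^2\leq 2\|\nabla v_2\|_0\|w\|_{L^2}^2$ (the fractional diffusion is again dropped by positivity, and the $v_1$ term vanishes by $\div v_1=0$); since $\|\nabla v_2\|_0$ is integrable in time, Gronwall forces $w\equiv 0$. Finally, for the continuity statement $v\in C([0,T_m],V^m)\cap C^1([0,T_m],V^{m-2})$, the equation directly shows $\partial_t v\in L^\infty([0,T_m],V^{m-2})$, which delivers continuity into $V^{m-2}$; strong continuity in $V^m$ follows from a weak-strong argument combining the weak continuity inherited from $L^\infty_tV^m$ with the fact that $\|v(t)\|_{V^m}$ is upper semi-continuous thanks to \eqref{estHm}. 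The main technical point to watch is the verification of the Kato--Ponce commutator estimate at the Galerkin level with uniform constants; once that is in place, the positivity of $\gammalaplace$ makes the rest of the argument essentially identical to the standard Navier--Stokes local well-posedness theory.
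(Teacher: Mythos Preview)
Your proposal is correct and follows essentially the same approach as the paper: the paper does not give its own proof but simply refers to Majda--Bertozzi \cite{MaBe2002}, Theorem~3.4, noting that the proof there is the ``energy method'' and adapts verbatim to any power $\gamma$ of the Laplacian since the diffusive term contributes with a favorable sign. Your sketch---Galerkin truncation, $H^m$ energy estimate with the Kato--Ponce commutator bound, Gronwall for \eqref{estHm}, Sobolev embedding plus a Riccati inequality for the existence time, and a standard $L^2$ uniqueness argument---is exactly that method.
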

For a proof of Theorem \ref{localexist} we refer to \cite{MaBe2002} (Theorem 3.4 in Chapter 3). Notice that that theorem is stated for the classical Navier-Stokes equations. The proof uses the so called "energy method" and it can be easily adapted to any power $\gamma$ of the Laplacian in the equations \eqref{NSgamma_ap}.

We now want to prove that there exists a maximal time of existence (independent on $m$) of such solution. In particular, if the initial datum is smooth, we get the local existence of a smooth solution of Eq. \eqref{NSgamma_ap}. We also prove some stability estimates of such solution in H\"older spaces, since they will play a crucial role in the iterative construction.

\begin{proposition}
\label{p:local:Euler}
For any $\nu>0$ and any $0<\alpha<1$ there exists a constant $c=c(\alpha)>0$ with the following property. Given any initial data $u_0\in C^{\infty}$, and $T\leq c\norm{u_0}_{1+\alpha}^{-1}$, there exists a unique solution $v:\mathbb R^3 \times [0,T]\rightarrow \mathbb R^3$  of \eqref{NSgamma_ap}.
Moreover, $v$ obeys the bounds
\begin{align}
\norm{v}_{N+\alpha} \lesssim &\norm{u_0}_{N+\alpha}~. \label{e:euler_eq_bd_k}
\end{align}
for all $N\geq 1$, where the implicit constant depends on $N$ and $\alpha>0$.
\end{proposition}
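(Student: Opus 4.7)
The plan is to view \eqref{NSgamma_ap} as a non-local advection--diffusion equation of exactly the form \eqref{nonlocale}, with $v$ playing both the role of the transported quantity and of the advecting field, and with forcing $f=-\nabla p$; I then iterate the H\"older stability estimates of Proposition \ref{p:transport_derivatives1} together with a Calder\'on--Zygmund bound for the pressure. For $u_0\in C^\infty(\T^3)\subset V^m$ for every $m\geq 3$, Theorem \ref{localexist} produces a smooth solution on some (possibly very short) interval $[0,T_m]$, and the exponential bound \eqref{estHm} yields a BKM-type continuation criterion: the solution persists as long as $\int_0^t \|\nabla v(s)\|_0\,ds<\infty$. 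Since $\|\nabla v\|_0\leq \|v\|_{1+\alpha}$, it is therefore enough to derive an a priori $C^{1+\alpha}$-bound on the solution on an interval $[0,T]$ with $T\leq c\|u_0\|_{1+\alpha}^{-1}$.

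Next I would estimate the pressure. Taking the divergence of the momentum equation yields $-\Delta p=\partial_i v^j\,\partial_j v^i$, so $\nabla p$ is given by a zero-order Calder\'on--Zygmund operator applied to the bilinear form $v\otimes v$. On the torus such operators act boundedly on $C^{N+\alpha}$ modulo constants, and combined with the bilinear H\"older product estimate this yields, for every $N\geq 0$,
\[
\|\nabla p\|_{N+\alpha}\lesssim \|v\otimes v\|_{N+\alpha}\lesssim \|v\|_{1+\alpha}\,\|v\|_{N+\alpha}.
\]

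Then I apply \eqref{e:trans_est_1} to $v$ itself with $f=-\nabla p$, first for $N=1$. Provided $t\,[v]_1\leq 1$, substituting the pressure estimate leads to a closed nonlinear inequality of the form
\[
M(t)\leq C\|u_0\|_{1+\alpha}+Ct\,M(t)^2,\qquad M(t):=\sup_{s\leq t}\|v(s)\|_{1+\alpha},
\]
and a standard continuity argument yields $M(t)\leq 2C\|u_0\|_{1+\alpha}$ on $[0,T]$ with $T\leq c\|u_0\|_{1+\alpha}^{-1}$. Combined with the continuation principle, this promotes the a priori bound into genuine existence on $[0,T]$. The higher-order bounds \eqref{e:euler_eq_bd_k} are then obtained by induction on $N$: once $\|v\|_{1+\alpha}$ is controlled on $[0,T]$, \eqref{e:trans_est_1} together with the pressure estimate provides a \emph{linear} integral inequality for $[v]_{N+\alpha}$ with coefficients bounded in terms of $\|u_0\|_{1+\alpha}$, and Gronwall closes the loop to give $\|v\|_{N+\alpha}\lesssim \|u_0\|_{N+\alpha}$.

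The main obstacle is the nonlinear coupling in the previous step, where the transported quantity coincides with the advecting field and the forcing $-\nabla p$ is itself quadratic in $v$; it is precisely this coupling that fixes the maximal time $T\sim \|u_0\|_{1+\alpha}^{-1}$. The non-locality of the diffusion $(-\Delta)^\gamma$, on the other hand, does not cause any additional difficulty, since it has already been absorbed into the maximum-principle-based estimates of Proposition \ref{p:transport_derivatives1}, making the argument essentially independent of $\nu>0$ and of $\gamma\in(0,1]$.
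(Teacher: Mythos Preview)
Your argument is correct and essentially identical to the paper's: both combine the H\"older transport estimates of Proposition~\ref{p:transport_derivatives1} with a Schauder/Calder\'on--Zygmund bound on $\nabla p$ to close a quadratic integral inequality for $\|v\|_{1+\alpha}$, use the continuation criterion from \eqref{estHm} to promote the a~priori bound to existence on $[0,T]$ with $T\sim\|u_0\|_{1+\alpha}^{-1}$, and then upgrade to $N+\alpha$ by Gr\"onwall. The only cosmetic differences are that the paper differentiates once and applies \eqref{e:trans_est_alpha} to $Dv$ (rather than invoking \eqref{e:trans_est_1} with $N=1$ directly) and phrases the continuity argument as a contradiction on the maximal time $T^*$ with $T^*[v]_1\leq 1$.
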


\begin{proof}[Proof of Proposition~\ref{p:local:Euler}]
We first show that all solutions given by Theorem \ref{localexist} exist in the interval $[0,T]$, for any $T\lesssim\norm{u_0}_{1+\alpha}^{-1}$.
Fix any $\alpha\in (0,1)$ and let $T^*$ be the maximal time such that
\[
T^* \sup_{0\leq t\leq T^*}[v(t)]_1\leq 1\,.
\]
Suppose $T^*<c\|u_0\|_{1+\alpha}^{-1}$, for some constant $c=c(\alpha)$ to be fixed later (we will see that this contraddicts the assumption on the maximality of $T^*$, in particular $T^*\geq c\|u_0\|_{1+\alpha}^{-1}$).
Using Schauder estimate on $-\Delta p=tr(\nabla v  \nabla v)$ we have
\[
\|p(t)\|_{2+\alpha} \lesssim \|v(t)\|_{1+\alpha}^2\, ,
\]

thus, differentiating the equation in the $x$ variable we get
\[
\|(\partial_t +v\cdot \nabla +\nu \gammalaplace) Dv \|_\alpha \lesssim \|v(t)\|_{1+\alpha}^2\,.
\]
By Proposition \ref{p:transport_derivatives1}, for any $0\leq t\leq T^*$, we have
\[
\|v(t)\|_{1+\alpha}\lesssim \|u_0\|_{1+\alpha}+\int_0^t \|v(s)\|_{1+\alpha}^2\, ds\,.
\]
Finally, using Gronwall's inequality we get the estimate
\[
\|v(t)\|_{1+\alpha}\lesssim \|u_0\|_{1+\alpha}<\frac{1}{T^*}\, \quad \forall t \in[0,T^*]\,,
\]
where in the last inequality we have choosen the constant $c=c(\alpha)$ to get it "strict". Obviously, this contraddicts the hypothesis on the maximality of $T^*$, and also gives the a priori estimate \eqref{e:euler_eq_bd_k} for $N=1$, which together with \eqref{estHm}, gives the existence of a smooth solution in the interval $[0,T]$, for any $ T\leq c\norm{u_0}_{1+\alpha}^{-1}$.

We are left with the higher-order bounds \eqref{e:euler_eq_bd_k} for $N\geq2$. For any multi-index $\theta$ with $|\theta|=N$ we have
\[
\partial_t\partial^{\theta}v+v\cdot\nabla\partial^\theta v+\nu \gammalaplace \partial^{\theta}v +[\partial^\theta,v\cdot\nabla]v+\nabla\partial^\theta p=0.
\]
Using again Schauder estimates for the pressure we obtain
\[
\|\nabla\partial^{\theta}p\|_{\alpha}\lesssim \|{\rm tr}\, (\nabla v \nabla v)\|_{N-1+\alpha}\lesssim \|v\|_{1+\alpha}\|v\|_{N+\alpha}.
\]
Therefore 
\[
\|(\partial_t+v\cdot\nabla+\nu\gammalaplace)\partial^\theta v\|_{\alpha}\lesssim  \|v\|_{1+\alpha}\|v\|_{N+\alpha},
\]
and \eqref{e:euler_eq_bd_k} follows by applying  \eqref{e:trans_est_alpha}  and Gr\"onwall's inequality.
\end{proof}

\section{The main inductive Proposition and proofs of Theorems \ref{vanishvisc} and \ref{assegnocinetica} }
As already outlined, the main construction is taken from \cite{BDLSV2017}, thus we are not going to prove all technical details about the mechanism of the convex integration scheme. However all the proofs of the propositions involving the  structure of the Navier-Stokes equations (different from the Euler ones), are completely self contained. 

\subsection{Inductive proposition}
First of all, we impose for the moment that
\begin{equation}\label{e:energy_time_D}
\sup_{t\in [0,1]} \abs{ e'(t)}\leq 1\, 
\end{equation}
(we will see later that this can be done provided that we impose some conditions on the parameters appearing in the iteration). 

Let then $q\geq 0$ be a natural number. At a given step $q$ we assume to have a triple $(v_q,p_q,\mathring{R}_q)$ to the fractional Navier-Stokes Reynolds system, namely such that
\begin{equation}\label{NSR}
\left\{\begin{array}{l}
\partial_t v_q + \div (v_q\otimes v_q) + \nabla p_q +\nu \gammalaplace v_q =\div\mathring{R}_q\\ \\
\div v_q = 0\, ,
\end{array}\right.
\end{equation} 
to which we add the constraints 
\begin{align}
&\tr \mathring{R}_q=0 \,,\label{e:trace_free} \\
\int_{\T^3} p_q& (x,t)\, dx = 0\,.\label{e:press_const}
\end{align}
 In \eqref{NSR} the viscosity $\nu$ is just some small constant (in particular $\nu <1$) depending on some parameters of the inductive construction. In what follows we will see that this coefficient comes from a "technical rescaling" on the equations \eqref{NSgamma}.

The size of the approximate solution $v_q$ and the error $\mathring{R}_q$ will be measured by a frequency $\lambda_q$ and an amplitude $\delta_q$, which are given by
\begin{align}
\lambda_q&= 2\pi \ceil{a^{(b^q)}}\label{e:freq_def}\\
\delta_q&=\lambda_q^{-2\beta} \label{e:size_def}
\end{align}
where $\ceil{x}$ denotes the smallest integer $n\geq x$, $a>1$ is a  large parameter, $b>1$ is close to $1$ and $0<\beta<\sfrac13$ is the exponent of Theorem \ref{assegnocinetica}. The parameters $a$ and $b$ are then related to $\beta$. 

We proceed by induction, assuming the estimates
\begin{align}
\norm{\mathring R_q}_{0}&\leq  \delta_{q+1}\lambda_q^{-3\alpha}\label{e:R_q_inductive_est}\\
\norm{v_q}_1&\leq M \delta_q^{\sfrac12}\lambda_q\label{e:v_q_inductive_est}\\
\norm{v_q}_0 & \leq 1- \delta_q^{\sfrac12}\label{e:v_q_0}\\
\delta_{q+1}\lambda_q^{-\alpha} &\leq e(t)-\int_{\T^3}\abs{  v_q}^2\,dx\leq \delta_{q+1}\label{e:energy_inductive_assumption}
\end{align}
where $0 < \alpha  < 1$ is a small parameter to be chosen suitably (which will depend upon $\beta$), and $M$ is a universal constant.

\begin{proposition}\label{p:main} There exists a universal constant $M$ with the following property. Let $0<\beta<\sfrac13$, $0<\gamma<\sfrac13$ and
\begin{equation}\label{e:b_beta_rel}
1<b<min\bigg\{\frac{1-\beta}{2\beta},\frac{1-\beta}{2\gamma}, \frac{4}{3} \bigg\}\,.
\end{equation}
Then there exists an $\alpha_0$ depending only on $\beta$ and $b$, such that for any $0<\alpha<\alpha_0$ there exists an $a_0$ depending on $\beta$, $b$, $\alpha$ and $M$, such that for any $a\geq a_0 $ the following holds: given a strictly positive function $e: [0, T]\to \R^+ $ satisfying \eqref{e:energy_time_D}, and a  triple $(v_q, p_q,\mathring R_q)$ solving \eqref{NSR}-\eqref{e:press_const} and satisfying the estimates \eqref{e:R_q_inductive_est}--\eqref{e:energy_inductive_assumption}, then there exists a solution  $(v_{q+1}, p_{q+1}, \mathring R_{q+1})$ to \eqref{NSR}-\eqref{e:press_const} satisfying \eqref{e:R_q_inductive_est}--\eqref{e:energy_inductive_assumption} with $q$ replaced by $q+1$. Moreover, we have 
\begin{equation}
\norm{v_{q+1}-v_q}_0+\frac{1}{\lambda_{q+1}}\norm{v_{q+1}-v_q}_1 
\leq M\delta_{q+1}^{\sfrac12}\label{e:v_diff_prop_est}.
\end{equation}
Furthermore, $v_{q+1}(\cdot,0)$ depends only on $e(0)$ and $v_q(\cdot,0)$.
\end{proposition}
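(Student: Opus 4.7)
The plan is to execute the convex integration scheme of \cite{BDLSV2017} verbatim wherever the fractional dissipation does not interfere, and to absorb the extra term $\nu\gammalaplace v$ as a new contribution to the Reynolds stress at the next level. The construction proceeds in three blocks: (i) a mollification of $(v_q,\mathring R_q)$ at an intermediate length scale $\ell$, with $\lambda_q^{-1}\ll\ell\ll\lambda_{q+1}^{-1}$, producing $(v_\ell,\mathring R_\ell)$; (ii) a gluing step that evolves $v_\ell$ by the exact fractional Navier--Stokes equations on short subintervals of length $\tau_q$ and interpolates the resulting exact solutions, yielding a new velocity $\bar v_q$ whose Reynolds error $\bar{\mathring R}_q$ is supported in small portions of each subinterval and no longer contains a transport-type component; (iii) a perturbation step in which a highly oscillatory Mikado-type field $w_{q+1}$ of frequency $\lambda_{q+1}$ is added so that $w_{q+1}\otimes w_{q+1}$ cancels the low-frequency part of $\bar{\mathring R}_q$.

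The gluing step is the first place the fractional dissipation enters. On each subinterval I would use Proposition \ref{p:local:Euler} to produce an exact solution $v_i$ of \eqref{NSgamma_ap} starting from the appropriate time slice of $v_\ell$, and then use the stability estimates of Proposition \ref{p:transport_derivatives1} (which hold uniformly in $\nu$ thanks to the maximum principle of Theorem \ref{MP}) to bound the difference $v_i-v_{i+1}$ on the overlap. The extra term $\nu\gammalaplace v_\ell$ that appears in the Reynolds defect is a low-frequency object controlled by $\nu\|v_\ell\|_{2\gamma+\alpha}$ and, being multiplied by the short time $\tau_q$, is absorbed into the gluing error with the same budget used in the Euler case.

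The perturbation step produces four contributions to $\mathring R_{q+1}$: the Nash, transport and oscillation errors (estimated exactly as in \cite{BDLSV2017}, with the transport error vanishing thanks to the gluing), and a new \emph{dissipative error} of the form $\mathcal{R}(\nu\gammalaplace w_{q+1})$, where $\mathcal{R}$ is the inverse divergence. Using $\|w_{q+1}\|_0\lesssim \delta_{q+1}^{\sfrac12}$, that $w_{q+1}$ is frequency-localized at $\lambda_{q+1}$, and that $\mathcal{R}$ gains a factor $\lambda_{q+1}^{-1}$, this error is of order $\nu\lambda_{q+1}^{2\gamma-1}\delta_{q+1}^{\sfrac12}$. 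This is the main obstacle: to close the induction one needs
\[
\nu\,\lambda_{q+1}^{2\gamma-1}\delta_{q+1}^{\sfrac12}\;\leq\;\delta_{q+2}\lambda_{q+1}^{-3\alpha}.
\]
Substituting $\delta_q=\lambda_q^{-2\beta}$ and $\lambda_{q+2}=\lambda_{q+1}^{b}$, this becomes an inequality of the form $2\gamma-1-\beta+2b\beta\leq -C\alpha$, which is solvable for $\alpha$ small provided $\gamma<\sfrac{1-\beta}{2}$. The constraint $b<\sfrac{4}{3}$ in \eqref{e:b_beta_rel} is tailored precisely so that this inequality is compatible with $\beta<\sfrac{1}{3}$ and $\gamma<\sfrac{1}{3}$; the smallness of $\nu$ provides an additional safety margin.

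The energy bound \eqref{e:energy_inductive_assumption} at level $q+1$ is enforced by multiplying the amplitude of each Mikado building block by a time-dependent coefficient tuned so that $\int_{\T^3}|v_{q+1}|^2 = e(t)-\tfrac12(\delta_{q+2}+\delta_{q+2}\lambda_{q+1}^{-\alpha})$; the energy contributed by $|(-\Delta)^{\sfrac{\gamma}{2}}v_{q+1}|^2$ is strictly lower order and is absorbed into the slack between $\delta_{q+2}\lambda_{q+1}^{-\alpha}$ and $\delta_{q+2}$. Finally, the claim that $v_{q+1}(\cdot,0)$ depends only on $e(0)$ and $v_q(\cdot,0)$ is a structural feature of the construction: the gluing is arranged so that $\bar v_q$ coincides with $v_q$ at $t=0$, the perturbation amplitude at $t=0$ is a deterministic function of the scalar $e(0)-\int|v_q(\cdot,0)|^2$, and the spatial phases of the Mikado building blocks are universal, so $v_{q+1}(\cdot,0)$ is determined by $v_q(\cdot,0)$ and $e(0)$ alone.
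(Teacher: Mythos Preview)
Your three–stage outline (mollification, gluing, perturbation) and the identification of the new dissipative error $\mathcal R(\nu\gammalaplace w_{q+1})$ match the paper's strategy. However, several points are inaccurate enough to constitute real gaps in the argument.

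\textbf{The gluing step is mischaracterized.} The term $\nu\gammalaplace v_\ell$ is not an ``extra term in the Reynolds defect'': it is already part of the mollified system \eqref{e:euler_reynolds_l}, and the exact solutions $v_i$ solve the fractional Navier--Stokes equations as well. The genuine new obstacle in the gluing is that the glued Reynolds stress $\mathring{\overline R}_q$ requires control of the \emph{material derivative} $D_{t,\ell}(v_i-v_\ell)$, whereas the maximum-principle estimates of Proposition~\ref{p:transport_derivatives1} only control $L_{t,\ell,\gamma}(v_i-v_\ell)=D_{t,\ell}(v_i-v_\ell)+\nu\gammalaplace(v_i-v_\ell)$. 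Passing from one to the other (see the proof of \eqref{e:z_D_t}) forces the condition $\tau_q\ell^{-2\gamma}\lesssim 1$, i.e.\ $\lambda_{q+1}^{2\gamma}\lesssim\lambda_q^{1-\beta}$. This is one of the two places where $\gamma<\sfrac13$ and the upper bound on $b$ are actually used; you attribute the constraint on $b$ solely to the perturbation step.

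\textbf{The parameter analysis for the dissipative error is off.} From $\nu\lambda_{q+1}^{2\gamma-1}\delta_{q+1}^{\sfrac12}\leq \delta_{q+2}\lambda_{q+1}^{-3\alpha}$ one gets $2\gamma+(2b-1)\beta<1$, not $\gamma<\tfrac{1-\beta}{2}$; the condition depends on $b$ and is \emph{not} automatically satisfied for all $\beta,\gamma<\sfrac13$, $b<\sfrac43$. The paper instead compares the dissipative error directly to the dominant Euler-type bound $\delta_{q+1}^{\sfrac12}\delta_q^{\sfrac12}\lambda_q\lambda_{q+1}^{-1}$ (Proposition~\ref{p:R_q+1}), which reduces to $b(\gamma-3\alpha)<1-\beta$ --- the same inequality that already appeared in the gluing. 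Also, $\nu$ is a fixed rescaling constant ($\nu=\delta_1^{\sfrac12}$); it gives no asymptotic help as $q\to\infty$, so ``the smallness of $\nu$ provides an additional safety margin'' is misleading.

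\textbf{Minor corrections.} The mollification scale satisfies $\lambda_q^{-\sfrac32}\leq\ell\leq\lambda_q^{-1}$, not $\lambda_q^{-1}\ll\ell$. At $t=0$ one has $\overline v_q(\cdot,0)=v_\ell(\cdot,0)=(v_q\ast\psi_\ell)(\cdot,0)$, not $v_q(\cdot,0)$; the conclusion about dependence on $v_q(\cdot,0)$ still holds because the mollification is purely spatial.
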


The proof of Proposition~\ref{p:main} is summarized in the Sections \ref{s:mollification}, \ref{s:gluing_outline} and \ref{s:perturbation_outline}, but its details will occupy most of the paper. We show next that this proposition immediately implies Theorem~\ref{assegnocinetica}.

\subsection{Proof of Theorem \ref{assegnocinetica}} First of all, we fix any H\"older exponent $\beta< \sfrac{1}{3}$ and also the
parameters $b$ and $\alpha$, the first satisfying \eqref{e:b_beta_rel} and the second smaller than the threshold given in Proposition \ref{p:main}. Next we show that, without loss of generality, we may further assume the energy profile satisfies 
\begin{equation}\label{e:normalized_energy}
\inf_t e(t) \geq \delta_{1}\lambda_0^{-\alpha},\qquad\sup_t e(t)\leq \delta_{1}, \quad\mbox{and}\quad \sup_t e'(t)\leq 1\, ,
\end{equation}
provided the parameter $a$ is chosen sufficiently large. 

To see this, we first make the following transformations
\begin{equation}\label{rescaling}
\tilde v(x,t):= \mu \,v(x,\mu t) \quad  \quad \tilde p(x,t) := \mu^2 p(x,\mu t)\,.
\end{equation}
Thus if we choose 
\[
\mu=\delta_{1}^{\sfrac{1}{2}},
\] 
the stated problem  reduces to finding a solution $(\tilde v, \tilde p)$ of 
\begin{equation}\label{NSviscoso}
\left\{\begin{array}{l}
\partial_t \tilde v+ \tilde v\cdot\nabla \tilde v +\nabla \tilde p +\mu \gammalaplace \tilde v=0\\ \\
\div \tilde v = 0
\end{array}\right.
\end{equation}
with the energy profile given by
\[
\tilde e(t)=\mu^2 e(\mu t)\,,
\]
for which we have (using our assumptions on the function $e(t)$)
\[
\inf_t \tilde e(t) \geq \delta_{1}\inf_t e(t) \geq \frac{\delta_1}{2}, \qquad
\sup_t \tilde e(t)\leq  \delta_{1}, \qquad \mbox{and}\qquad
\sup_t\tilde e'(t)\leq \delta_{1}^{\sfrac32} K\, .
\]
If $a$ is chosen sufficiently large, in particualar $a\geq  a_0 K^{\sfrac{1}{3 \beta}}  $,  then we can ensure
\[\sup_t\tilde e'(t)\leq \delta_{1}^{\sfrac32}  K\leq 1, \qquad\mbox{and}\qquad \frac{1}{2}\geq \lambda_0^{-\alpha}\,.\]

Now we apply Proposition \ref{p:main} iteratively with $(v_0,R_0, p_0)=(0,0,0)$. Indeed the pair $(v_0, R_0)$ trivially satisfies \eqref{e:R_q_inductive_est}--\eqref{e:v_q_0}, whereas the estimate \eqref{e:energy_inductive_assumption} and \eqref{e:energy_time_D} follows as a consequence of \eqref{e:normalized_energy}. 
Notice that by \eqref{e:v_diff_prop_est} $v_q$ converges uniformly to some continuous $\tilde v$. Moreover, we recall that the pressure
is determined by
\begin{equation}\label{e:pressure_eq}
\Delta p_q = \div \div (-v_q\otimes v_q + \mathring{R}_q)
\end{equation}
and \eqref{e:press_const} and thus $p_q$ is also converging to some pressure $\tilde p$ (for the moment only in $L^r$ for every $r<\infty$). Since $\mathring{R}_q\to 0$ uniformly, the pair $(\tilde v,\tilde p)$ solves equations \eqref{NSviscoso}. 

Observe that using \eqref{e:v_diff_prop_est} we also infer\footnote{Throughout the manuscript we use the the notation $x \lesssim y$ to denote  $x \leq C y$, for a sufficiently large constant $C>0$, which is independent of $a,b$, and $q$, but may change from line to line.} 
\begin{align*}
\sum_{q=0}^{\infty} \norm{v_{q+1}-v_q}_{\beta'} &
\lesssim \; \sum_{q=0}^{\infty} \norm{v_{q+1}-v_q}_{0}^{1-\beta'}\norm{v_{q+1}-v_q}_{1}^{\beta'}
\lesssim \; \sum_{q=0}^{\infty} \delta_{q+1}^{\frac{1-\beta'}2}\left(\delta_{q+1}^{\sfrac12}\lambda_q\right)^{\beta'} \notag
\lesssim  \sum_{q=0}^{\infty} \lambda_q^{\beta'-\beta}
\end{align*}
and hence that $v_q$ is uniformly bounded in $C^0_tC^{\beta'}_x$ for all $\beta'<\beta$. Using the last inequality and the definitions of the parameters $\lambda_q$ we also have that if $a$ is chosen sufficiently large, then
\[
 \|\tilde v\|_{\beta'} \leq 1\, , \qquad \forall \beta'<\beta.
\]
Since $\delta_{q+1}\rightarrow 0$ as $q\rightarrow \infty$, from \eqref{e:energy_inductive_assumption} we have
\begin{equation*}
\int_{\T^3}\abs{  \tilde v}^2\,dx= \tilde e(t)\, ,
\end{equation*}
If now we use the transformation
\[
 v(x,t):= \mu^{-1} \,\tilde v(x,\mu^{-1} t) \quad \mbox{and} \quad  p(x,t) := \mu^{-2} \tilde p(x,\mu^{-1} t)\, ,
\]
then it is clear that the pair $(v,p)$ solves \eqref{NSgamma_senzadato} and it satisfies \eqref{condizione1} and \eqref{condizione2}.
 To recover the time regularity we fix a smooth  standard mollifier $\psi$  in space, let $q \in \N$,  and consider $\tilde v_q := v*\psi_{2^{-q}}$, where $\psi_{\ell}(x) = \ell^{-3} \psi(x \ell^{-1})$. From standard mollification estimates we have
\begin{align}
\norm{\tilde  v_q-v}_0\lesssim \norm{v}_{\beta'} 2^{-q \beta'},
\label{e:trivial:1}
\end{align}
 and thus $\tilde v_q - v \to 0$ uniformly as $q \to \infty$. Moreover, $\tilde v_q$ obeys the following equation
 \[
 \partial_t \tilde v_{q}+\div\left(v  \otimes v \right)*\psi_{2^{-q}}  +\nabla p * \psi_{2^{-q}}+\gammalaplace \tilde v_q=0.
 \]
Next, since
\[
-\Delta p *\psi_{2^{-q}} = \div \div (  v\otimes v )* \psi_{2^{-q}} \, ,
\]
using Schauder's estimates, for any fixed $\varepsilon>0$ we get
\[
\|\nabla p * \psi_{2^{-q}}\|_0 \leq \|\nabla p * \psi_{2^{-q}}\|_\varepsilon \lesssim \|v\otimes v\|_{\beta'} 2^{q(1 + \eps - \beta')}
\lesssim \|v\|^2_{\beta'} 2^{q(1 + \eps - \beta')}\, ,
\]
(where the constant in the estimate depends on $\varepsilon$ but not on $q$). Similarly, 
\[
\norm{ \left(v  \otimes v \right)*\psi_{2^{-q}}}_1 \lesssim\norm{v\otimes v}_{\beta'}  2^{q(1-\beta')}  \lesssim \norm{v}_{\beta'}^2  2^{q(1-\beta')} \,
\]
\[
\| \gammalaplace \tilde v_q\|_0 \leq \| \tilde v_q\|_1 \lesssim \norm{v}_{\beta'}  2^{q(1-\beta')} \, .
\]
Thus the above estimates yield
\begin{align}
\norm{\partial_t \tilde  v_{q}}_0 &\lesssim  \|v\|^2_{\beta'} 2^{q(1 + \eps - \beta')}\,.
\label{e:trivial:2}
\end{align}
Next, for $\beta'' < \beta'$ we conclude from \eqref{e:trivial:1} and \eqref{e:trivial:2} that
\begin{align*}
\norm{\tilde v_q - \tilde v_{q+1}}_{C^0_x C^{\beta''}_t}  
&\lesssim \left( \norm{\tilde v_q - v}_0 + \norm{\tilde v_{q+1} - v}_0 \right)^{1-\beta''}
\left(\norm{\partial_t \tilde v_q}_0  + \norm{\partial_t \tilde v_{q+1}}_0\right)^{\beta''}
\notag\\
&\lesssim \norm{v}_{\beta'}^{1 + \beta''} 2^{- q \beta' (1-\beta'')} 2^{q \beta''(1+ \eps - \beta')} = \norm{v}_{\beta'}^{1 + \beta''} 2^{- q (\beta' - (1+ \eps) \beta'')} \notag\\
&\lesssim \norm{v}_{\beta'}^{1 + \beta''} 2^{-q \eps}
\end{align*}
Here we have chosen $\eps>0$ sufficiently small (in terms of $\beta'$ and $\beta''$) so that  that $\beta' - (1+ \eps) \beta'' \geq \eps $. Thus, the series 
\[
v = \tilde v_0 + \sum_{q\geq0} (\tilde v_{q+1} - \tilde v_q)
\]
converges in $C^0_x C^{\beta''}_t$. Since we already know $v \in C^0_t C^{\beta'}_x$, we obtain that $v \in C^{\beta''}([0,1] \times \T^3)$ as desired, with $\beta'' < \beta' < \beta < 1/3$ arbitrary.

This concludes the proof of the theorem. 
\subsection{Proof of Theorem \ref{vanishvisc}}
Let $v \in C_{t,x}^{\beta'}$ be a dissipative solution of Euler, with the kinetic energy profile satisfying the assumptions $(i)-(v)$ in the proof of Theorem \ref{t:locale} (note that the proof of the existence of such solution is given in \cite{BDLSV2017}). Using the rescaling \eqref{rescaling}, with $\mu:= (2 \|v\|_0)^{-1}$, we can assume that $\|v\|_0\leq \sfrac{1}{2}$.

We fix two positive kernels (Friedrichs mollifiers) $\varphi$ and $\psi$, respectively in space and time. Let $\delta_n:=a^{-b^{n+2}}$ and $\nu_n:=\delta_n^{1+\beta'}$. Since $v$ solves Euler,  the smooth function $v_n:=(v \ast  \varphi_{\delta_n}) \ast \psi_{\delta_n}$ solves the following Navier-Stokes Reynolds equations
\[
\partial_t v_n+\div(v_n \otimes v_n)+\nabla p_n + \nu_n \gammalaplace v_n= \div \mathring{R}_n\, ,
\]
with 
\[
 \mathring{R}_n=v_n\mathring{\otimes} v_n-(v \mathring{\otimes} v)_n + \nu_n \mathcal{R}\gammalaplace v_n\, ,
\]
where $f \mathring{\otimes} g$ is the traceless part of the matrix $ f \otimes g$ and $\mathcal{R}$ is the operator defined in \eqref{e:R:def}. We also define the energy as
\begin{equation}\label{energian}
e_n(t):= \int_{\T^3}|v_n|^2 \, dx + \delta_{n+1}\lambda_n^{-\alpha}\,.
\end{equation}
Using standard mollification estimates and Proposition \ref{p:CET} we have
\begin{align*}
\|v_n\|_1 &\lesssim \delta_n^{\beta'-1}\,, \\
\| \mathring{R}_n\|_0 &\lesssim \delta_n^{2 \beta'}+\nu_n [v_n]_1 \lesssim \delta_n^{2 \beta'}\, .
\end{align*}
Thus, if we chose $\gamma <\beta<\beta'$ and the parameter $a$ large enough, we can guarantee that \eqref{e:R_q_inductive_est}-\eqref{e:energy_inductive_assumption} hold for $q=n$, provided that $b$ is sufficiently near $1$ and $\alpha$ is small.

We can now apply Proposition \ref{p:main} (inductively for $q\geq n$) in order to obtain a solution $v^{(\nu_n)}$ of \eqref{NSvisc}, and since $\gamma <\beta$ (as already done in the proof of Theorem \ref{t:locale}) we can guarantee that $v^{(\nu_n)}$ is indeed a Leray-Hopf weak solution.

Moreover by \eqref{e:v_diff_prop_est} we have
\[
\|v^{(\nu_n)}-v_n\|_{\beta''}\leq \sum_{q\geq n} \|v_{q+1}-v_q\|_{\beta''} \lesssim \sum_{q\geq n} a^{(\beta''-\beta)b^{q+1}}.
\]
Thus, provided that the parameter $a$ is chosen even larger, we can ensure that 
\[
\|v^{(\nu_n)}-v\|_{\beta''}\leq \|v^{(\nu_n)}-v_n\|_{\beta''}+\|v_n-v\|_{\beta''}\leq \frac{1}{n}\,, \qquad \forall \beta'' <\beta\, ,
\] 
and this concludes the proof of the theorem. We also remark that $e_n(t)\rightarrow\int_{\T^3} |v|^2\,dx$ as $n\rightarrow +\infty$.

\section{The convex integration scheme and proof of the iterative Proposition}

The rest of the paper is devoted to the proof of Proposition \ref{p:main}. To simplify several estiamtes we will assume that $\alpha$ is small enough so to have
\begin{align}\label{e:some_param_ineq}
\lambda_{q}^{3\alpha}\leq\left(\frac{\delta_q}{\delta_{q+1}}\right)^{\sfrac32}\leq \frac{\lambda_{q+1}}{\lambda_q}\, ,
\end{align}
in which we also need that $a$ is big enough to nullify any constant from the ratio $\lambda_q/a^{(b^q)}$, which can be easily bounded as
\begin{equation}\label{e:bloody_integers}
2\pi \leq \frac{\lambda_q}{a^{b^q}}\leq 4\pi\, .
\end{equation}

Following the construction of \cite{BDLSV2017} we subdivide the proof in three stages, in each of which we modify $v_q$: mollification, gluing and perturbation.
\subsection{Mollification step}\label{s:mollification}
The first stage is mollification: we mollify $v_q$ (in space) at length scale 
\begin{equation}\label{e:ell_def}
\ell:=\frac{\delta_{q+1}^{\sfrac 12}}{\delta_q^{\sfrac12 }\lambda_q^{1+\sfrac{3\alpha}{2}}}\, .
\end{equation}
Fix a standard mollification kernel  $\psi$, we define
\begin{align*}
v_{\ell}:=& v_q*\psi_\ell\\
\mathring{R}_{\ell}:=& \mathring R_q*\psi_\ell  -(v_q\mathring\otimes v_q)*\psi_\ell  + v_{\ell}\mathring\otimes v_{\ell} \,.
\end{align*}
These functions obey the equation
\begin{equation}\label{e:euler_reynolds_l}
\left\{\begin{array}{l}
\partial_t v_\ell + \div (v_\ell\otimes v_\ell) + \nabla p_\ell+\nu \gammalaplace v_\ell =\div\mathring{R}_\ell\\ \\
\div v_\ell = 0\, ,
\end{array}\right.
\end{equation} 
in view of \eqref{NSR}.

Observe, again choosing $\alpha$ sufficiently small and $a$ sufficiently large we can assume
\begin{equation}\label{e:compare-lambda-ell}
\lambda_q^{-3/2}\leq\ell \leq \lambda_q^{-1}\,,
\end{equation}
which will be used in order to simplify  several estimates.

From  standard mollification estimates we  obtain the following bounds\footnote{In the following, when considering higher order norms $\|\cdot\|_N$ or $\|\cdot\|_{N+1}$, the symbol $\lesssim$ will imply that the constant in the inequality might also depend on $N$.}
 (we refer to \cite{BDLSV2017} for a detailed proof). 
\begin{proposition}\label{p:est_mollification}
\begin{align}
\norm{v_{\ell}-v_q}_0&\lesssim \delta_{q+1}^{\sfrac12}\lambda_q^{-\alpha}\,,\label{e:v:ell:0}\\
\norm{v_{\ell}}_{N+1} &\lesssim \delta_q^{\sfrac 12}\lambda_q\ell^{-N}\qquad \forall N \geq 0\,,  \label{e:v:ell:k}\\
\norm{\mathring{R}_{\ell}}_{N+\alpha}&\lesssim  \delta_{q+1}\ell^{-N+\alpha} \qquad \forall N\geq 0\,. \label{e:R:ell}\\
\abs{\int_{\T^3}\abs{v_q}^2-\abs{v_{\ell}}^2\,dx} &\lesssim \delta_{q+1}\ell^{\alpha}\,.
\label{e:vq_vell_energy_diff}
\end{align}
\end{proposition}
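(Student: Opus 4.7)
The four bounds are standard mollification estimates, and I would prove them using three ingredients that are essentially classical: (i) Young's inequality together with interpolation gives $\|f \ast \psi_\ell\|_{N+\alpha} \lesssim \ell^{-N-\alpha}\|f\|_0$, obtained from the two endpoint bounds $\|f\ast\psi_\ell\|_N \lesssim \ell^{-N}\|f\|_0$ and $\|f\ast\psi_\ell\|_{N+1} \lesssim \ell^{-N-1}\|f\|_0$; (ii) the modulus-of-continuity bound $\|f - f \ast \psi_\ell\|_0 \lesssim \ell \|f\|_1$, arising from $|f(x)-f(x-y)| \leq |y|\|f\|_1$ on $\supp \psi_\ell$; and (iii) the Constantin--E--Titi commutator estimate
\begin{equation*}
\|(f \otimes g) \ast \psi_\ell - (f \ast \psi_\ell) \otimes (g \ast \psi_\ell)\|_{N+\alpha} \lesssim \ell^{2-N-\alpha}\|f\|_1\|g\|_1\,,
\end{equation*}
which for $N+\alpha=0$ follows from the pointwise identity
\[
(f\otimes g)\ast\psi_\ell(x) - (f\ast\psi_\ell)(x)\otimes (g\ast\psi_\ell)(x) = \int \bigl(f(x-y) - f\ast\psi_\ell(x)\bigr)\otimes\bigl(g(x-y) - g\ast\psi_\ell(x)\bigr)\,\psi_\ell(y)\,dy
\]
combined with (ii), and for higher $N$ and $\alpha$ by differentiating and interpolating in the same fashion.

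For \eqref{e:v:ell:0}, I apply (ii) to $v_q$ together with the inductive bound \eqref{e:v_q_inductive_est} to get $\|v_\ell-v_q\|_0 \lesssim \ell\,\delta_q^{\sfrac12}\lambda_q$, which via \eqref{e:ell_def} equals $\delta_{q+1}^{\sfrac12}\lambda_q^{-3\alpha/2} \leq \delta_{q+1}^{\sfrac12}\lambda_q^{-\alpha}$. For \eqref{e:v:ell:k}, I differentiate and use (i): $\|v_\ell\|_{N+1} = \|v_q \ast D\psi_\ell\|_N \lesssim \ell^{-N}\|v_q\|_1 \lesssim \delta_q^{\sfrac12}\lambda_q\,\ell^{-N}$.

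For \eqref{e:R:ell}, I decompose $\mathring R_\ell = \mathring R_q \ast \psi_\ell + \bigl(v_\ell \mathring\otimes v_\ell - (v_q \mathring\otimes v_q) \ast \psi_\ell\bigr)$. By (i) and \eqref{e:R_q_inductive_est} the first term is bounded by $\ell^{-N-\alpha}\delta_{q+1}\lambda_q^{-3\alpha} = \delta_{q+1}\ell^{-N+\alpha}\cdot(\ell^{-2\alpha}\lambda_q^{-3\alpha})$; by (iii) and \eqref{e:v_q_inductive_est} the second is bounded by $\ell^{2-N-\alpha}\delta_q\lambda_q^2$, which the definition \eqref{e:ell_def} of $\ell$ turns into exactly $\delta_{q+1}\ell^{-N+\alpha}\cdot(\ell^{-2\alpha}\lambda_q^{-3\alpha})$ as well. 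In both cases the offending factor $\ell^{-2\alpha}\lambda_q^{-3\alpha}$ is bounded by $1$ via $\ell \geq \lambda_q^{-3/2}$ from \eqref{e:compare-lambda-ell}.

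For \eqref{e:vq_vell_energy_diff}, the key trick is that $\int f\ast\psi_\ell\,dx = \int f\,dx$, allowing me to rewrite
\begin{equation*}
\int_{\T^3}\bigl(|v_q|^2 - |v_\ell|^2\bigr)\,dx = \int_{\T^3}\tr\bigl((v_q\otimes v_q)\ast\psi_\ell - v_\ell\otimes v_\ell\bigr)\,dx\,,
\end{equation*}
to which (iii) at $N=\alpha=0$ applies, giving $\ell^2\|v_q\|_1^2 \lesssim \ell^2\delta_q\lambda_q^2 = \delta_{q+1}\lambda_q^{-3\alpha} \lesssim \delta_{q+1}\ell^\alpha$ via \eqref{e:compare-lambda-ell}. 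No conceptual difficulty arises beyond the commutator identity; the only subtlety is the careful bookkeeping of the exponents among $\ell$, $\lambda_q$, $\delta_q$, $\delta_{q+1}$, which the parameter inequalities \eqref{e:some_param_ineq} and \eqref{e:compare-lambda-ell} have been designed precisely to absorb, so I would simply invoke \cite{BDLSV2017} for the fully detailed verification.
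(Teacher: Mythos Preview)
Your proposal is correct and follows exactly the standard approach the paper has in mind: the paper itself gives no proof of this proposition but simply cites \cite{BDLSV2017} for the details, and your argument---combining the elementary mollification bounds (i)--(ii) with the Constantin--E--Titi commutator estimate (which is precisely Proposition~\ref{p:CET} in the appendix)---is the computation carried out there. The bookkeeping of exponents you outline is accurate, including the use of \eqref{e:compare-lambda-ell} to absorb the factor $\ell^{-2\alpha}\lambda_q^{-3\alpha}$.
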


\subsection{Gluing step}\label{s:gluing_outline}
In the second stage we glue together exact solutions to the fractional Navier-Stokes equations in order to produce a new $\overline v_q$, close to $v_q$, whose associated Reynolds stress error has support in pairwise disjoint temporal regions of length $\tau_q$ in time, where
\begin{equation}\label{e:tau_def}
\tau_q= \frac{\ell^{2\alpha}}{\delta_q^{\sfrac 12}\lambda_q}.
\end{equation} 

Note that we have the CFL-like condition
\begin{equation}
2\tau_q \norm{v_{\ell}}_{1+\alpha} \stackrel{\eqref{e:v:ell:k}}{\lesssim} \tau_q\delta_q^{\sfrac{1}{2}} \lambda_q \ell^{-\alpha}  \lesssim \ell^{\alpha} \ll 1 \label{e:CFL}
\end{equation}
as long as $a$ is sufficiently large.

More precisely, we aim to construct a new triple $(\overline v_q,  \mathring{\overline R_q},\overline p_q)$ solving the Navier-Stokes Reynolds equation \eqref{NSR} such that the temporal support of $ \mathring{\overline R_q} $ is contained in pairwise disjoint intervals $I_i$ of length $\sim\tau_q$ and such that the gaps between neighbouring intervals is also of length $\sim\tau_q$.

For each $i$, let $t_i= i \tau_q$, and consider smooth solutions of the fractional Navier-Stokes equations
\begin{equation}
\left\{\begin{array}{l}
\partial_t v_i + \div (v_i \otimes v_i) + \nabla p_i+ \nu \gammalaplace v_i=0\\ \\
\div v_i = 0\\ \\
v_i(\cdot,t_i)=v_{\ell}(\cdot, t_i)
\, .
\end{array}\right.
\label{e:vi:def}
\end{equation} 
defined over their own maximal interval of existence. An immediate consequence of \eqref{e:v:ell:k}, \eqref{e:tau_def} and Proposition \ref{p:local:Euler} is the following
\begin{corollary}
\label{c:size:vi}
If $a$ is sufficiently large, for $0\leq(t-t_i)\leq 2\tau_q$,  we have
\begin{equation}\label{e:z:N+alpha}
\norm{v_i}_{N+\alpha} \lesssim \delta_q^{\sfrac12}\lambda_q\ell^{1-N-\alpha}\lesssim \tau_q^{-1}\ell^{1-N+\alpha}\,\qquad
\mbox{for any $N\geq 1$}.
\end{equation}
\end{corollary}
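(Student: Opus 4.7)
The statement is a direct corollary of the local existence result (Proposition \ref{p:local:Euler}) applied to the initial datum $v_\ell(\cdot, t_i)$, which is smooth by mollification. My plan has three short steps.

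First, I would check that Proposition \ref{p:local:Euler} actually applies on the time interval $[t_i, t_i+2\tau_q]$. That Proposition needs $2\tau_q \leq c(\alpha)\,\|v_\ell(\cdot,t_i)\|_{1+\alpha}^{-1}$. From the mollification bound \eqref{e:v:ell:k} (with $N=0$) combined with the standard estimate $\|f*\psi_\ell\|_{1+\alpha}\lesssim \|f*\psi_\ell\|_{1}\ell^{-\alpha}$, we have
\[
\|v_\ell(\cdot, t_i)\|_{1+\alpha}\lesssim \delta_q^{\sfrac12}\lambda_q\,\ell^{-\alpha}.
\]
Multiplying by $2\tau_q = 2\ell^{2\alpha}/(\delta_q^{\sfrac12}\lambda_q)$ gives $2\tau_q\|v_\ell(\cdot, t_i)\|_{1+\alpha}\lesssim \ell^\alpha$, which is the CFL-like condition \eqref{e:CFL}. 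By choosing $a$ large enough, $\ell^\alpha$ can be made smaller than the constant $c(\alpha)$ of Proposition \ref{p:local:Euler}, so the unique smooth solution $v_i$ of \eqref{e:vi:def} exists on $[t_i, t_i+2\tau_q]$.

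Second, I would transfer the bound for $v_i$ to a bound for $v_\ell(\cdot, t_i)$ via Proposition \ref{p:local:Euler}, which yields, for every $N\geq 1$,
\[
\|v_i(\cdot, t)\|_{N+\alpha}\lesssim \|v_\ell(\cdot, t_i)\|_{N+\alpha},\qquad 0\leq t-t_i\leq 2\tau_q.
\]
Applying \eqref{e:v:ell:k} once more, together with the mollification bump $\|f*\psi_\ell\|_{N+\alpha}\lesssim \|f*\psi_\ell\|_{N}\ell^{-\alpha}$ (or, equivalently, \eqref{e:v:ell:k} at order $N-1$ multiplied by $\ell^{-\alpha}$), I get
\[
\|v_\ell(\cdot, t_i)\|_{N+\alpha}\lesssim \delta_q^{\sfrac12}\lambda_q\,\ell^{-(N-1)}\ell^{-\alpha} = \delta_q^{\sfrac12}\lambda_q\,\ell^{1-N-\alpha},
\]
which is the first bound in \eqref{e:z:N+alpha}. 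The second bound is a pure rewriting using the definition \eqref{e:tau_def} of $\tau_q$: since $\delta_q^{\sfrac12}\lambda_q=\ell^{2\alpha}/\tau_q$, we have
\[
\delta_q^{\sfrac12}\lambda_q\,\ell^{1-N-\alpha}=\tau_q^{-1}\ell^{1-N+\alpha}.
\]

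There is no real obstacle: the only thing that must be verified carefully is the CFL condition, and that is already recorded in \eqref{e:CFL}. The argument is entirely a combination of the mollification estimates of Proposition \ref{p:est_mollification}, the algebraic relation between $\ell$, $\tau_q$, $\delta_q$, $\lambda_q$, and the H\"older-space stability statement of Proposition \ref{p:local:Euler}; no new analytic input is needed for this corollary.
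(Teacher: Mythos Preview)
Your proposal is correct and follows exactly the approach indicated in the paper, which states the corollary as an immediate consequence of \eqref{e:v:ell:k}, \eqref{e:tau_def} and Proposition~\ref{p:local:Euler}. You have simply spelled out the three steps (verifying the CFL-type smallness condition so that Proposition~\ref{p:local:Euler} applies, transferring the bound to the mollified initial datum via \eqref{e:v:ell:k}, and rewriting via \eqref{e:tau_def}) that the paper leaves implicit.
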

We will now show that for $0\leq(t-t_i)\leq 2\tau_q$, $v_i$ is close to $v_{\ell}$ and by the identity 
\begin{equation*}
v_i - v_{i+1} = (v_{i}-v_{\ell})-(v_{i+1}-v_{\ell}),
\end{equation*}
the vector field $v_i$ is also close to $v_{i+1}$.

\begin{proposition}[Stability and estimates on \texorpdfstring{$v_i-v_\ell$}{vi-vl}]
\label{p:vi:vell}
For $0\leq(t-t_i)\leq 2\tau_q$, $N\geq 0$ and $0<\nu<1$ we have
\begin{align}
\norm{v_i-v_{\ell}}_{N+\alpha} \lesssim & \tau_q\delta_{q+1}\ell^{-N-1+\alpha}\,, \label{e:z_diff_k}\\
\norm{\nabla(p_{\ell} - p_i)}_{N+\alpha} &\lesssim \delta_{q+1}\ell^{-N-1+\alpha}\,, \label{e:pressure_1}\\
\norm{L_{t,\ell,\gamma}(v_i-v_{\ell})}_{N+\alpha} &\lesssim  \delta_{q+1}\ell^{-N-1+\alpha}\,, \label{e:z_D_t_lapl}\\
\norm{D_{t,\ell}(v_i-v_{\ell})}_{N+\alpha} &\lesssim  \delta_{q+1}\ell^{-N-1+\alpha}\,, \label{e:z_D_t}
\end{align}
where we write
\begin{align}
D_{t,\ell}=\partial_t+v_{\ell}\cdot\nabla \qquad L_{t,\ell,\gamma}=D_{t,\ell}+\nu \gammalaplace\,.
\label{e:Dtell:def}
\end{align}

\end{proposition}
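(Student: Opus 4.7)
The starting point is to subtract \eqref{e:vi:def} from \eqref{e:euler_reynolds_l}, using $\div v_i = \div v_\ell = 0$ to rearrange the nonlinearity, which yields the coupled non-local advection-diffusion system for $w := v_i - v_\ell$:
\begin{align*}
L_{t,\ell,\gamma} w + (w \cdot \nabla) v_i + \nabla(p_i - p_\ell) &= -\div \mathring{R}_\ell, \qquad w(\cdot, t_i) = 0,\\
-\Delta (p_i - p_\ell) &= \div \div \bigl(w \otimes v_i + v_\ell \otimes w + \mathring{R}_\ell\bigr).
\end{align*}
The plan is to run the stability estimates of Proposition~\ref{p:transport_derivatives1} on the equation for $w$, treating $(w \cdot \nabla) v_i + \nabla(p_i - p_\ell) + \div \mathring{R}_\ell$ as a forcing, and close the inequalities jointly with the pressure bound through a bootstrap. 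The CFL-type hypothesis $(t - t_i)[v_\ell]_1 \leq 1$ needed to invoke Proposition~\ref{p:transport_derivatives1} is already verified by \eqref{e:CFL}.

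The leading contribution to the forcing is $-\div \mathring{R}_\ell$, whose H\"older norm is controlled by $\|\mathring{R}_\ell\|_{N+1+\alpha} \lesssim \delta_{q+1}\ell^{-N-1+\alpha}$ via \eqref{e:R:ell}; after integration over $[t_i,t]$ with $t - t_i \leq 2\tau_q$ this yields exactly the target $\tau_q \delta_{q+1}\ell^{-N-1+\alpha}$ of \eqref{e:z_diff_k}. The nonlinear piece $(w \cdot \nabla) v_i$ is subleading: using the H\"older product rule, Corollary~\ref{c:size:vi}, and the identity $\tau_q \delta_q^{\sfrac12} \lambda_q = \ell^{2\alpha}$ built into \eqref{e:tau_def}, it contributes an extra factor $\ell^{\alpha}$ and is absorbed via Gr\"onwall's inequality. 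For the pressure gradient, Schauder estimates applied to the Poisson equation give
\[
\|\nabla(p_i - p_\ell)\|_{N+\alpha} \lesssim \|w \otimes v_i + v_\ell \otimes w + \mathring{R}_\ell\|_{N+1+\alpha},
\]
and expanding via H\"older products, together with \eqref{e:v_q_0}, Corollary~\ref{c:size:vi}, and \eqref{e:R:ell}, this self-consistently proves \eqref{e:pressure_1} once \eqref{e:z_diff_k} has been secured.

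With \eqref{e:z_diff_k} and \eqref{e:pressure_1} at hand, \eqref{e:z_D_t_lapl} follows immediately from the equation $L_{t,\ell,\gamma} w = -(w \cdot \nabla) v_i - \nabla(p_i - p_\ell) - \div \mathring{R}_\ell$, since each term on the right is already bounded by $\delta_{q+1}\ell^{-N-1+\alpha}$. For \eqref{e:z_D_t}, writing $D_{t,\ell} w = L_{t,\ell,\gamma} w - \nu(-\Delta)^\gamma w$ reduces the issue to controlling $\nu\|(-\Delta)^\gamma w\|_{N+\alpha}$; since $(-\Delta)^\gamma$ costs only $2\gamma < \sfrac23$ derivatives, $\nu < 1$, and the parameter relations \eqref{e:some_param_ineq} render the extra factor $\ell^{-2\gamma}$ harmless for $\gamma < \sfrac13$, this diffusive correction is of smaller order than the target scale. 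The main technical obstacle is exactly the joint bootstrap: Schauder naturally couples $\|\nabla(p_i - p_\ell)\|_{N+\alpha}$ to $\|w\|_{N+1+\alpha}$, one derivative above what is being controlled, creating an apparent derivative loss that must be closed by a simultaneous induction on $N$ with a priori hypotheses of the shape $\|w\|_{N+\alpha} \leq A\,\tau_q\,\delta_{q+1}\,\ell^{-N-1+\alpha}$, improved using the smallness provided by the CFL bound $2\tau_q \|v_\ell\|_{1+\alpha} \lesssim \ell^{\alpha}$.
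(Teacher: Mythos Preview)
Your overall strategy is correct and matches the paper's approach; the equation for $w=v_i-v_\ell$, the use of Proposition~\ref{p:transport_derivatives1} under the CFL condition, the Gr\"onwall absorption of $(w\cdot\nabla)v_i$, and the passage from \eqref{e:z_D_t_lapl} to \eqref{e:z_D_t} via $\tau_q\ell^{-2\gamma-2\alpha}\leq 1$ are all fine. The genuine gap is in the pressure step, and your proposed ``simultaneous induction on $N$'' does not close it.

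With your form of the Poisson equation, Schauder indeed gives
\[
\|\nabla(p_i-p_\ell)\|_{N+\alpha}\lesssim \|w\otimes v_i+v_\ell\otimes w+\mathring R_\ell\|_{N+1+\alpha},
\]
and the product rule forces a term $\|w\|_{N+1+\alpha}\bigl(\|v_i\|_0+\|v_\ell\|_0\bigr)\sim \|w\|_{N+1+\alpha}$, since $\|v_i\|_0,\|v_\ell\|_0\lesssim 1$. Feeding this into the transport estimate and normalising $X_N:=\sup_s\|w(s)\|_{N+\alpha}\,\ell^{\,N+1-\alpha}(\tau_q\delta_{q+1})^{-1}$, one obtains a recursion of the shape
\[
X_N\;\lesssim\;\frac{\tau_q}{\ell}\,X_{N+1}+1.
\]
But a direct computation from \eqref{e:ell_def} and \eqref{e:tau_def} gives $\tau_q\ell^{-1}=\ell^{2\alpha}\lambda_q^{3\alpha/2}\delta_{q+1}^{-1/2}\gg 1$, so the coupling goes the wrong way and no amount of CFL smallness (which only controls $\tau_q\|v_\ell\|_{1+\alpha}$, not $\tau_q\ell^{-1}$) can save it.

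The remedy, which the paper uses, is to exploit $\div v_i=\div v_\ell=0$ to rewrite the pressure equation as a \emph{single} divergence in which the derivative sits on $v_\ell,v_i$ rather than on $w$:
\[
\Delta(p_\ell-p_i)=\div\bigl(\nabla v_\ell\,(v_i-v_\ell)\bigr)+\div\bigl(\nabla v_i\,(v_i-v_\ell)\bigr)+\div\div\mathring R_\ell .
\]
Then $\nabla(-\Delta)^{-1}\div$ is order zero, and one gets
\[
\|\nabla(p_\ell-p_i)\|_{N+\alpha}\lesssim \bigl(\|v_\ell\|_{1+\alpha}+\|v_i\|_{1+\alpha}\bigr)\|w\|_{N+\alpha}
+\bigl(\|v_\ell\|_{N+1+\alpha}+\|v_i\|_{N+1+\alpha}\bigr)\|w\|_{\alpha}+\|\mathring R_\ell\|_{N+1+\alpha},
\]
which involves only $\|w\|_{N+\alpha}$. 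The coefficient $\|v\|_{1+\alpha}\sim\tau_q^{-1}\ell^{\alpha}$ is exactly what Gr\"onwall absorbs after integration over $[t_i,t]$, and the argument closes first for $N=0$, then for $N\geq 1$ by induction upward with no derivative loss. Once you make this change, the rest of your outline goes through as written.
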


\begin{proof}
Let us first consider \eqref{e:z_diff_k} with $N=0$.
From \eqref{e:euler_reynolds_l} and \eqref{e:vi:def} we have
\begin{equation}\label{e:z_diff_evo}
L_{t,\ell,\gamma}(v_\ell-v_i)
= (v_i - v_{\ell}) \cdot \nabla v_i - \nabla (p_{\ell} - p_i)+\div \mathring{R}_{\ell}.
\end{equation}
In particular, using
\begin{equation}\label{e:eqnpi}
\Delta (p_{\ell} - p_i) = \div\bigl(\nabla v_\ell(v_i-v_\ell)\bigr)+\div\bigl(\nabla v_i{(v_i-v_\ell)}\bigr)+\div\div\mathring{R}_{\ell},
\end{equation}
estimates \eqref{e:R:ell} and \eqref{e:z:N+alpha}, and Proposition~\ref{p:CZO_C_alpha} (recall that $\partial_{i} \partial_j (-\Delta)^{-1}$ is given by $\sfrac 13 \delta_{ij} ~+ $ a Calder{\'o}n-Zygmund operator), we conclude
\begin{equation*}
\norm{\nabla (p_{\ell} - p_i) (\cdot, t)}_{\alpha} \leq  \delta_q^{\sfrac12}\lambda_q\ell^{-\alpha}\norm{v_{i} - v_\ell}_{\alpha}+\delta_{q+1}\ell^{-1+\alpha}\,.
\end{equation*}
Thus, using \eqref{e:R:ell} and the definition of $\tau_q$, we have
\begin{equation}\label{e:z_D_t_step}
\norm{L_{t,\ell,\gamma} (v_{\ell} - v_i) }_{\alpha}\lesssim \delta_{q+1}\ell^{-1+\alpha}+\tau_q^{-1} \norm{v_{\ell} - v_i}_{\alpha}
\end{equation}
By applying  \eqref{e:trans_est_alpha} we obtain
\begin{align*}
\norm{ (v_{\ell} - v_i) (\cdot, t) }_{\alpha} \lesssim  \abs{t-t_i}\delta_{q+1}\ell^{-1+\alpha}+ \int_{t_i}^t\tau_q^{-1} \norm{(v_{\ell} - v_i)(\cdot,s)}_{\alpha} ~ds.
\end{align*}
Applying Gr\"onwall's inequality and using the assumption $0\leq(t-t_i)\leq 2\tau_q$  we obtain 
\begin{equation}\label{e:zdiff:N=0}
\norm{v_i-v_{\ell}}_{\alpha} \lesssim \tau_q\delta_{q+1}\ell^{-1+\alpha}\,,
\end{equation}
i.e. \eqref{e:z_diff_k} for the case $N=0$. Then as a consequence of \eqref{e:z_D_t_step} we obtain \eqref{e:z_D_t_lapl} for $N=0$.

Next, consider the case $N\geq 1$ and let $\theta$ be a multiindex with $|\theta|=N$. 
Commuting the derivative $\partial^\theta$ with the material derivative $\partial_t + v_{\ell} \cdot \nabla$ we have
\begin{align*}
\|L_{t,\ell,\gamma}\partial^\theta (v_\ell-v_i)\|_{\alpha}&\lesssim \|\partial^\theta L_{t,\ell,\gamma} (v_\ell-v_i)\|_{\alpha}+\|[v_\ell\cdot\nabla,\partial^\theta](v_\ell-v_i)\|_\alpha\\
&\lesssim  \|\partial^\theta L_{t,\ell,\gamma}(v_\ell-v_i)\|_{\alpha}+\|v_\ell\|_{N+\alpha}\|v_\ell-v_i\|_{1+\alpha}+\|v_\ell\|_{1+\alpha}\|v_\ell-v_i\|_{N+\alpha}\\
&\lesssim \|\partial^\theta L_{t,\ell,\gamma}(v_\ell-v_i)\|_{\alpha}+\|v_\ell\|_{N+1+\alpha}\|v_\ell-v_i\|_{\alpha}+\|v_\ell\|_{1+\alpha}\|v_\ell-v_i\|_{N+\alpha}\,,
\end{align*}
On the other hand
differentiating \eqref{e:z_diff_evo} leads to
\begin{align}
\|\partial^\theta L_{t,\ell,\gamma}(v_\ell-v_i)\|_{\alpha}&\lesssim \|v_\ell-v_i\|_{N+\alpha}\|v_i\|_{1+\alpha}+\|v_\ell-v_i\|_{\alpha}\|v_i\|_{N+1+\alpha}+\|p_\ell-p_i\|_{N+1+\alpha}+\|\mathring{R}_\ell\|_{N+1+\alpha}\notag\\
&\lesssim \tau_q^{-1} \|v_\ell-v_i\|_{N+\alpha}+\delta_{q+1}\ell^{-N-1+\alpha}+\|\nabla(p_\ell-p_i)\|_{N+\alpha}\label{e:dbetaDt}\,,
\end{align}
where we have used \eqref{e:zdiff:N=0}.
Furthermore, from \eqref{e:eqnpi} we also obtain, using Corollary \ref{c:size:vi} and \eqref{e:zdiff:N=0}
\begin{align}
\|\nabla(p_\ell-p_i)\|_{N+\alpha}&\lesssim (\|v_\ell\|_{N+1+\alpha}+\|v_i\|_{N+1+\alpha})\|v_\ell-v_i\|_{\alpha} \notag\\
&\qquad +(\|v_\ell\|_{1+\alpha}+\|v_i\|_{1+\alpha})\|v_\ell-v_i\|_{N+\alpha}+\|\mathring{R}_\ell\|_{N+1+\alpha}\notag\\
&\lesssim \delta_{q+1}\ell^{-N-1+\alpha}+\tau_q^{-1}\|v_\ell-v_i\|_{N+\alpha}\label{e:pressureNN}\,.
\end{align}
Summarizing, for any multiindex $\theta$ with $|\theta|=N$ we obtain
$$
\|L_{t,\ell,\gamma}\partial^\theta (v_\ell-v_i)\|_{\alpha}\lesssim \delta_{q+1}\ell^{-N-1+\alpha}+\tau_q^{-1}\|v_\ell-v_i\|_{N+\alpha}.
$$
Therefore, invoking once more  \eqref{e:trans_est_alpha} we deduce 
\begin{equation*}
  \|(v_\ell-v_i)(\cdot,t)\|_{N+\alpha}\lesssim \tau_q\delta_{q+1}\ell^{-N-1+\alpha}+\int_{t_i}^t\tau_q^{-1}\|(v_\ell-v_i)(\cdot,s)\|_{N+\alpha}\,ds	,
\end{equation*}
and hence, using Gr\"onwall's inequality and the assumption $0\leq(t-t_i)\leq 2\tau_q$  we obtain \eqref{e:z_diff_k}. From  \eqref{e:pressureNN} and \eqref{e:dbetaDt} we then also conclude \eqref{e:pressure_1} and \eqref{e:z_D_t_lapl}. We are only left with \eqref{e:z_D_t}. 

By Theorem \ref{lapla.holder} and estimate \eqref{e:z_diff_k} we have 
\[
\nu \| \gammalaplace (v_\ell - v_i)\|_{N+\alpha} \lesssim \| v_\ell - v_i\|_{N+2\gamma +2\alpha} \lesssim  \tau_q\delta_{q+1}\ell^{-N-1-2\gamma -\alpha}\, .
\]
If $a$ is chosen sufficiently large we can ensure $\ell ^{-1} \leq \lambda_{q+1}$ and, using \eqref{e:b_beta_rel}, we get 
\[
\tau_q \ell^{-2\gamma -2\alpha}\leq \frac{\lambda_{q+1}^{2\gamma}}{\delta_q^{\sfrac{1}{2}} \lambda_q} \leq 1\, 
\]
from which we deduce
\begin{equation}\label{ultima}
\nu \| \gammalaplace (v_\ell - v_i)\|_{N+\alpha} \lesssim \delta_{q+1}\ell^{-N-1 +\alpha}\, .
\end{equation}
Finally, combining \eqref{e:z_D_t_lapl}, \eqref{ultima} and triangular inequality, we get \eqref{e:z_D_t}
\end{proof}

Define the vector potentials to the solutions $v_i$ as
\begin{equation}\label{e:ziBidef}
z_i=\RRc v_i:=(-\Delta)^{-1}\curl v_i,
\end{equation}
where $\RRc$ is the Biot-Savart operator, so that
\begin{equation}\label{e:Biot_Savart}
\div z_i=0\qquad\textrm{ and }\qquad\curl z_i=v_i  - \int_{\T^3} v_i\, .
\end{equation}
Our aim is to obtain estimates for the differences $z_i-z_{i+1}$. 
\begin{proposition}[Estimates on vector potentials]\label{p:S_est}
For $0\leq(t-t_i) \leq 2\tau_q$, we have that
\begin{align}
\norm{z_i-z_{i+1}}_{N+\alpha} &\lesssim  \tau_q\delta_{q+1}\ell^{-N+\alpha}\,,   \label{e:z_diff} \\
\norm{D_{t,\ell} (z_i-z_{i+1})}_{N+\alpha} &\lesssim  \delta_{q+1}\ell^{-N+\alpha}\,. \label{e:z_diff_Dt}
\end{align}
\end{proposition}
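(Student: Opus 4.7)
The plan is to reduce both estimates to the stability bounds of Proposition \ref{p:vi:vell} via mapping properties of the Biot--Savart operator $\mathcal{B}=(-\Delta)^{-1}\curl$. For \eqref{e:z_diff}, write $z_i-z_{i+1}=\mathcal{B}(v_i-v_{i+1})$ and telescope as $v_i-v_{i+1}=(v_i-v_\ell)-(v_{i+1}-v_\ell)$. Since $\mathcal{B}$ is a classical Calder\'on--Zygmund operator of order $-1$, Schauder theory applied to the Poisson equation $-\Delta\phi=\curl u$ yields the gain-of-one-derivative bound $\|\mathcal{B} u\|_{N+\alpha}\lesssim \|u\|_{N-1+\alpha}$ for $N\geq 1$, together with $\|\mathcal{B} u\|_\alpha\lesssim\|u\|_\alpha$ by the same reasoning (observing that $\phi\in C^{1+\alpha}\subset C^\alpha$). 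Inserting the bound \eqref{e:z_diff_k} of Proposition \ref{p:vi:vell} immediately produces \eqref{e:z_diff}.

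For the material derivative \eqref{e:z_diff_Dt}, since $\mathcal{B}$ has a constant Fourier symbol it commutes with $\partial_t$, so adding and subtracting $\mathcal{B}(v_\ell\cdot\nabla(v_i-v_{i+1}))$ gives the identity
\[
D_{t,\ell}(z_i-z_{i+1})\;=\;\mathcal{B}\,D_{t,\ell}(v_i-v_{i+1})\;+\;[\,v_\ell\!\cdot\!\nabla,\,\mathcal{B}\,](v_i-v_{i+1}).
\]
The first summand is controlled by applying the Biot--Savart gain-of-one to the bound \eqref{e:z_D_t}, producing a contribution of size $\lesssim \delta_{q+1}\ell^{-N+\alpha}$. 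For the second, the commutator $[v_\ell\cdot\nabla,\mathcal{B}]$ is a pseudodifferential operator of order $-1$ (the order $+1$ transport factor is cancelled by the order $-1$ of $\mathcal{B}$ plus one derivative of commutator gain), and a standard Moser--Kato--Ponce-type tame estimate yields
\[
\|[v_\ell\!\cdot\!\nabla,\mathcal{B}]f\|_{N+\alpha}\;\lesssim\;\|v_\ell\|_{1+\alpha}\|f\|_{N-1+\alpha}+\|v_\ell\|_{N+\alpha}\|f\|_\alpha.
\]
Plugging $f=v_i-v_{i+1}$ and invoking \eqref{e:v:ell:k}, \eqref{e:z_diff_k}, together with the CFL-type identity $\tau_q\|v_\ell\|_{1+\alpha}\lesssim\ell^\alpha$ from \eqref{e:CFL}, both summands are dominated by $\delta_{q+1}\ell^{-N+\alpha}$ (the remainder term being subdominant by a further factor $\ell^{\alpha}$), giving \eqref{e:z_diff_Dt}.

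The main technical obstacle is the commutator estimate: although classical, it requires a careful verification that the derivative loss coming from the transport coefficient $v_\ell\cdot\nabla$ is precisely absorbed by the CFL gain of $\ell^\alpha$ built into the definition \eqref{e:tau_def} of $\tau_q$. The fractional dissipation causes no additional issue at the level of the vector potentials: the operator $(-\Delta)^\gamma$ commutes with $\mathcal{B}$, so its only imprint is already encoded in the stability bounds of Proposition \ref{p:vi:vell}, and in particular it does not appear as an obstruction in the identity above. Once the commutator estimate is in place, the rest of the argument is essentially bookkeeping based on interpolation of the mollification bounds \eqref{e:v:ell:k}.
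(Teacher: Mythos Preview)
Your approach works cleanly for $N\geq 1$, but it has a genuine gap at $N=0$, and this is precisely the case that matters most downstream (it feeds into the $C^\alpha$ bound \eqref{e:Rq:1} on the glued Reynolds stress). For $N\geq 1$ you correctly invoke the Schauder gain $\|\mathcal{B}u\|_{N+\alpha}\lesssim\|u\|_{N-1+\alpha}$, and inserting \eqref{e:z_diff_k} at order $N-1$ gives $\tau_q\delta_{q+1}\ell^{-N+\alpha}$ as required. But for $N=0$ you only claim $\|\mathcal{B}u\|_\alpha\lesssim\|u\|_\alpha$, which together with \eqref{e:z_diff_k} yields
\[
\|z_i-z_{i+1}\|_\alpha \;\lesssim\; \|v_i-v_{i+1}\|_\alpha \;\lesssim\; \tau_q\delta_{q+1}\ell^{-1+\alpha},
\]
one factor of $\ell$ worse than \eqref{e:z_diff}. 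There is no ``$C^{-1+\alpha}$'' bound on $v_i-v_\ell$ available from Proposition~\ref{p:vi:vell} to close this, so the Biot--Savart mapping properties alone cannot recover the missing gain. The same deficit propagates to your argument for \eqref{e:z_diff_Dt} at $N=0$: both the $\mathcal{B}D_{t,\ell}$ term and the commutator term inherit a spurious $\ell^{-1}$.

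The paper gets around this by \emph{not} treating $\mathcal{B}$ as a black box on $v_i-v_\ell$. Instead it derives a transport-type equation directly for $\tilde z_i:=\mathcal{B}(v_i-v_\ell)$: using the identities $v_\ell\cdot\nabla(v_i-v_\ell)=\curl((v_\ell\cdot\nabla)\tilde z_i)+\div((\tilde z_i\times\nabla)v_\ell)$ and $(v_i-v_\ell)\cdot\nabla v_i=\div((\tilde z_i\times\nabla)v_i^T)$, one takes $\curl$ of the resulting equation and inverts the Laplacian to obtain
\[
L_{t,\ell,\gamma}\tilde z_i \;=\; (-\Delta)^{-1}F,
\]
with $F$ built from $\nabla\div$ and $\curl\div$ applied to \emph{zeroth-order} quantities in $\tilde z_i$, $\nabla v_\ell$, $\nabla v_i$ and, crucially, to $\mathring R_\ell$ rather than $\div\mathring R_\ell$. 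This is where the extra factor of $\ell$ is recovered: the source contributes $\|\mathring R_\ell\|_\alpha\lesssim\delta_{q+1}\ell^\alpha$ instead of $\|\div\mathring R_\ell\|_\alpha\lesssim\delta_{q+1}\ell^{-1+\alpha}$. One then applies the transport estimate \eqref{e:trans_est_alpha} and Gr\"onwall to close $\|\tilde z_i\|_\alpha\lesssim\tau_q\delta_{q+1}\ell^\alpha$. The material-derivative bound \eqref{e:z_diff_Dt} follows by feeding this back into the equation and handling the $\nu(-\Delta)^\gamma\tilde z_i$ term exactly as in the proof of \eqref{e:z_D_t}. In short, your commutator decomposition is a reasonable heuristic for $N\geq 1$, but the $N=0$ estimate genuinely requires exploiting the divergence structure of the source at the level of the potential, which your black-box reduction to Proposition~\ref{p:vi:vell} does not see.
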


\begin{proof}
Set $\tilde z_i := \RRc (v_i-v_{\ell})$ and observe that
$z_{i}-z_{i+1}=\tilde z_i-\tilde z_{i+1}$. Hence, it suffices to estimate $\tilde z_i$ in place of $z_i-z_{i+1}$.

The estimate on $\norm{\nabla \tilde z_i}_{N-1+\alpha}$ for $N\geq 1$ follows directly from \eqref{e:z_diff_k} and the fact that $\nabla \RRc$ is a bounded operator on H\"older spaces:
\begin{align}\label{e:N>=1}
\norm{\nabla \tilde z_i}_{N-1+\alpha}\leq& \norm{\nabla \RRc (v_i-v_{\ell})}_{N-1+\alpha} \norm{v_i-v_{\ell}}_{N+\alpha}
\lesssim  \tau_q\delta_{q+1}\ell^{-N+\alpha} ~.
\end{align}
Next, observe that 
\begin{equation}\label{e:eqn-vivell}
\partial_t(v_i-v_\ell)+v_\ell\cdot\nabla(v_i-v_\ell)+(v_i-v_\ell)\cdot\nabla v_i+\nabla (p_i-p_\ell)+\nu \gammalaplace(v_i-v_\ell)+\div\mathring{R}_\ell=0.	
\end{equation}
Since $v_i-v_\ell=\curl\tilde z_i$ with $\div\tilde z_i=0$, 
we have\footnote{Here we use the notation $[(z\times\nabla)v]^{ij}=\epsilon_{ikl} z^k\partial_lv^j$ for vector fields $z,v$.}
\begin{align*}
 v_\ell\cdot\nabla(v_i-v_{\ell}) &=\curl\bigl((v_\ell\cdot\nabla)\tilde z_i\bigr)+\div\bigl((\tilde z_i\times \nabla)v_\ell\bigr)\\
	((v_i-v_\ell)\cdot\nabla) v_i &=\div\bigl((\tilde z_i\times \nabla)v_i^T\bigr),
\end{align*}
so that we can write \eqref{e:eqn-vivell} as
\begin{equation}\label{e:curlz_i-eqn}
\curl(\partial_t\tilde z_i+(v_\ell\cdot\nabla)\tilde z_i+\nu \gammalaplace\tilde z_i)=-\div\bigl((\tilde z_i\times\nabla)v_\ell+(\tilde z_i\times\nabla)v_i^T\bigr)-\nabla(p_i-p_\ell)-\div\mathring{R}_\ell.	
\end{equation}
Taking the curl of \eqref{e:curlz_i-eqn} the pressure term drops out. Using in addition that $\div\tilde z_i=\div (v_i-v_\ell) =0$ and the identity 
$\curl\curl =-\Delta+\nabla\div$, we then arrive at
\begin{equation*}
	-\Delta\bigl(\partial_t\tilde z_i+(v_\ell\cdot\nabla)\tilde z_i+\nu \gammalaplace\tilde z_i\bigr)=F,
\end{equation*}
where 
$$
F=-\nabla\div\left((\tilde z_i\cdot \nabla)v_\ell\right)-\curl\div\left((\tilde z_i\times\nabla)v_\ell+(\tilde z_i\times\nabla)v_i^T\right)-\curl\div\mathring{R}_\ell.\footnote{In deriving the latter equality we have used the identity $\nabla \div ((v_\ell \cdot \nabla) \tilde{z}_i) = \nabla \div ((\tilde{z}_i \cdot \nabla) v_\ell)$, which follows easily from the fact that both $v_\ell$ and $\tilde{z}_i$ are divergence free.}
$$
Consequently, 
\begin{align}
	\|\partial_t\tilde z_i+(v_\ell\cdot\nabla)\tilde z_i+\nu \gammalaplace\tilde z_i\|_{N+\alpha} \lesssim \; &(\|v_i\|_{N+1+\alpha}+\|v_\ell\|_{N+1+\alpha})\|\tilde z_i\|_\alpha\notag\\
	&+(\|v_i\|_{1+\alpha}+\|v_\ell\|_{1+\alpha})\|\tilde z_i\|_{N+\alpha}+\|\mathring{R}_\ell\|_{N+\alpha}\notag\\
	\lesssim\; & \tau_q^{-1}\|\tilde z_i\|_{N+\alpha}+\tau_q^{-1}\ell^{-N}\|\tilde z_i\|_\alpha+\delta_{q+1}\ell^{-N+\alpha}.
	\label{e:not_yet_commuted}
\end{align}
Setting $N=0$ and using \eqref{e:trans_est_alpha} and Gr\"onwall's inequality we obtain
$$
\|\tilde z_i\|_{\alpha}\lesssim \tau_q \delta_{q+1}\ell^{\alpha}\, ,
$$
which together with \eqref{e:N>=1} gives \eqref{e:z_diff}. 
Using \eqref{e:z_diff} into \eqref{e:not_yet_commuted} we get
$$
\|\partial_t\tilde z_i+(v_\ell\cdot\nabla)\tilde z_i+\nu \gammalaplace\tilde z_i\|_{N+\alpha} \lesssim \delta_{q+1}\ell^{-N+\alpha}\, .
$$
Thus we conclude
\begin{align*}
\|\partial_t\tilde z_i+(v_\ell\cdot\nabla)\tilde z_i\|_{N+\alpha} & \lesssim \delta_{q+1}\ell^{-N+\alpha}+ \|\gammalaplace\tilde z_i\|_{N+\alpha}\lesssim \delta_{q+1}\ell^{-N+\alpha}+\| \tilde z_i\|_{N+2\gamma+2\alpha} \\
&\lesssim \delta_{q+1}\ell^{-N+\alpha}\bigr( 1+ \tau_q\ell^{-2\gamma-2\alpha} \bigr)\leq \delta_{q+1}\ell^{-N+\alpha}\,.
\end{align*}

\end{proof}

Proceding as in \cite{BDLSV2017}, we now  glue the solutions $v_i$ together in order to construct $\overline v_q$. Let
\begin{align*}
& t_i=i\tau_q,\qquad I_i=[t_{i+1}+\tfrac{1}{3}\tau_q,t_{i+1}+\tfrac{2}{3}\tau_q]\cap [0,T],\\
&J_0=[0,t_1+\tfrac{1}{3}\tau_q)  ,\qquad   J_i=(t_{i+1}-\tfrac{1}{3}\tau_q,t_{i+1}+\tfrac{1}{3}\tau_q)\cap [0,T]\quad i\geq 1 \, .
\end{align*}
Note that $\{I_i,J_i\}_i$ is a decomposition of $[0,T]$ into pairwise disjoint intervals.  Note also that this definitions of $J_i, I_i$ is slightly different from the one used in \cite{BDLSV2017}. The reason is that our stability estimates for smooth solutions of the fractional Navier-Stokes equations hold for $0\leq t-t_i\leq \tau_q$ as opposed to $|t-t_i|\leq \tau_q$ in \cite{BDLSV2017}.

We define a partition of unity $\{\chi_i\}_i$ in time with the following properties:
\begin{itemize}
\item The cut-offs form a partition of unity
\begin{equation}
\sum_i \chi_i \equiv 1
\label{e:chi:partition}
\end{equation}
\item $\supp \chi_i\cap \supp \chi_{i+2}=\emptyset$ and moreover
\begin{align}\label{e:chi:time:width}
\supp \chi_0&\subset [0,t_1+\tfrac{2}{3} \tau_q) \nonumber \\
\supp \chi_i&\subset I_{i-1} \cup J_i \cup I_i \\
\chi_i(t)&=1\quad\textrm{ for }t\in J_i
\end{align}
\item For any $i$ and $N$ we have
\begin{equation}
\norm{\partial_t^N \chi_i}_0 \lesssim \tau_q^{-N} \label{e:dt:chi}\,.
\end{equation}
\end{itemize}


We define
\begin{align*}
\overline v_q&=\sum_i \chi_i v_i\\
\overline p_q^{(1)}&=\sum_i \chi_i p_i
\end{align*}
Observe that $\div \overline v_q=0$. Furthermore, if $t\in I_i$, then $\chi_i+\chi_{i+1}=1$ and $\chi_j=0$ for $j\neq i,i+1$, therefore
on $I_i$:
\begin{align*}
\overline v_q&=\chi_i v_i+(1-\chi_i) v_{i+1}\\
\overline p_q^{(1)}&=\chi_i p_i+(1-\chi_i) p_{i+1}
\end{align*}
and
\begin{align*}
\partial_t\overline v_q+\div(\overline v_q\otimes \overline v_q)+\nabla\overline p_q^{(1)}+\nu \gammalaplace \overline v_q=\partial_t\chi_i(v_i-v_{i+1})-\chi_i(1-\chi_i)\div\left((v_i-v_{i+1})\otimes (v_i-v_{i+1})\right).
\end{align*}
On the other hand, if $t\in J_i$ then $\chi_i=1$ and $\chi_j(\tilde t)=0$ for all $j\neq i$ for all $\tilde t$ sufficiently close to $t$ (since $J_i$ is open). Then for all $t\in J_i$ we have
$$
\overline v_q=v_i,\quad \overline p_q^{(1)}=p_i, 
$$
and, from \eqref{e:vi:def}, 
$$
\partial_t\overline v_q+\div(\overline v_q\otimes \overline v_q)+\nabla\overline p_q^{(1)}+\gammalaplace \overline v_q=0.
$$

In order to define the new Reynolds tensor, we recall the operator $\mathcal R$ from \cite{DlSz2013}, which
can be thought of as an ``inverse divergence'' operator for symmetric tracefree 2-tensors. The operator is defined as
\begin{equation}
\label{e:R:def}
\begin{split}
({\mathcal R} f)^{ij} &= {\mathcal R}^{ijk} f^k \\
{\mathcal R}^{ijk} &= - \frac 12 \Delta^{-2} \partial_i \partial_j \partial_k - \frac 12 \Delta^{-1} \partial_k \delta_{ij} +  \Delta^{-1} \partial_i \delta_{jk} +  \Delta^{-1} \partial_j \delta_{ik}.
\end{split}
\end{equation}
when acting on vectors $f$ with zero mean on $\T^3$ and has the propery that $\mathcal R f$ is symmetric and $\div ( {\mathcal R}  f) = f$.

Thus we define
\begin{align*}
\mathring{\overline{R}}_q&=\partial_t\chi_i\mathcal{R}(v_i-v_{i+1})-\chi_i(1-\chi_i)(v_i-v_{i+1})\mathring{\otimes} (v_i-v_{i+1})\\
\overline{p}_q^{(2)}&=-\chi_i(1-\chi_i)\left(|v_i-v_{i+1}|^2-\int_{\T^3}|v_i-v_{i+1}|^2\,dx\right),
\end{align*}
for $t\in I_i$ and $\mathring{\overline{R}}_q=0$, $\overline{p}_q^{(2)}=0$ for $t\notin\bigcup_{i}I_i$.

Furthermore, we set
$$
\overline{p}_q=\overline{p}_q^{(1)}+\overline{p}_q^{(2)}
$$
It follows from the preceding discussion and the definition of the operator $\mathcal{R}$  that
\begin{itemize}
\item $\mathring{\overline{R}}_q$ is a smooth symmetric and traceless 2-tensor;
\item For all $(x,t)\in \T^3\times [0,T]$
\begin{equation*}
\left\{\begin{array}{l}
\partial_t\overline{v}_q+\div(\overline{v}_q\otimes\overline{v}_q)+\nabla \overline{p}_q +\nu \gammalaplace \overline v_q=\div \mathring{\overline{R}}_q,\\ \\
\div \overline{v}_q =0;
\end{array}\right.
\end{equation*}
\item $\supp\mathring{\overline{R}}_q\subset \T^3\times \bigcup_iI_i$.
\end{itemize}

Next, we estimate the various H\"older norms of $\overline{v}_q$ and $\mathring{\overline{R}}_q$. 

\begin{proposition}[Estimates on \texorpdfstring{$\overline{v}_q$}{overline vq} and $\mathring{\overline{R}}_q$]\label{p:vq:vell}
The velocity field $\overline v_q$ and the new Reynolds stress tensor $\mathring{\overline{R}}_q$ satisfy the following estimates
\begin{align}
\norm{\bar v_q - v_{\ell}}_{\alpha} &\lesssim \delta_{q+1}^{\sfrac12}\ell^{\alpha} \label{e:vq:vell} \\
\norm{\overline{v}_q-v_\ell}_{N+\alpha} &\lesssim \tau_q\delta_{q+1}\ell^{-1-N+\alpha} \label{e:vq:vell:additional} \\
\norm{\bar v_q}_{1+N} &\lesssim \delta_{q}^{\sfrac12} \lambda_q \ell^{-N}\label{e:vq:1} \\
\norm{\mathring{\overline R_q}}_{N+\alpha} &\lesssim \delta_{q+1}\ell^{-N+\alpha} \label{e:Rq:1}\\
\norm{(\partial_t + \overline v_q\cdot \nabla) \mathring{\overline R_q}}_{N+\alpha} &\lesssim \delta_{q+1}\delta_q^{\sfrac12}\lambda_q\ell^{-N-\alpha}. \label{e:Rq:Dt}
\end{align}
for all $N \geq 0$. Moreover the difference of the energies of $\overline v_q$ and $v_\ell$ satisfies
\label{p:glued:energy}
\begin{align}
\left| \int_{\T^3} |\bar v_q|^2 - |v_\ell |^2 dx \right| \lesssim \delta_{q+1}\ell^\alpha \,.\label{e:voverline_vell_energy_diff}
\end{align} 
\end{proposition}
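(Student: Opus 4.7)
The plan is to decompose every quantity via the partition of unity $\{\chi_i\}$ and then invoke the stability bounds of Propositions~\ref{p:vi:vell} and \ref{p:S_est}, together with the parameter inequalities \eqref{e:some_param_ineq} and \eqref{e:compare-lambda-ell}. For the velocity bounds I use $\bar v_q - v_\ell = \sum_i \chi_i(v_i - v_\ell)$ (from $\sum_i\chi_i \equiv 1$): since at most two cutoffs are simultaneously active, \eqref{e:vq:vell:additional} follows directly from \eqref{e:z_diff_k}, while \eqref{e:vq:vell} is the case $N=0$ of that bound combined with the elementary parameter estimate $\tau_q \delta_{q+1}^{\sfrac 12}\ell^{-1} \lesssim \ell^{2\alpha}\lambda_q^{\sfrac{3\alpha}{2}} \lesssim 1$. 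Estimate \eqref{e:vq:1} then follows by triangle inequality from \eqref{e:v:ell:k} and \eqref{e:vq:vell:additional}, the correction being of lower order by \eqref{e:some_param_ineq}.

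For the Reynolds stress \eqref{e:Rq:1} I work on $I_i$, where the two-term formula for $\mathring{\overline R}_q$ is explicit. The quadratic piece is controlled by the tame product inequality: via \eqref{e:z_diff_k} it is $\lesssim \tau_q^2\delta_{q+1}^2\ell^{-N-2+2\alpha}$, and the prefactor $\tau_q^2\delta_{q+1}\ell^{-2+\alpha}$ is $\lesssim \lambda_q^{-\alpha}$ by \eqref{e:ell_def}--\eqref{e:tau_def}, hence negligible relative to the target $\delta_{q+1}\ell^{-N+\alpha}$. The first piece $\partial_t\chi_i\,\mathcal R(v_i - v_{i+1})$ is the crucial one: since $v_i - v_{i+1}$ is divergence-free and mean-zero, the formula \eqref{e:R:def} collapses to a combination of operators bounded on H\"older spaces, so $\|\mathcal R(v_i - v_{i+1})\|_{N+\alpha}$ is size-comparable to $\|z_i - z_{i+1}\|_{N+\alpha}$; the gain $\tau_q$ in \eqref{e:z_diff} then exactly compensates the $\tau_q^{-1}$ loss from \eqref{e:dt:chi}, producing the desired $\delta_{q+1}\ell^{-N+\alpha}$ bound. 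For \eqref{e:Rq:Dt} I split $\partial_t + \bar v_q\cdot\nabla = D_{t,\ell} + (\bar v_q - v_\ell)\cdot\nabla$: the $D_{t,\ell}$ contribution is treated by Leibniz with \eqref{e:z_D_t} and \eqref{e:z_diff_Dt} (with a second $\partial_t\chi_i$ contributing another $\tau_q^{-1}$ factor), while the transport commutator is absorbed via \eqref{e:vq:vell:additional} against the $C^{1+\alpha}$ bound for $\mathring{\overline R}_q$ from \eqref{e:Rq:1}.

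The main obstacle is the energy identity \eqref{e:voverline_vell_energy_diff}, since a direct $L^2$ estimate on $\bar v_q - v_\ell$ fails to control the cross term $\int v_\ell\cdot(\bar v_q - v_\ell)\,dx$. Following the strategy of \cite{BDLSV2017} adapted to the dissipative setting, on $J_i$ we have $\bar v_q = v_i$ with $v_i(\cdot,t_i) = v_\ell(\cdot,t_i)$, so testing \eqref{e:vi:def} against $v_i$ and \eqref{e:euler_reynolds_l} against $v_\ell$ yields
\[
\frac{d}{dt}\int_{\T^3}\bigl(|v_i|^2 - |v_\ell|^2\bigr)\,dx = -2\nu\int_{\T^3}\bigl(|(-\Delta)^{\sfrac{\gamma}{2}}v_i|^2 - |(-\Delta)^{\sfrac{\gamma}{2}}v_\ell|^2\bigr)\,dx + 2\int_{\T^3}\mathring R_\ell : \nabla v_\ell\,dx.
\]
Integrating from $t_i$ to $t$, the Reynolds contribution is bounded by $\tau_q\|\mathring R_\ell\|_0\|\nabla v_\ell\|_0 \lesssim \delta_{q+1}\ell^{3\alpha}$ via \eqref{e:R:ell} and \eqref{e:v:ell:k}. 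The dissipative defect is the genuinely new ingredient relative to \cite{BDLSV2017}: interpolation $[v]_\gamma\lesssim \|v\|_0^{1-\gamma}\|v\|_1^\gamma$, Corollary~\ref{c:size:vi}, the smallness of $\nu$ arising from the rescaling in the proof of Theorem~\ref{assegnocinetica}, and the constraint \eqref{e:b_beta_rel} together absorb it into $\delta_{q+1}\ell^\alpha$. For $t\in I_i$ one finally expands $|\bar v_q|^2 = \chi_i^2|v_i|^2 + (1-\chi_i)^2|v_{i+1}|^2 + 2\chi_i(1-\chi_i)v_i\cdot v_{i+1}$ and applies the same analysis to each piece, the additional cross-term being controlled via \eqref{e:vq:vell}.
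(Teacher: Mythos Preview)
Your treatment of \eqref{e:vq:vell}--\eqref{e:Rq:Dt} is correct and matches the paper's approach (which itself defers to \cite{BDLSV2017}); the derivation of the energy identity and the Reynolds contribution for \eqref{e:voverline_vell_energy_diff} is also the same as in the paper.

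The gap lies in how you bound the dissipative defect
\[
\nu\int_{\T^3}\bigl(|(-\Delta)^{\sfrac{\gamma}{2}}v_i|^2-|(-\Delta)^{\sfrac{\gamma}{2}}v_\ell|^2\bigr)\,dx\,.
\]
The tools you invoke (interpolation $[v]_\gamma\lesssim\|v\|_0^{1-\gamma}\|v\|_1^\gamma$, Corollary~\ref{c:size:vi}, smallness of $\nu$) estimate each of the two energy terms \emph{separately} by roughly $(\delta_q^{\sfrac12}\lambda_q)^{2\gamma}$, giving after time integration $\nu\tau_q(\delta_q^{\sfrac12}\lambda_q)^{2\gamma}\approx\nu(\delta_q^{\sfrac12}\lambda_q)^{2\gamma-1}$. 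Requiring this to be $\lesssim\delta_{q+1}$ forces $(1-\beta)(1-2\gamma)\geq 2\beta b$, which for $\gamma$ close to $\sfrac13$ and $\gamma<\beta$ gives $b<1$, contradicting $b>1$. The smallness of $\nu=\delta_1^{\sfrac12}$ is a red herring: it is a fixed constant independent of $q$ and cannot fix the $q$-dependent scaling mismatch.

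The paper's argument is different: it factors $|a|^2-|b|^2=(a+b)\cdot(a-b)$, uses Cauchy--Schwarz, bounds the factor $\|(-\Delta)^{\sfrac{\gamma}{2}}(v_i+v_\ell)\|_{L^2}$ via the uniform inductive estimate $\|v_q\|_\gamma\lesssim 1$ (valid because $\gamma<\beta$), and controls the \emph{difference} through $\|v_i-v_\ell\|_{\gamma+\alpha}\lesssim\tau_q\delta_{q+1}\ell^{-1-\gamma}$ from \eqref{e:z_diff_k}. The restriction $\gamma<\sfrac13$ then enters via $\tau_q\ell^{-1-\gamma}\lesssim\tau_q^{-1}\ell^\alpha$, producing the correct $\delta_{q+1}\ell^\alpha$ after integrating over $[t_i,t]$. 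You never invoke \eqref{e:z_diff_k} at this step, and that is the missing idea.

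A secondary issue: on $I_i$ you propose to control the cross-term $2\chi_i(1-\chi_i)v_i\cdot v_{i+1}$ via \eqref{e:vq:vell}, but that estimate yields only a $\delta_{q+1}^{\sfrac12}$ bound, not $\delta_{q+1}$. The paper instead uses the algebraic identity
\[
|\bar v_q|^2-|v_\ell|^2=\chi_i\bigl(|v_i|^2-|v_\ell|^2\bigr)+(1-\chi_i)\bigl(|v_{i+1}|^2-|v_\ell|^2\bigr)-\chi_i(1-\chi_i)|v_i-v_{i+1}|^2\,,
\]
which reduces everything to the energy differences already handled and the squared term $\int|v_i-v_{i+1}|^2\,dx\lesssim\tau_q^2\delta_{q+1}^2\ell^{-2+2\alpha}\lesssim\delta_{q+1}\ell^{2\alpha}$.
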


\begin{proof}
The estimates \eqref{e:vq:vell}--\eqref{e:Rq:Dt} are consequence of Propositions \ref{p:vi:vell} and \ref{e:ziBidef} (the proof can be found in \cite{BDLSV2017}). However we prove explicitly  \eqref{e:voverline_vell_energy_diff} since it involves the structure of the dissipative term.

Observe that for $t\in I_i$
\begin{align*}
 \overline v_q \otimes \overline v_q&=(\chi_iv_i+(1-\chi_i)v_{i+1})\otimes(\chi_iv_i+(1-\chi_i)v_{i+1})\\
 &=\chi_iv_i\otimes v_i+(1-\chi_i)v_{i+1}\otimes v_{i+1}-\chi_i(1-\chi_i)(v_i-v_{i+1})\otimes (v_i-v_{i+1}),
 \end{align*}
 so that, taking the trace:
 \begin{align*}
|\overline v_q|^2-|v_\ell|^2=\chi_i(|v_i|^2-|v_\ell|^2)+(1-\chi_i)(|v_{i+1}|^2-|v_\ell|^2)-\chi_i(1-\chi_i)|v_i-v_{i+1}|^2
\end{align*}
Next, recall that $v_i$ and $v_\ell$ are smooth solutions of \eqref{e:vi:def} and \eqref{e:euler_reynolds_l} respectively, therefore
\begin{align*}
\left|\frac{d}{dt}\int_{\T^3}|v_i|^2-|v_\ell|^2\,dx\right|=2 \left|\int_{\T^3}\nabla v_\ell:\mathring{R}_\ell\,dx\right| +2 \nu \left|\int_{\T^3}\bigr(|(-\Delta)^{\sfrac{\gamma}{2}}v_i|^2-|(-\Delta)^{\sfrac{\gamma}{2}}v_\ell|^2\bigr)\,dx\right|\, .
\end{align*}
Using \eqref{e:R:ell} and \eqref{e:z:N+alpha}, we estimate
\[
\left|\int_{\T^3}\nabla v_\ell:\mathring{R}_\ell\,dx\right|\lesssim \|\nabla v_\ell\|_0 \|\mathring{R}_\ell\|_0\lesssim \delta_q^{\sfrac{1}{2}} \lambda_q \delta_{q+1} \lesssim\tau_q^{-1}\delta_{q+1}\ell^\alpha\, .
\]
Moreover, since $\|v_q\|_\gamma \leq 1$ for every $\gamma <\beta$ (as already exploited in the proof of Proposition \ref{p:main}), by \eqref{e:z_diff_k}, Theorem \ref{lapla.holder} and Cauchy-Schwarz inequality we have
\[
\left|\int_{\T^3}\bigr(|(-\Delta)^{\sfrac{\gamma}{2}}v_i|^2-|(-\Delta)^{\sfrac{\gamma}{2}}v_\ell|^2\bigr)\,dx\right|\lesssim \|v_i-v_\ell\|_{\gamma+\alpha}\lesssim \tau_q \delta_{q+1} \ell^{-1-\gamma}\lesssim \tau_q^{-1} \delta_{q+1} \ell^{\alpha}\, ,
\]
where in the last inequality (remember the restriction $\gamma <\sfrac{1}{3}$) we have used 
\[
 \ell^{-1-\gamma}\leq \ell^{-\sfrac{4}{3}}\overset{\eqref{e:ell_def}}{=}\frac{\big(\delta_q^{\sfrac{1}{2}}\lambda_q\big)^{\sfrac{4}{3}}}{\delta_{q+1}^{\sfrac{2}{3}}}\lambda_q^{2\alpha} \overset{\eqref{e:tau_def}}{=}\tau_q^{-2} \ell^\alpha \ell^{\sfrac{5\alpha}{3}}\lambda_q^{2\alpha} \big(\tau_q \delta_{q+1}^{-1}\big)^{\sfrac{2}{3}}\overset{\eqref{e:compare-lambda-ell} }{\leq}\tau_q^{-2} \ell^\alpha\,.
 \]
Moreover, $v_i=v_\ell$ for $t=t_i$. Therefore, after integrating in time we deduce
$$
\left|\int_{\T^3}|v_i|^2-|v_\ell|^2\,dx\right|\lesssim \delta_{q+1}\ell^\alpha.
$$
Furthermore, using \eqref{e:z_diff_k} and $\delta_{q+1}^{\sfrac{1}{2}} \tau_q \ell^{-1}=\ell^{2\alpha} \lambda_q^{\sfrac{3\alpha}{2}} \overset{\eqref{e:compare-lambda-ell} }{\leq} \lambda_q^{-\sfrac{\alpha}{2}}\leq 1$
$$
\int_{\T^3}|v_i-v_{i+1}|^2\,dx\lesssim \|v_i-v_{i+1}\|_\alpha^2\lesssim \tau_q^2\delta_{q+1}^2\ell^{-2+2\alpha} \lesssim \delta_{q+1}\ell^{2\alpha},
$$
Therefore 
$$
\left| \int |\bar v_q|^2 - |v_\ell |^2 dx \right|\lesssim \delta_{q+1}\ell^{\alpha},
$$
which concludes the proof.

\end{proof}

\subsection{Perturbation and Mikado flows}\label{s:perturbation_outline}

We will now outline the construction of the perturbation $w_{q+1}$, where 
\[
v_{q+1}:= w_{q+1} + \overline v_q \, .
\] 
The perturbation $w_{q+1}$ is highly oscillatory and will be based on the Mikado flows introduced in \cite{DaSz2016}.

First of all note that as a corollary of \eqref{e:energy_inductive_assumption}, \eqref{e:vq_vell_energy_diff} and \eqref{e:voverline_vell_energy_diff}, by choosing $a$ sufficiently large we can ensure that
\begin{equation}\label{e:voverline_energy_error}
\frac{\delta_{q+1}}{2\lambda_q^{\alpha}}\le e(t)-\int_{\T^3}\abs{\overline v_q}^2\,dx\leq 2\delta_{q+1}\,.
\end{equation}
Starting with the solution $(\overline{v}_q,\overline{p}_q,\mathring{\overline R_q})$, we then produce a new solution $(v_{q+1},p_{q+1},\mathring{R}_{q+1})$ of the Navier-Stokes Reynolds system \eqref{NSR} with estimates
\begin{align}
\|v_{q+1}-\overline v_q\|_0 + \lambda_{q+1}^{-1} \|v_{q+1}-\overline v_q\|_1&\leq \frac{M}{2}\delta_{q+1}^{\sfrac12}\label{e:outline_v_diff}\\
\|\mathring{R}_{q+1}\|_\alpha &\lesssim \frac{\delta_{q+1}^{\sfrac12}\delta_q^{\sfrac12}\lambda_q}{\lambda_{q+1}^{1-4\alpha}}\,.\label{e:outline_R_est}\\
    \abs{e(t)-\int_{\T^3}\abs{v_{q+1}}^2\,dx-\frac{\delta_{q+2}}2 } &\lesssim \frac{\delta_q^{\sfrac12}\delta_{q+1}^{\sfrac12}\lambda_q^{1+ 2\alpha}}{\lambda_{q+1}}\, ,\label{e:outline_energy_diff}
\end{align}
cf. Propositions \ref{p:perturbation}, \ref{p:energy} and \ref{p:R_q+1}.

Then Proposition \ref{p:main} is just a consequence of estimates \eqref{e:outline_v_diff}-\eqref{e:outline_energy_diff}, Proposition \ref{p:est_mollification} and Proposition \ref{p:vq:vell} (again, a detailed proof can be found in \cite{BDLSV2017}).

We now recall the construction of Mikado flows given in \cite{DaSz2016}.

\begin{lemma}\label{l:Mikado}For any compact subset $\mathcal N\subset\subset \S^{3\times3}_+$
there exists a smooth vector field 
$$
W:\mathcal N\times \T^3 \to \R^3, 
$$
such that, for every $R\in\mathcal N$ 
\begin{equation}\label{e:Mikado}
\left\{\begin{aligned}
\div_\xi(W(R,\xi)\otimes W(R,\xi))&=0 \\ \\
\div_\xi W(R,\xi)&=0,
\end{aligned}\right.
\end{equation}
and
\begin{eqnarray}
	\fint_{\T^3} W(R,\xi)\,d\xi&=&0,\label{e:MikadoW}\\
    \fint_{\T^3} W(R,\xi)\otimes W(R,\xi)\,d\xi&=&R.\label{e:MikadoWW}
\end{eqnarray}
\end{lemma}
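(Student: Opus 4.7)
The plan is to build $W$ as a superposition of ``pipe flows'' oriented along finitely many rational directions, with intensities depending smoothly on $R$. The key algebraic fact is that every positive definite symmetric matrix can be written as a positive combination of rank-one matrices $\xi\otimes\xi$ with $\xi$ belonging to a fixed finite subset of $\mathbb{S}^2\cap\mathbb{Q}^3$, and on a compact neighborhood of such a matrix the weights can be chosen smoothly. Each rank-one contribution will then be realized by a divergence-free flow parallel to the chosen rational direction, supported in a thin periodic tube, such that distinct tubes are disjoint and therefore their flows do not interact.

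First I would establish the following geometric decomposition: for any compact $\mathcal N\subset\subset\mathcal S^{3\times 3}_+$ there exist finitely many directions $\xi_1,\dots,\xi_N\in\mathbb S^2\cap \mathbb Q^3$ and smooth non-negative functions $\gamma_1,\dots,\gamma_N\in C^\infty(\mathcal N)$ such that
\begin{equation*}
R=\sum_{k=1}^N \gamma_k(R)^2\, \xi_k\otimes\xi_k\qquad \text{for all }R\in\mathcal N.
\end{equation*}
This is the standard ``Nash'' lemma: the matrices $\xi\otimes\xi$ with $\xi\in\mathbb S^2\cap\mathbb Q^3$ span the convex cone $\mathcal S^{3\times 3}_+$ in the sense that their interior in this cone contains any compact set, so a smooth partition of unity yields the $\gamma_k$.

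Next I would construct, for each rational direction $\xi_k$, a scalar profile $\phi_k\in C^\infty(\T^3)$ depending only on the variables perpendicular to $\xi_k$ (which makes sense precisely because $\xi_k$ has rational coordinates, so $\mathbb R\xi_k$ projects to a closed subgroup of $\T^3$ and the quotient $\T^3/\mathbb R\xi_k$ is a two-torus). I would choose $\phi_k$ supported in a small periodic tube around $\mathbb R\xi_k$, normalized so that $\fint_{\T^3}\phi_k\,d\xi=0$ and $\fint_{\T^3}\phi_k^2\,d\xi=1$, and so that the tubes $\supp\phi_k$ are pairwise disjoint (this can be arranged by translating the tubes, since the $\xi_k$ are finitely many and tubes can be taken arbitrarily thin). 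Because $\phi_k$ is constant along $\xi_k$, the vector field $\phi_k\xi_k$ is divergence-free, and moreover $\div(\phi_k^2\,\xi_k\otimes\xi_k)=\xi_k\cdot\nabla(\phi_k^2)\,\xi_k=0$. Then I define
\begin{equation*}
W(R,\xi):=\sum_{k=1}^N \gamma_k(R)\,\phi_k(\xi)\,\xi_k.
\end{equation*}

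Finally I would verify the four required properties: smoothness in $(R,\xi)$ is clear, $\div_\xi W=0$ follows termwise, and $\fint W\,d\xi=\sum_k \gamma_k(R)\xi_k \fint\phi_k=0$. For the quadratic identities, the disjoint-support property forces $\phi_j\phi_k\equiv 0$ for $j\neq k$, so
\begin{equation*}
W\otimes W=\sum_k \gamma_k(R)^2\phi_k^2\,\xi_k\otimes\xi_k,
\end{equation*}
whose $\xi$-divergence vanishes by the pipe property above, and whose average equals $\sum_k\gamma_k(R)^2\xi_k\otimes\xi_k=R$ by the normalization of $\phi_k^2$ and the decomposition of $R$. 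The step I expect to be most delicate is the simultaneous realization of the disjointness of the tubes and the closure condition (each pipe is genuinely periodic on $\T^3$), which is exactly where the rationality of $\xi_k$ and the compactness of $\mathcal N$ (giving only finitely many directions) enter in an essential way.
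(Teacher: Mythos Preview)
Your proposal is correct and is precisely the standard Mikado construction of Daneri--Sz\'ekelyhidi. The paper does not give its own proof of this lemma: it simply states the result and refers the reader to \cite{DaSz2016}, so there is nothing to compare beyond noting that your outline reproduces that construction faithfully.
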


Using the fact that $W(R,\xi)$ is $\T^3$-periodic and has zero mean in $\xi$, we write
\begin{equation}\label{e:Mikado_Fourier}
W(R,\xi)=\sum_{k \in \Z^3\setminus\{0\}}  a_k(R) e^{ik\cdot \xi}
\end{equation}
for some  smooth functions $R\to a_k(R) \in \C^3$, satisfying $a_k (R) \cdot k=0$. From the smoothness of $W$, we further infer
\begin{equation}\label{e:a_k_est}
\sup_{R\in \mathcal N}\abs{D^N_R a_k(R)}\leq  \frac{C(\mathcal{N},N,m)}{\abs {k}^m}
\end{equation}
for some constant $C$, which depends, as highlighted in the statement, on $\mathcal{N}$, $N$ and $m$.
\begin{remark}\label{r:choice_of_M}
Later in the proof the estimates \eqref{e:a_k_est} will be used with a specific choice of the compact set $\mathcal{N}$ and of the integers $N$ and $m$: this specific choice will then determine the universal constant $M$ appearing in Proposition \ref{p:main}.
\end{remark}

Using the Fourier representation we see that from \eqref{e:MikadoWW}
\begin{equation}\label{e:Mikado_stationarity}
W(R,\xi)\otimes W(R,\xi) = R+\sum_{k\neq 0} C_{k}(R) e^{i k\cdot \xi}
\end{equation}
where
\begin{equation}\label{e:Ck_ind}
C_k  k=0 \quad \mbox{and} \quad
\sup_{R\in \mathcal N}\abs{D^N_R C_k(R)}\leq \frac{C (\mathcal{N}, N, m)}{\abs {k}^m}
\end{equation}
for any $m,N \in \N$. 

It will also be useful to write the Mikado flows in terms of a potential. We note
\begin{align}
\curl_{\xi}\left(\left(\frac{ik\times  a_k}{\abs{k}^2}\right) e^{i k\cdot \xi}\right) &= -i\left(\frac{ik\times  a_k}{\abs{k}^2}\right)\times k  e^{i k\cdot \xi} 
= -\frac{k\times (k\times  a_k)}{\abs{k}^2}  e^{i k\cdot \xi} =  a_k  e^{i k\cdot \xi} \label{e:Mikado_Potential}
\end{align}

Recall that $\mathring{\overline R_q}$ is supported in the set $\T^3\times \bigcup_iI_i$, whereas, from \eqref{e:chi:time:width} it follows that 
$[0,T]\setminus  \bigcup_iI_i=\bigcup_iJ_i$, where the open intervals $J_i$ have length $|J_i|=\tfrac 23\tau_q$ each, except for the first $J_0$ and last one, which might be shortened by the intersection with $[0,T]$, more precisely
$$
J_i=(t_{i+1}- \tfrac 13 \tau_q,t_{i+1}+\tfrac 13\tau_q) \cap [0,T] \, .
$$
We start by defining smooth non-negative cut-off functions $\eta_i=\eta_i(x,t)$ with the following properties
\begin{enumerate}
\item[(i)] $\eta_i\in C^{\infty}(\T^3\times [0,T])$ with $0\leq \eta_i(x,t)\leq 1$ for all $(x,t)$;
\item[(ii)] $\supp \eta_i\cap\supp\eta_j=\emptyset$ for $i\neq j$;
\item[(iii)] $\T^3\times I_i\subset \{(x,t):\eta_i(x,t)=1\}$;
\item[(iv)] $\supp \eta_i\subset \T^3\times I_i\cup J_i\cup J_{i+1}$;
\item[(v)] There exists a positive geometric constant $c_0>0$ such that for any $t\in[0,T]$
$$
\sum_i\int_{\T^3}\eta_i^2(x,t)\,dx\geq c_0.
$$
\end{enumerate}

The next lemma is taken from \cite{BDLSV2017}.
\begin{lemma}\label{l:cutoffs}
There exists cut-off functions $\{\eta_i\}_i$ with the properties (i)-(v) above and such that for any $i$ and $n,m\geq 0$
\begin{align*}
\|\partial_t^n\eta_i\|_{m}\leq C (n,m) \tau_q^{-n}
\end{align*}
where $C(n,m)$ are geometric constants depending only upon $m$ and $n$.
\end{lemma}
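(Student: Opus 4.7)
The plan is a two-part construction: a purely temporal family of cutoffs $\chi_i$ realising the identity on $I_i$, together with an alternating spatial separation that makes supports pairwise disjoint during the only temporal overlap, namely $J_{i+1}$ between $\supp\eta_i$ and $\supp\eta_{i+1}$.

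First I would choose smooth $\chi_i:[0,T]\to[0,1]$ with $\chi_i\equiv 1$ on $I_i$ and $\supp\chi_i\subset J_i\cup I_i\cup J_{i+1}$, obtained by mollifying $\mathbf 1_{I_i}$ at a scale comparable to $\tau_q$; this automatically satisfies $|\partial_t^n\chi_i|\lesssim\tau_q^{-n}$. I then fix, once and for all, two smooth $\phi_0,\phi_1\in C^\infty(\T^3;[0,1])$ with $\supp\phi_0\cap\supp\phi_1=\emptyset$ and $\int_{\T^3}\phi_k^2\,dx\geq c_*$ for $k=0,1$ and some absolute $c_*>0$, and I associate the spatial profile $\phi_{i\bmod 2}$ to the index $i$. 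Inside each overlap interval $J_{i+1}$, the time range is split into three sub-intervals of comparable length: a first buffer on which $\eta_i\equiv 1$ on all of $\T^3$ (so $\eta_{i+1}\equiv 0$ there), a middle sub-interval on which the spatial support of $\eta_i$ is smoothly shrunk to $\supp\phi_{i\bmod 2}$ and, simultaneously, $\eta_{i+1}$ is lifted on the disjoint $\supp\phi_{(i+1)\bmod 2}$ through an amplitude parametrisation of the form $(g,h)=(\cos\tfrac{s\pi}{2},\sin\tfrac{s\pi}{2})$ with $s$ depending linearly on $t$, and a final buffer on which $\eta_i\equiv 0$ and $\eta_{i+1}\equiv 1$ on $\T^3$. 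A symmetric recipe is used on $J_i$.

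Property (iii) is built into the buffer phases, (iv) follows by inspection, and (ii) is then automatic: non-adjacent $\eta_i,\eta_j$ with $|i-j|\geq 2$ have disjoint temporal supports, while the only overlapping pair $\eta_i,\eta_{i+1}$ is separated by the spatial disjointness of $\supp\phi_0$ and $\supp\phi_1$ in the middle phase and by the vanishing of one of the two in either buffer. For (v), inside $I_i$ one has $\int_{\T^3}\eta_i^2\,dx=1$, while inside a middle sub-interval
\[
\int_{\T^3}\eta_i^2\,dx+\int_{\T^3}\eta_{i+1}^2\,dx=g^2\|\phi_{i\bmod 2}\|_{L^2}^2+h^2\|\phi_{(i+1)\bmod 2}\|_{L^2}^2\geq c_*(g^2+h^2)=c_*,
\]
and in the buffer phases one of the $\eta_j$'s is identically $1$ on $\T^3$; taking $c_0=\min(1,c_*)$ yields (v). The derivative bound $\|\partial_t^n\eta_i\|_m\leq C(n,m)\tau_q^{-n}$ then follows from the chain rule, since every $\partial_t$ yields a factor $\tau_q^{-1}$ from differentiating a profile scaled in time by $\tau_q$, whereas every $\partial_x$ hits only the fixed $\phi_0$ or $\phi_1$.

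The main obstacle I expect is precisely the simultaneous reconciliation of the pointwise identity (iii) with the pairwise disjointness (ii) and the non-degeneracy (v): naively, both $\eta_i$ and $\eta_{i+1}$ must be non-zero somewhere on $J_{i+1}$ to keep (v) above a positive threshold, yet the identity on $I_i$ forces $\eta_i\equiv 1$ on $\T^3$ at the left endpoint of $J_{i+1}$. The three-phase splitting inside $J_{i+1}$, with the crucial middle phase in which the two spatial supports have already been shrunk to the disjoint $\supp\phi_0$ and $\supp\phi_1$, is what allows all three properties to coexist. Smoothness across the junctions of the three phases is enforced by using standard bump functions with overlapping supports rather than by truly piecewise formulas.
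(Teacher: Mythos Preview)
Your overall strategy—parity-alternating spatial profiles $\phi_0,\phi_1$ with disjoint supports, combined with temporal cut-offs—is exactly the construction used in \cite{BDLSV2017}, to which the paper defers for this lemma. So the approach is right.

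However, the three-phase description of what happens inside $J_{i+1}$ contains a genuine inconsistency. You write that in the middle sub-interval the spatial support of $\eta_i$ is ``smoothly shrunk to $\supp\phi_{i\bmod 2}$'' while \emph{simultaneously} $\eta_{i+1}$ is lifted on $\supp\phi_{(i+1)\bmod 2}$. But at the left boundary of that middle sub-interval you have $\eta_i\equiv 1$ on all of $\T^3$ (this is forced by your first buffer), so in particular $\eta_i>0$ on $\supp\phi_{(i+1)\bmod 2}$. If $\eta_{i+1}$ is already being lifted there, property (ii) fails. Your own formula $\eta_i=g\,\phi_{i\bmod 2}$, $\eta_{i+1}=h\,\phi_{(i+1)\bmod 2}$ used in the verification of (v) presupposes that $\eta_i$ has \emph{already} been localised to $\supp\phi_{i\bmod 2}$ before $\eta_{i+1}$ appears—which is incompatible with the middle phase starting at $\eta_i\equiv 1$.

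The fix is to insert two additional transition sub-intervals: one in which $\eta_i$ shrinks from $1$ to $\phi_{i\bmod 2}$ while $\eta_{i+1}\equiv 0$, and a symmetric one in which $\eta_{i+1}$ grows from $\phi_{(i+1)\bmod 2}$ to $1$ while $\eta_i\equiv 0$. This gives five sub-intervals in $J_{i+1}$ rather than three; in the central one your $(g,h)=(\cos\tfrac{s\pi}{2},\sin\tfrac{s\pi}{2})$ parametrisation and the computation for (v) work exactly as you wrote, and in the two new sub-intervals one of the pair vanishes identically so (ii) is automatic and (v) holds because $\int\eta_i^2$ interpolates monotonically between $|\T^3|$ and $\|\phi_{i\bmod 2}\|_{L^2}^2\geq c_*$. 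All derivative bounds are unaffected, since each sub-interval still has length $\sim\tau_q$.
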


Define 
\begin{equation*}
\rho_{q}(t):= \frac{1}{3} \left(e(t)-\frac{\delta_{q+2}}{2}-\int_{\T^3}\abs{\overline v_q}^2\,dx\right)
\end{equation*}
and
\begin{equation*}
\rho_{q,i}(x,t):= \frac{\eta_i^2(x,t)}{\sum_j \int_{\T^3} \eta_j^2(y,t)\,dy}\rho_{q}(t)
\end{equation*}

Define the backward flows $\Phi_i$ for the velocity field $\overline v_{q}$ as the solution of the transport equation
\begin{equation*}
\left\{ 
\begin{aligned}
&(\partial_t + \overline v_q  \cdot \nabla) \Phi_i =0 \\ \\
&\Phi_i\left(x,t_i\right) = x.
\end{aligned}
\right.
\end{equation*}
Define
\begin{equation*}
R_{q,i}:=\rho_{q,i} \Id- \eta_i^2\mathring{\overline R_q}
\end{equation*}
and
\begin{equation}\label{e:tildeR_def}
\tilde R_{q,i} =  \frac{\nabla\Phi_iR_{q,i}(\nabla\Phi_i)^T}{ \rho_{q,i}} \,.
\end{equation}
We note that, because of properties (ii)-(iv) of $\eta_i$, 
\begin{itemize}
\item $\supp R_{q,i}\subset \supp\eta_i$;
\item on $\supp\mathring{\bar{R}}_q$ we have $\sum_i\eta_i^2=1$;
\item $\supp \tilde R_{q,i}\subset \T^3\times I_i\cup J_i\cup J_{i+1}$;
\item $\supp \tilde R_{q,i}\cap \supp \tilde R_{q,j}=\emptyset\textrm{ for all }i\neq j$.
\end{itemize}

\begin{lemma}
\label{l:R_in_range}
For $a\gg 1$ sufficiently large we have
\begin{equation}\label{e:Phi-close-to-id}
\|\nabla \Phi_i - \Id\|_0 \leq \frac{1}{2} \qquad \mbox{for $t\in \supp (\eta_i)$.}
\end{equation}
Furthermore, for any $N\geq 0$ 
\begin{align}
\frac{\delta_{q+1}}{8\lambda_q^{\alpha}} \leq |\rho_{q}(t)| &\leq \delta_{q+1}\quad\textrm{ for all $t$}\,,
\label{e:rho_range}\\
\norm{\rho_{q,i}}_0 &\leq \frac{\delta_{q+1}}{c_0}\,,\label{e:rho_i_bnd}\\
 \norm{\rho_{q,i}}_N&\lesssim \delta_{q+1}\,,\label{e:rho_i_bnd_N}\\
 \norm{\partial_t \rho_q}_0 &\lesssim \delta_{q+1} \delta_q^{\sfrac{1}{2}} \lambda_q
 \label{e:rho_t}\,,\\
 \norm{\partial_t \rho_{q,i}}_N &\lesssim \delta_{q+1}\tau_q^{-1}\,.
 \label{e:rho_i_bnd_t}
\end{align}
Moreover, for all $(x,t)$
$$
\tilde R_{q,i}(x,t)\in B_{\sfrac12}(\Id)\subset \mathcal{S}^{3\times 3}_+\,,
$$
where $B_{\sfrac12}(\Id)$ denotes the metric ball of radius $1/2$ around the identity $\Id$ in the space $\mathcal{S}^{3\times 3}$. 
\end{lemma}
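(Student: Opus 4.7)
\textbf{Proof proposal for Lemma \ref{l:R_in_range}.}

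\emph{Flow estimate.} Set $A(x,t) := \nabla\Phi_i(x,t) - \Id$. Differentiating the defining equation of $\Phi_i$ in $x$ shows that $A$ solves the linear inhomogeneous transport equation
\[
(\partial_t + \overline v_q \cdot \nabla) A = -\nabla \overline v_q - (\nabla \overline v_q)\,A, \qquad A(\cdot,t_i) = 0.
\]
Applying Proposition \ref{C0stability} with $\nu=0$ (or equivalently a direct maximum-principle-plus-Gr\"onwall argument), and using \eqref{e:vq:1} to bound $\|\nabla \overline v_q\|_0 \lesssim \delta_q^{\sfrac12}\lambda_q$, we obtain
\[
\|A(\cdot,t)\|_0 \lesssim |t-t_i|\,\delta_q^{\sfrac12}\lambda_q \exp\bigl(|t-t_i|\,\delta_q^{\sfrac12}\lambda_q\bigr).
\]
Since $\supp\eta_i \subset I_{i-1}\cup J_i\cup I_i$ has temporal extent $\lesssim \tau_q$, the CFL-type condition \eqref{e:CFL} yields $\|A\|_0 \lesssim \ell^{2\alpha}$, which is in fact arbitrarily small for $a$ sufficiently large, in particular less than $1/2$, giving \eqref{e:Phi-close-to-id}.

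\emph{Bounds on $\rho_q$ and $\rho_{q,i}$.} The upper bound in \eqref{e:rho_range} is immediate from \eqref{e:voverline_energy_error}: indeed $|\rho_q|\leq \frac{1}{3}(2\delta_{q+1} + \delta_{q+2}/2)\leq\delta_{q+1}$. For the lower bound, $\delta_{q+2}\lambda_q^{\alpha}/\delta_{q+1} \simeq a^{b^q(\alpha - 2\beta b(b-1))}$, so by choosing $\alpha < 2\beta b(b-1)$ (compatible with the hypothesis on $\alpha_0$) and $a$ large, we have $\delta_{q+2} \leq \delta_{q+1}/(4\lambda_q^{\alpha})$, hence $\rho_q \geq \frac{1}{3}\bigl(\frac{\delta_{q+1}}{2\lambda_q^{\alpha}} - \frac{\delta_{q+1}}{8\lambda_q^{\alpha}}\bigr) \geq \frac{\delta_{q+1}}{8\lambda_q^{\alpha}}$. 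The spatial bounds \eqref{e:rho_i_bnd}, \eqref{e:rho_i_bnd_N} then follow from property (v) of the cutoffs (giving $\sum_j\int \eta_j^2\geq c_0$) together with the derivative estimates for $\eta_i$ in Lemma \ref{l:cutoffs}. For \eqref{e:rho_t}, we use the energy identity for the Navier--Stokes--Reynolds system \eqref{NSR}:
\[
\tfrac{d}{dt}\int_{\T^3}|\overline v_q|^2\,dx = -2\nu\int_{\T^3}|(-\Delta)^{\sfrac{\gamma}{2}}\overline v_q|^2\,dx - 2\int_{\T^3}\nabla \overline v_q:\mathring{\overline R}_q\,dx.
\]
The Reynolds term is bounded by $\|\nabla\overline v_q\|_0\|\mathring{\overline R}_q\|_0\lesssim \delta_q^{\sfrac12}\lambda_q\delta_{q+1}$ via \eqref{e:vq:1} and \eqref{e:Rq:1}; the viscous term is controlled by interpolation, $\nu\|\overline v_q\|_\gamma^2 \lesssim \nu\|\overline v_q\|_0^{2(1-\gamma)}\|\overline v_q\|_1^{2\gamma}\lesssim \nu(\delta_q^{\sfrac12}\lambda_q)^{2\gamma}$, which is absorbed by the Reynolds-term bound provided $\nu$ is sufficiently small (depending only on the iteration parameters, as allowed by the hypothesis $\nu<1$). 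Finally $|e'|\leq 1$ is absorbed since $\delta_{q+1}\delta_q^{\sfrac12}\lambda_q\gtrsim 1$ for $a$ large. The estimate \eqref{e:rho_i_bnd_t} then follows by applying the Leibniz rule to $\rho_{q,i}=\eta_i^2\rho_q/\sum_j\int\eta_j^2$, noting that the $\partial_t\eta_i^2$ term dominates because $\tau_q^{-1}\gtrsim \delta_q^{\sfrac12}\lambda_q$.

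\emph{Range of $\tilde R_{q,i}$.} On $\{\eta_i>0\}$ the defining formula \eqref{e:tildeR_def} rewrites, using $R_{q,i}=\rho_{q,i}\Id - \eta_i^2\mathring{\overline R}_q$, as
\[
\tilde R_{q,i} = \nabla\Phi_i\bigl(\Id - B\bigr)(\nabla\Phi_i)^T, \qquad B := \frac{\eta_i^2\mathring{\overline R}_q}{\rho_{q,i}} = \frac{\bigl(\sum_j\int\eta_j^2\,dy\bigr)\mathring{\overline R}_q}{\rho_q}.
\]
The expression on the right gives a smooth extension to all $(x,t)$. Using \eqref{e:Rq:1} and \eqref{e:rho_range}, $\|B\|_0 \lesssim \delta_{q+1}\ell^{\alpha}\cdot\lambda_q^{\alpha}/\delta_{q+1} \lesssim (\lambda_q\ell)^{\alpha}$, which by \eqref{e:compare-lambda-ell} (and indeed by the defining formula \eqref{e:ell_def}) can be made arbitrarily small for $a$ large. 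Combined with the refined flow bound $\|\nabla\Phi_i - \Id\|_0\lesssim \ell^{2\alpha}$ from the first step, a direct expansion of $(\Id+A)(\Id - B)(\Id + A)^T - \Id$ shows $\|\tilde R_{q,i}-\Id\|_0 \ll 1/2$, so in particular $\tilde R_{q,i}\in B_{\sfrac12}(\Id)\subset \S^{3\times 3}_+$.

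\emph{Main obstacle.} The only genuinely new difficulty compared with the Euler case treated in \cite{BDLSV2017} lies in the estimate \eqref{e:rho_t} for $\partial_t\rho_q$: the energy identity now contains the extra dissipative term $2\nu\|(-\Delta)^{\sfrac{\gamma}{2}}\overline v_q\|_{L^2}^2$, and absorbing it into $\delta_{q+1}\delta_q^{\sfrac12}\lambda_q$ requires the interpolation argument above together with the smallness of $\nu$ that is permitted by the inductive scheme; all other estimates are either formal consequences of the cutoff construction or follow the pattern of \cite{BDLSV2017}.
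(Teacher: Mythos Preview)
Your argument is largely sound and follows the paper's approach, but there is a genuine gap in the treatment of the dissipative term in \eqref{e:rho_t}. Your interpolation
\[
\nu\int_{\T^3}|(-\Delta)^{\sfrac{\gamma}{2}}\overline v_q|^2\,dx \lesssim \nu\,[\overline v_q]_{\gamma+\varepsilon}^2 \lesssim \nu\,\|\overline v_q\|_0^{2(1-\gamma-\varepsilon)}\|\overline v_q\|_1^{2(\gamma+\varepsilon)} \lesssim \nu\,(\delta_q^{\sfrac12}\lambda_q)^{2(\gamma+\varepsilon)}
\]
cannot be absorbed into $\delta_{q+1}\delta_q^{\sfrac12}\lambda_q$ by a $q$-independent choice of $\nu$. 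Indeed, the required inequality reads $\nu\lesssim \lambda_q^{(1-\beta)(1-2\gamma-2\varepsilon)-2\beta b}$, and for $\beta,\gamma$ both close to $\sfrac13$ and $b>1$ the exponent is \emph{negative} (for instance $\beta=\gamma=0.3$, $b=1.1$ gives $(1-\beta)(1-2\gamma)=0.28<0.66=2\beta b$), so the right-hand side tends to $0$ as $q\to\infty$. Since $\nu$ is fixed once and for all in the Navier--Stokes--Reynolds system \eqref{NSR}, the ``smallness of $\nu$'' you invoke is illusory in this regime.

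The paper resolves this differently: it uses the uniform H\"older bound $\|v_q\|_{\gamma+\varepsilon}\le 1$ valid for every $\gamma+\varepsilon<\beta$ (this is the bound obtained by telescoping \eqref{e:v_diff_prop_est} over all previous steps, cf.\ the proof of Theorem~\ref{assegnocinetica} and the remark in the proof of Proposition~\ref{p:vq:vell}). Since $\overline v_q$ differs from $v_q$ only by terms controlled by \eqref{e:v:ell:0} and \eqref{e:vq:vell}, one obtains $[\overline v_q]_{\gamma+\varepsilon}\lesssim 1$ as well. Hence the dissipative term is simply $\lesssim \nu<1$, which is absorbed because $\delta_{q+1}\delta_q^{\sfrac12}\lambda_q=\lambda_q^{1-\beta-2\beta b}\cdot(\textrm{bounded})\ge 1$ by the parameter constraint $b<(1-\beta)/(2\beta)$ in \eqref{e:b_beta_rel}. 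Note that this argument genuinely needs the ordering $\gamma<\beta$, which is implicit throughout the paper. Your identification of the dissipative term as the ``main obstacle'' is correct; what is missing is that the cure is a sharper \emph{a priori} H\"older bound on $\overline v_q$ rather than smallness of $\nu$.
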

\begin{proof}[Proof of Lemma~\ref{l:R_in_range}]
For the estimates \eqref{e:rho_range}-\eqref{e:rho_i_bnd_N} we refer to \cite{BDLSV2017}.
 Note that by the definition of the cut-off functions $\eta_i$ 
\begin{equation}\label{e:sum_eta_range}
c_0 \leq \sum_i \int_{\T^3} \eta_i^2(y,t)\,dy \leq 2\,.
\end{equation}
To prove \eqref{e:rho_t} and \eqref{e:rho_i_bnd_t} we first note that
\begin{equation*}
\abs{\frac{d}{dt} \int \abs{\overline v_{q}(x,t)}^2\,dx}\leq  2\abs{\int \nabla \overline v_q\cdot \mathring{\overline R_q}\,dx }+2\nu \int |(-\Delta)^{\sfrac{\gamma}{2}}\overline{v}_q|^2 \,dx \lesssim \delta_{q+1}\delta_q^{\sfrac 12}\lambda_q \ell^\alpha
\end{equation*}
Thus\footnote{Note that $\|\partial_t e\|_0 \leq 1 \leq \delta_{q+1} \delta_q^{\frac{1}{2}} \lambda_q$ since $\delta_{q+1} \delta_q^{\sfrac{1}{2}} \lambda_q = \lambda_{q+1}^{-2\beta} \lambda_q^{1-\beta} 
\geq  a^{b^q(1-\beta - 2\beta b)} \geq 1$ (recall that $b < \frac{1-\beta}{2\beta}$).} 
\[
 \norm{\partial_t \rho_{q}}_0\lesssim \delta_{q+1}\delta_q^{\sfrac 12}\lambda_q 
\]
Then, since $\|\partial_t \eta_j\|_N \lesssim \tau_q^{-1}$ and $\delta_q^{\sfrac 12}\lambda_q\leq \tau_q^{-1}$, 
using \eqref{e:sum_eta_range}, the estimate \eqref{e:rho_i_bnd_t} follows.
\end{proof}

\subsection{The constant \texorpdfstring{$M$}{M}}
The principal term of the perturbation can be written as
\begin{equation}
w_{o}:=\sum_i  \left(\rho_{q,i}(x,t)\right)^{\sfrac12} (\nabla\Phi_i)^{-1} W(\tilde R_{q,i}, \lambda_{q+1}\Phi_i) = \sum_i w_{o,i}\, ,
\label{e:w0_decomp}
\end{equation}
where Lemma \ref{l:Mikado} is applied with $\mathcal{N} = \overline{B}_{\sfrac12} (\Id)$, namely the closed ball (in the space of symmetric $3\times 3$ matrices) of radius $\sfrac{1}{2}$ centered at the identity matrix.

From Lemma \ref{l:R_in_range} it follows that $W(\tilde R_{q,i}, \lambda_{q+1}\Phi_i)$ is well defined. Using the Fourier series representation of the Mikado flows \eqref{e:Mikado_Fourier} we can write 
\begin{equation*}
w_{o,i}=\sum_{k \neq 0} (\nabla\Phi_i)^{-1} b_{i,k} e^{i\lambda_{q+1}k\cdot \Phi_i}\, ,
\end{equation*}
where 
\begin{equation*}
b_{i,k}(x,t):= \left(\rho_{q,i}(x,t)\right)^{\sfrac12} a_k(\tilde R_{q,i}(x,t)).
\end{equation*}
The following is a crucial point of our construction, which ensures that the constant $M$ of Proposition \ref{p:main}
is geometric and in particular independent of all the parameters of the construction.

\begin{lemma}\label{l:choice_of_M}
There is a geometric constant $\bar M$ such that
\begin{equation}\label{e:barM}
\|b_{i,k}\|_0 \leq \frac{\bar M}{|k|^{ 4}} \delta_{q+1}^{\sfrac{1}{2}}\, .
\end{equation}
\end{lemma}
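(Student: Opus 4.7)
The plan is to simply combine the two available pointwise bounds: one for $\rho_{q,i}^{\sfrac12}$ coming from Lemma \ref{l:R_in_range}, and one for the Fourier coefficients $a_k$ coming from \eqref{e:a_k_est}, applied with the specific compact set $\mathcal N$ fixed in the definition \eqref{e:w0_decomp}. The only thing to verify is that the resulting constant depends \emph{only} on geometric data (the kernel $W$, the ball $\overline{B}_{\sfrac12}(\Id)$, and the partition--of--unity constant $c_0$), and in particular is independent of $a,b,\beta,\alpha,q,\nu$.

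First I would recall from Lemma \ref{l:R_in_range} that $\tilde R_{q,i}(x,t)\in \overline{B}_{\sfrac12}(\Id)=\mathcal N$ pointwise, so that $a_k(\tilde R_{q,i}(x,t))$ is well-defined and we may apply \eqref{e:a_k_est} with $N=0$ and the choice $m=4$. This gives a constant $C_0:=C(\mathcal N,0,4)$ (determined purely by the Mikado construction on the fixed compact set $\mathcal N$) such that
\begin{equation*}
\sup_{R\in\mathcal N}|a_k(R)|\leq \frac{C_0}{|k|^{4}}\, ,\qquad \forall\,k\in\Z^3\setminus\{0\}\, .
\end{equation*}
In particular, uniformly in $(x,t,i)$,
\begin{equation*}
\bigl|a_k(\tilde R_{q,i}(x,t))\bigr|\leq \frac{C_0}{|k|^{4}}\, .
\end{equation*}

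Next I would use the $C^0$ bound \eqref{e:rho_i_bnd} from Lemma \ref{l:R_in_range}, namely $\|\rho_{q,i}\|_0\leq \delta_{q+1}/c_0$, to control the scalar prefactor by
\begin{equation*}
\bigl(\rho_{q,i}(x,t)\bigr)^{\sfrac12}\leq \frac{\delta_{q+1}^{\sfrac12}}{c_0^{\sfrac12}}\, .
\end{equation*}
(Here one uses that $\rho_{q,i}\geq 0$, which is immediate from its definition as $\eta_i^2$ times a positive combination coming from the lower bound in \eqref{e:rho_range}; this is exactly the reason $\rho_q$ was introduced as a third of the energy gap.) Multiplying the two estimates yields
\begin{equation*}
\|b_{i,k}\|_0\leq \frac{C_0}{c_0^{\sfrac12}}\cdot\frac{\delta_{q+1}^{\sfrac12}}{|k|^{4}}\, ,
\end{equation*}
and setting $\bar M:=C_0/\sqrt{c_0}$ gives \eqref{e:barM}.

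The only conceptual point to emphasize is universality of $\bar M$: $C_0$ depends only on the Mikado vector field $W$ and on the fixed compact set $\mathcal N=\overline B_{\sfrac12}(\Id)$ (none of which involve $q$ or the parameters $a,b,\alpha,\beta,\nu$), while $c_0$ is the geometric constant supplied by Lemma \ref{l:cutoffs}. I do not foresee any real obstacle; the main subtle point is the choice $m=4$, which must be at least $4$ so that this lemma propagates, later in the paper, into absolutely convergent Fourier sums for $w_o$ and its first derivative with $|k|$-weights up to order $3$. This is what freezes the value of $M$ in Proposition \ref{p:main}, in the sense anticipated by Remark \ref{r:choice_of_M}.
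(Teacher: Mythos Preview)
Your proof is correct and is exactly the argument the paper has in mind: combine the pointwise bound $\|\rho_{q,i}\|_0\leq \delta_{q+1}/c_0$ from \eqref{e:rho_i_bnd} with the decay estimate \eqref{e:a_k_est} for $a_k$ on the fixed compact set $\mathcal N=\overline B_{\sfrac12}(\Id)$ (using Lemma \ref{l:R_in_range} to ensure $\tilde R_{q,i}\in\mathcal N$), choosing $N=0$ and $m=4$. The paper does not spell out this proof, treating the lemma as an immediate consequence of the preceding estimates (cf.\ Remark \ref{r:choice_of_M}), so your write-up is if anything more explicit than the original.
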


We are finally ready to define the constant $M$ of Proposition \ref{p:main}: from Lemma \ref{l:choice_of_M} it follows trivially that the constant is indeed geometric and hence independent of all the parameters entering in the statement of Proposition \ref{p:main}.

We can now define the geometric constant $M$ as 
\begin{equation}\label{d:choice_of_M}
M = 64 \bar M \sum_{k\in \Z^3\setminus \{0\}} \frac{1}{|k|^4}\, ,
\end{equation}
where $\bar M$ is the constant of Lemma \ref{l:choice_of_M}.

We also define
\begin{align*}
w_{c}&:=\frac{-i}{\lambda_{q+1}}\sum_{i,k\neq 0} \left[ \curl \left(\left(\rho_{q,i}\right)^{\sfrac12} \frac{\nabla\Phi_i^T(k\times a_{k}(\tilde R_{q,i}))}{\abs{k}^2}\right)\right] e^{i\lambda_{q+1}k\cdot \Phi_i}
=: \sum_{i,k\neq 0} c_{i,k} e^{i\lambda_{q+1}k\cdot \Phi_i}\, .
\end{align*}
Then by direct computations one can check that
\begin{align}\label{e:w_curl}
w_{q+1} = w_o+w_c=\frac{-1}{\lambda_{q+1}}\curl\left(\sum_{i,k\neq 0} (\nabla\Phi_i)^T\left(\frac{ik\times b_{k,i}}{\abs{k}^2}\right)e^{i\lambda_{q+1}k\cdot \Phi_i}\right)\,,
\end{align}
thus the perturbation $w_{q+1}$ is divergence free. Note that the dependence of $w_{q+1}(\cdot,0)$ on the function $e(t)$ is only trough the value $e(0)$.

\subsection{The final Reynolds stress and conclusions}  Upon letting
\begin{align*}
\overline R_q = \sum_{i} R_{q,i}\, ,
\end{align*}
the new Reynolds stress will be split in two main component: the Euler error $\mathring{R}_{q+1}^E$ and the dissipative error $\mathring{R}_{q+1}^D$, i.e. 
\begin{align}
\mathring{R}_{q+1} = \mathring{R}_{q+1}^E+\mathring{R}_{q+1}^D \, ,
 \label{e:R:q+1} 
\end{align}
where 
\begin{align}
& \mathring{R}_{q+1}^E :=  \RR \left( w_{q+1} \cdot \nabla \overline v_q+\partial_t  w_{q+1} + \overline v_q \cdot \nabla w_{q+1}+\div \left(- {\overline R}_{q} + (w_{q+1} \otimes w_{q+1}) \right)\right) \\
& \mathring{R}_{q+1}^D:=\nu \RR \left( \gammalaplace w_{q+1} \right).
\end{align}

Notice that all three terms in \eqref{e:R:q+1} are of the form $\RR f$, where $f$ has always zero mean. Notice also that the definition of $ \mathring{R}_{q+1}^E$ is the same as in \cite{BDLSV2017} and that due to the dissipative term $\gammalaplace$ we have to put also $ \mathring{R}_{q+1}^D$ in the definition of the new Reynolds stress in order to ensure that the system \eqref{NSR} is satisfied at the step $q+1$.
Indeed, with this definition one may verify that 
\begin{equation*}
\left\{
\begin{array}{l}
 \partial_t v_{q+1} + \div (v_{q+1} \otimes v_{q+1}) + \nabla p_{q+1} +\nu \gammalaplace v_{q+1}= \div(\mathring{R}_{q+1}) \, ,
\\ \\
 \div v_{q+1} = 0 \, ,
\end{array}\right.
\end{equation*}
where the new pressure is defined by
\begin{equation}\label{e:new_pressure}
p_{q+1}(x,t) = \bar p_q(x,t)  - \sum_{i} \rho_{q,i}(x,t)  + \rho_{q}(t).
\end{equation}

We now state a proposition taken from \cite{BDLSV2017}.
\begin{proposition}
\label{p:perturbation}
For $t\in \tilde I_i$ and any $N\geq 0$
\begin{align}
\norm{ (\nabla\Phi_i)^{-1}}_N + \norm{\nabla\Phi_i}_N &\lesssim \ell^{-N} \,,\label{e:phi_N}\\
\norm{\tilde R_{q,i}}_N &\lesssim  \ell^{-N}\,,\label{e:tR_est}\\
\norm{b_{i,k}}_N &\lesssim \delta_{q+1}^{\sfrac12}|k|^{-6}\ell^{-N}\,, \label{e:b_k_est_N}\\
\norm{c_{i,k}}_N &\lesssim  \delta_{q+1}^{\sfrac12}\lambda_{q+1}^{-1}|k|^{-6}\ell^{-N-1}\,.\label{e:c_k_est}
\end{align}
Moreover assuming $a$ is sufficiently large, the perturbations $w_o$, $w_c$ and $w_q$ satisfy the following estimates
\begin{align}
\norm{w_o}_0 +\frac{1}{\lambda_{q+1}}\norm{w_o}_1 &\leq \frac{M}{4}\delta_{q+1}^{\sfrac 12}\label{e:w_o_est}\\
\norm{w_c}_0+\frac{1}{\lambda_{q+1}} \norm{w_c}_1 &\lesssim \delta_{q+1}^{\sfrac 12}\ell^{-1}\lambda_{q+1}^{-1}\label{e:w_c_est}\\
\norm{w_{q+1}}_0 +\frac{1}{\lambda_{q+1}}\norm{w_{q+1}}_1 &\leq  \frac{M}{2} \delta_{q+1}^{\sfrac 12}\label{e:w_est}
\end{align}
where the constant $M$ depends solely on the constant $c_0$ in \eqref{e:sum_eta_range}.
In particular, we obtain \eqref{e:outline_v_diff}.
\end{proposition}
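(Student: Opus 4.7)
The plan is to establish the six batches of estimates \eqref{e:phi_N}--\eqref{e:w_est} in cascade, each feeding the next, and then to read off \eqref{e:outline_v_diff} from $v_{q+1}=\overline v_q+w_{q+1}$.

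First I would tackle the flow $\Phi_i$. Differentiating the transport equation $(\partial_t+\overline v_q\cdot\nabla)\Phi_i=0$ in space yields a transport-type equation for $\nabla\Phi_i$ with forcing $\nabla\overline v_q\cdot\nabla\Phi_i$, to which Proposition \ref{p:transport_derivatives1} applies. The CFL-type inequality \eqref{e:CFL} keeps $(t-t_i)\|\overline v_q\|_1$ small on $\tilde I_i$, so the Gronwall factor is $O(1)$; iterating the higher-order version of \eqref{e:trans_est_1} and plugging in the velocity bound \eqref{e:vq:1} from Proposition \ref{p:vq:vell} produces \eqref{e:phi_N}. For $(\nabla\Phi_i)^{-1}$, the $C^0$ closeness to identity \eqref{e:Phi-close-to-id} gives invertibility, and a Neumann series together with Leibniz promotes the estimates of $\nabla\Phi_i$ to the same estimates for its inverse.

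Next I would handle $\tilde R_{q,i}$. Writing
\[
\tilde R_{q,i} = \nabla\Phi_i\bigl(\Id - \eta_i^2 \rho_{q,i}^{-1}\mathring{\overline R}_q\bigr)(\nabla\Phi_i)^T,
\]
the key observation is that $\rho_{q,i}^{-1}\|\mathring{\overline R}_q\|_0\lesssim \lambda_q^\alpha\ell^\alpha\leq 1$ by \eqref{e:rho_range} and \eqref{e:Rq:1}. This both gives the $N=0$ case of \eqref{e:tR_est} and places $\tilde R_{q,i}$ in the compact set $\mathcal N=\overline B_{1/2}(\Id)$ where the Mikado construction and \eqref{e:a_k_est} are valid. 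The general $N$ case follows by Leibniz using Step 1, \eqref{e:Rq:1} and \eqref{e:rho_i_bnd_N}. The amplitude estimates \eqref{e:b_k_est_N}--\eqref{e:c_k_est} are then immediate: the $N=0$ case of \eqref{e:b_k_est_N} is Lemma \ref{l:choice_of_M} with $m=6$ in place of $m=4$ in \eqref{e:a_k_est}; higher $N$ follows from Fa\`a di Bruno applied to $a_k\circ\tilde R_{q,i}$ combined with the lower bound on $\rho_{q,i}$, which makes the half-power $\rho_{q,i}^{\sfrac12}$ smooth on its support. The estimate on $c_{i,k}$ costs one extra spatial derivative (hence the factor $\ell^{-1}$) while recovering the explicit prefactor $\lambda_{q+1}^{-1}$.

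Finally I would assemble the perturbation estimates. For $\|w_o\|_0$, the disjointness of the supports of the $\eta_i$ means at most two indices contribute at each point, so summing Lemma \ref{l:choice_of_M} against $\sum_{k\neq 0}|k|^{-4}$ matches exactly the definition \eqref{d:choice_of_M} of $M$ and gives $M/4$. For $\|w_o\|_1$, differentiating the oscillatory factor $e^{i\lambda_{q+1}k\cdot\Phi_i}$ produces the leading term of order $\lambda_{q+1}\delta_{q+1}^{\sfrac12}$, while every other contribution (from differentiating amplitudes or $(\nabla\Phi_i)^{-1}$) is lower by a factor $(\ell\lambda_{q+1})^{-1}\ll 1$, controlled via \eqref{e:some_param_ineq} for $a$ large. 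Estimate \eqref{e:w_c_est} follows at once from \eqref{e:c_k_est}, and \eqref{e:w_est} is a triangle inequality. The main obstacle is Step 2: the quotient $\mathring{\overline R}_q/\rho_{q,i}$ is dangerous since $\rho_{q,i}$ can degenerate, and the entire argument rests on the precise balance $\|\mathring{\overline R}_q\|_0\lesssim \delta_{q+1}\ell^\alpha$ versus $|\rho_q|\gtrsim \delta_{q+1}\lambda_q^{-\alpha}$; propagating this balance through all H\"older orders so that $\tilde R_{q,i}$ remains in the fixed compact set $\mathcal N$, and thus the constant $\bar M$ in Lemma \ref{l:choice_of_M} (and hence $M$) is genuinely geometric, is the only delicate point. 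Everything else is a bookkeeping exercise combining Schauder estimates for the Biot-Savart type operators with the transport estimates of Proposition \ref{p:transport_derivatives1}.
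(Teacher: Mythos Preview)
The paper does not give its own proof of this proposition: it is stated as ``taken from \cite{BDLSV2017}'' and no argument is supplied. Your outline is exactly the standard proof from that reference and is correct in substance.

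Two small remarks. First, the flow $\Phi_i$ solves the \emph{pure} transport equation $(\partial_t+\overline v_q\cdot\nabla)\Phi_i=0$, with no fractional Laplacian, so strictly speaking Proposition~\ref{p:transport_derivatives1} is not the right reference; you want the classical transport estimates (equivalently, Proposition~\ref{p:transport_derivatives1} with $\nu=0$, whose proof goes through verbatim). Second, your concern about the degeneracy of $\rho_{q,i}$ is slightly misplaced: since
\[
\rho_{q,i}(x,t)=\eta_i^2(x,t)\cdot\frac{\rho_q(t)}{\sum_j\int\eta_j^2(y,t)\,dy},
\]
the quotient $\eta_i^2/\rho_{q,i}$ is a function of $t$ alone, bounded by $16\lambda_q^{\alpha}\delta_{q+1}^{-1}$ via \eqref{e:rho_range} and \eqref{e:sum_eta_range}, so no degeneracy occurs in $\tilde R_{q,i}$. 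Likewise $\rho_{q,i}^{1/2}=\eta_i\cdot(\text{positive function of }t)^{1/2}$, so spatial derivatives never hit a genuine square root and no lower bound on $\rho_{q,i}$ is needed for \eqref{e:b_k_est_N}. With these cosmetic fixes your cascade (flow $\to$ $\tilde R_{q,i}$ $\to$ amplitudes via Fa\`a di Bruno $\to$ sum over $k$ using disjoint supports) is precisely the argument of \cite{BDLSV2017}.
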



We are now ready to complete the proof of Proposition \ref{p:main} by proving the remaining estimates \eqref{e:outline_R_est} and \eqref{e:outline_energy_diff}. The estimate \eqref{e:outline_energy_diff} is a consequence of Proposition \ref{p:perturbation} and Lemma \ref{l:R_in_range} and does not involve the different structure of the Navier-Stokes equations with respect to the Euler ones, thus for the proof of the next proposition we refer to \cite{BDLSV2017}.
\begin{proposition}\label{p:energy}
 The energy of $v_{q+1}$ satisfies the following estimate:
\begin{equation*}
    \abs{e(t)-\int_{\T^3}\abs{v_{q+1}}^2\,dx-\frac{\delta_{q+2}}2 }\lesssim \frac{\delta_q^{\sfrac12}\delta_{q+1}^{\sfrac12}\lambda_q^{1+2\alpha}}{\lambda_{q+1}}\,.
\end{equation*}
\end{proposition}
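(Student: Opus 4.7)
The plan is to expand
\[
\int_{\T^3} |v_{q+1}|^2\, dx = \int_{\T^3} |\overline{v}_q|^2\, dx + 2\int_{\T^3} \overline{v}_q \cdot w_{q+1}\, dx + \int_{\T^3} |w_{q+1}|^2\, dx
\]
and use the key identity
\[
e(t) - \int_{\T^3}|\overline v_q|^2\,dx - \frac{\delta_{q+2}}{2} = 3\rho_q(t)
\]
to reduce the statement to showing that $\int|w_{q+1}|^2\,dx$ is close to $3\rho_q(t)$ while the cross term $2\int \overline v_q\cdot w_{q+1}\,dx$ is negligibly small at scale $\lambda_{q+1}^{-1}$. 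Note that the fractional dissipation $\gammalaplace$ does not enter into this purely algebraic/pointwise identity, so the reasoning mirrors that of the Euler case in \cite{BDLSV2017}.

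For the main term, I would use that the $\eta_i$'s (and hence the $w_{o,i}$'s) have pairwise disjoint supports, so $|w_o|^2 = \sum_i |w_{o,i}|^2$. Invoking the stationarity property \eqref{e:Mikado_stationarity} yields
\[
w_{o,i}\otimes w_{o,i} = \rho_{q,i}(\nabla\Phi_i)^{-1}\tilde R_{q,i}(\nabla\Phi_i)^{-T} + \rho_{q,i}(\nabla\Phi_i)^{-1}\!\sum_{k\neq 0}\! C_k(\tilde R_{q,i})\,e^{i\lambda_{q+1}k\cdot\Phi_i}(\nabla\Phi_i)^{-T}.
\]
By the definition \eqref{e:tildeR_def} of $\tilde R_{q,i}$, the non-oscillatory part collapses to $R_{q,i}$, whose trace equals $3\rho_{q,i}$ since $\tr\mathring{\overline R}_q=0$. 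Summing over $i$ and integrating produces exactly $3\rho_q(t)$, which combined with the key identity matches the desired $e(t)-\delta_{q+2}/2$ modulo the oscillatory remainders.

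The remaining contributions -- the oscillatory part of $|w_o|^2$, the cross term $\int\overline v_q\cdot w_o\,dx$, and everything involving $w_c$ -- I would estimate by stationary phase. Since $\|\nabla\Phi_i-\Id\|_0\leq\sfrac12$ by Lemma~\ref{l:R_in_range}, integration by parts along the phase $\lambda_{q+1}k\cdot\Phi_i$ gives, for any smooth amplitude $g$,
\[
\Bigl|\int_{\T^3} g\,e^{i\lambda_{q+1}k\cdot\Phi_i}\,dx\Bigr|\lesssim \frac{1}{|k|\lambda_{q+1}}\bigl(\|g\|_1+\|g\|_0\|\nabla\Phi_i\|_1\bigr).
\]
Inserting the amplitude bounds from Proposition~\ref{p:perturbation}, namely $\|b_{i,k}\|_N\lesssim \delta_{q+1}^{\sfrac12}|k|^{-6}\ell^{-N}$ and $\|\nabla\Phi_i\|_N\lesssim\ell^{-N}$, yields estimates of order $\delta_{q+1}\ell^{-1}\lambda_{q+1}^{-1}$; using the definition \eqref{e:ell_def} of $\ell$, this becomes exactly $\delta_q^{\sfrac12}\delta_{q+1}^{\sfrac12}\lambda_q^{1+2\alpha}/\lambda_{q+1}$ (for $\alpha$ sufficiently small, absorbing the $3\alpha/2$ into $2\alpha$). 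The cross term $\int\overline v_q\cdot w_o\,dx$ is treated identically, using \eqref{e:vq:1} to control $\overline v_q$ as a slowly varying amplitude and summing over $k$ via the rapid decay from \eqref{e:a_k_est}. All $w_c$-contributions are absorbed via the trivial bound $\|w_c\|_0\lesssim\delta_{q+1}^{\sfrac12}\ell^{-1}\lambda_{q+1}^{-1}$ together with Cauchy--Schwarz.

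The real difficulty, as I see it, is not in any single estimate but in the bookkeeping: one must decompose $\int|v_{q+1}|^2\,dx - \int|\overline v_q|^2\,dx - 3\rho_q(t)$ as a finite sum of oscillatory integrals with explicit amplitudes, verify that for each one a single integration by parts suffices, and check that the summation over $k\in\Z^3\setminus\{0\}$ converges thanks to \eqref{e:a_k_est}. Because the dissipative operator $\nu\gammalaplace$ plays no role in the energy balance at the level of pointwise squares, the calculation is formally identical to the one performed in \cite{BDLSV2017} for the Euler equations, and no additional input from the fractional-parabolic part of the construction is needed.
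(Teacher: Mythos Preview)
Your overall strategy is correct and matches the argument in \cite{BDLSV2017} to which the paper defers: expand $\int|v_{q+1}|^2$, identify the non-oscillatory part of $\int|w_o|^2$ with $3\rho_q(t)$ via \eqref{e:Mikado_stationarity} and \eqref{e:tildeR_def}, and kill the oscillatory remainders by stationary phase. The observation that the fractional Laplacian plays no role here is also right.

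There is, however, a real gap in your treatment of the cross term $2\int_{\T^3}\overline v_q\cdot w_{q+1}\,dx$. If you treat $\overline v_q$ merely as a slowly varying amplitude and apply one integration by parts in the phase, the controlling quantity is $\|\overline v_q\cdot L_{i,k}\|_1$; since $\|L_{i,k}\|_1\lesssim \delta_{q+1}^{\sfrac12}|k|^{-6}\ell^{-1}$ dominates $\|\overline v_q\|_1\|L_{i,k}\|_0$, you obtain only
\[
\Bigl|\int_{\T^3}\overline v_q\cdot w_o\,dx\Bigr|\lesssim \frac{\delta_{q+1}^{\sfrac12}\ell^{-1}}{\lambda_{q+1}}=\frac{\delta_q^{\sfrac12}\lambda_q^{1+\sfrac{3\alpha}{2}}}{\lambda_{q+1}}\, ,
\]
which is larger than the claimed bound by a factor $\delta_{q+1}^{-\sfrac12}$. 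The same defect appears in your ``trivial'' treatment of $\int\overline v_q\cdot w_c$, since $\|\overline v_q\|_0\|w_c\|_0\lesssim \delta_{q+1}^{\sfrac12}\ell^{-1}\lambda_{q+1}^{-1}$. This weaker bound does \emph{not} close the iteration: one needs the error to be $\ll\delta_{q+2}$, and for $\beta$ close to $\sfrac13$ the inequality $\delta_q^{\sfrac12}\lambda_q/\lambda_{q+1}\ll\delta_{q+2}$ fails for every admissible $b$ in \eqref{e:b_beta_rel}.

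The fix is to exploit the potential form \eqref{e:w_curl}: writing $w_{q+1}=-\lambda_{q+1}^{-1}\curl U$ with $\|U\|_0\lesssim\delta_{q+1}^{\sfrac12}$ and integrating the curl onto $\overline v_q$ gives
\[
\Bigl|\int_{\T^3}\overline v_q\cdot w_{q+1}\,dx\Bigr|=\frac{1}{\lambda_{q+1}}\Bigl|\int_{\T^3}\curl\overline v_q\cdot U\,dx\Bigr|\lesssim\frac{\|\overline v_q\|_1\,\delta_{q+1}^{\sfrac12}}{\lambda_{q+1}}\lesssim\frac{\delta_q^{\sfrac12}\delta_{q+1}^{\sfrac12}\lambda_q}{\lambda_{q+1}}\, ,
\]
which is precisely the required estimate. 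This is how the cross term is handled in \cite{BDLSV2017}; the rest of your outline is fine.
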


For the inductive estimate on $\mathring R_{q+1}$ we have the following
\begin{proposition}
\label{p:R_q+1}
The Reynolds stress error $\mathring R_{q+1}$ defined in \eqref{e:R:q+1} satisfies the estimate
\begin{equation}\label{e:final_R_est}
\norm{\mathring R_{q+1}}_{0}\lesssim \frac{\delta_{q+1}^{\sfrac12}\delta_q^{\sfrac{1}{2}} \lambda_q}{\lambda_{q+1}^{1-4\alpha}} \,.
\end{equation}
In particular, \eqref{e:outline_R_est} holds.
\end{proposition}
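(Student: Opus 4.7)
The plan is to split the new Reynolds stress as $\mathring R_{q+1} = \mathring R_{q+1}^E + \mathring R_{q+1}^D$ and estimate each piece separately. The Euler contribution $\mathring R_{q+1}^E$ is exactly the one constructed in \cite{BDLSV2017}, so the bound $\|\mathring R_{q+1}^E\|_0 \lesssim \delta_{q+1}^{\sfrac12} \delta_q^{\sfrac12} \lambda_q \lambda_{q+1}^{4\alpha - 1}$ can be imported verbatim from that reference. All of the novelty is therefore concentrated in the dissipative remainder $\mathring R_{q+1}^D = \nu \mathcal R \gammalaplace w_{q+1}$.

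For $\mathring R_{q+1}^D$ the key idea is to exploit the curl representation \eqref{e:w_curl}, namely $w_{q+1} = -\lambda_{q+1}^{-1} \curl A$ with
\[
A = \sum_{i,k \neq 0}(\nabla \Phi_i)^T \frac{ik \times b_{i,k}}{|k|^2} e^{i\lambda_{q+1} k \cdot \Phi_i}.
\]
Since $\curl$ and $\gammalaplace$ are both Fourier multipliers and hence commute, one has $\mathring R_{q+1}^D = -\nu\lambda_{q+1}^{-1}\, \mathcal R \curl (\gammalaplace A)$. The composition $\mathcal R\circ \curl$ is a bounded Fourier multiplier of order zero, and so by Proposition \ref{p:CZO_C_alpha} it is continuous on $C^\alpha$; combined with Theorem \ref{lapla.holder} this yields
\[
\|\mathring R_{q+1}^D\|_\alpha \lesssim \frac{\nu}{\lambda_{q+1}}\, \|A\|_{2\gamma + 2\alpha}.
\]

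Next, the bounds \eqref{e:phi_N} and \eqref{e:b_k_est_N}, together with the fact that differentiating each exponential costs a factor $\lambda_{q+1}$ which dominates the cost $\ell^{-1} \leq \lambda_{q+1}$ paid by the amplitudes, furnish
\[
\|A\|_0 \lesssim \delta_{q+1}^{\sfrac12}\sum_{k\ne 0}|k|^{-7}\lesssim \delta_{q+1}^{\sfrac12},\qquad \|A\|_N \lesssim \delta_{q+1}^{\sfrac12}\lambda_{q+1}^N \quad(N\geq 1).
\]
Choosing $\alpha$ so small that $2\gamma + 2\alpha < 1$ (possible since $\gamma<\sfrac13$), a standard H\"older interpolation yields $\|A\|_{2\gamma+2\alpha}\lesssim \delta_{q+1}^{\sfrac12}\lambda_{q+1}^{2\gamma+2\alpha}$ and hence
\[
\|\mathring R_{q+1}^D\|_0 \leq \|\mathring R_{q+1}^D\|_\alpha \lesssim \nu\, \delta_{q+1}^{\sfrac12}\lambda_{q+1}^{2\gamma+2\alpha-1}.
\]

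It remains to check that this is dominated by the target $\delta_{q+1}^{\sfrac12}\delta_q^{\sfrac12}\lambda_q\lambda_{q+1}^{4\alpha-1}$, which amounts to $\nu\,\lambda_{q+1}^{2\gamma-2\alpha}\lesssim \lambda_q^{1-\beta}$. Since $\nu\leq 1$ and $\lambda_{q+1}\sim \lambda_q^b$ up to constants controlled by \eqref{e:bloody_integers}, the inequality reduces to $b(2\gamma-2\alpha)\leq 1-\beta$; by \eqref{e:b_beta_rel} this is verified for $\alpha$ sufficiently small as soon as $\beta+2\gamma<1$, which is automatic since $\beta,\gamma<\sfrac13$. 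The main obstacle is therefore extracting the gain $\lambda_{q+1}^{-1}$ from the curl structure of $w_{q+1}$: the naive bound $\|\mathcal R\gammalaplace w_{q+1}\|_\alpha\lesssim \|w_{q+1}\|_{2\gamma+2\alpha}\lesssim \delta_{q+1}^{\sfrac12}\lambda_{q+1}^{2\gamma+2\alpha}$ scales one power of $\lambda_{q+1}$ too large and would rule out any admissible choice $b>1$, while passing through the potential $A$ is precisely what identifies $\gamma<\sfrac13$ as the working threshold.
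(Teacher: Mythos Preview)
Your treatment of $\mathring R_{q+1}^D$ takes a genuinely different route from the paper's. You use the curl representation \eqref{e:w_curl}, writing $w_{q+1}=-\lambda_{q+1}^{-1}\curl A$ and observing that $\RR\circ\curl$ is an order-zero Calder\'on--Zygmund operator; this extracts the factor $\lambda_{q+1}^{-1}$ for free and reduces everything to crude bounds on $\|A\|_{2\gamma+2\alpha}$, with no stationary phase needed. The paper instead commutes $\RR$ with $\gammalaplace$, applies the oscillatory estimate of Proposition~\ref{p:R:oscillatory_phase} to the Fourier expansions of $w_o$ and $w_c$ to get $\|\RR w_{q+1}\|_\alpha\lesssim\delta_{q+1}^{\sfrac12}\lambda_{q+1}^{\alpha-1}$ and $\|\RR w_{q+1}\|_1\lesssim\delta_{q+1}^{\sfrac12}\lambda_{q+1}^\alpha$, and only then interpolates and invokes Theorem~\ref{lapla.holder}. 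Your route is shorter and more conceptual; the paper's reuses the stationary-phase machinery already in place for $\mathring R_{q+1}^E$.

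There is, however, a gap in your closing parameter check. You correctly reduce to $b(2\gamma-2\alpha)\leq 1-\beta$, but the claim that this follows from \eqref{e:b_beta_rel} together with $\beta+2\gamma<1$ is false: since $b>1$, the inequality $2\gamma<1-\beta$ does not by itself force $2\gamma b<1-\beta$. Concretely, with $\beta=0.3$, $\gamma=0.32$, $b=1.15$ all hypotheses of Proposition~\ref{p:main} are met and $\beta+2\gamma=0.94<1$, yet $2\gamma b=0.736>0.7=1-\beta$. The paper's own remark after its proof records the sufficient condition as $(\gamma-3\alpha)b<1-\beta$, with exponent $\gamma$ rather than your $2\gamma$; together with $b<\tfrac{4}{3}$ this gives $\gamma b<\tfrac{4}{9}<\tfrac{2}{3}<1-\beta$ automatically. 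The discrepancy traces to the paper's intermediate step $\|\mathring R_{q+1}^D\|_0\lesssim\|\RR w_{q+1}\|_{\gamma+\alpha}$, whereas a direct application of Theorem~\ref{lapla.holder} (as in your argument) produces the exponent $2\gamma+\alpha$. Either your estimate needs sharpening to match the paper's, or you should close via the first branch $b<(1-\beta)/(2\beta)$ of \eqref{e:b_beta_rel}, which gives $2\gamma b<2\beta b<1-\beta$ whenever $\gamma\leq\beta$ --- and this is indeed the regime in which Proposition~\ref{p:main} is actually applied.
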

\begin{proof}
For the first term in the definition of the new Reynolds stress tensor we have
\[
\norm{\mathring R_{q+1}^E}_{0}\lesssim \frac{\delta_{q+1}^{\sfrac12}\delta_q^{\sfrac{1}{2}} \lambda_q}{\lambda_{q+1}^{1-4\alpha}} \,.
\]
We are not going to give the proof of the last estimate because, as already explained, it can be found in \cite[Proposition 6.1]{BDLSV2017}.
To estimate $\mathring R_{q+1}^D $ we first note that $\nu<1$ and the two operators $\RR$ and $\gammalaplace$ commute, therefore we can first estimate $ \norm{ \RR w_{q+1}}_0$ and $ \norm{ \RR w_{q+1}}_1$ from which, using Theorem \ref{lapla.holder} and interpolation in H\"older spaces, we conclude
\begin{equation}
\norm{\mathring R_{q+1}^D}_{0}\lesssim \norm{ \RR w_{q+1}}_{2\gamma+\alpha} \lesssim \norm{ \RR w_{q+1}}_0^{1-2\gamma-\alpha} \norm{ \RR w_{q+1}}_1^{2\gamma+\alpha}\, .
\end{equation}
By the definition of the new perturbations we have
\begin{align*}
& w_c=\sum_{i,k\neq 0} c_{i,k} e^{i\lambda_{q+1}k\cdot \Phi_i} \\
& w_o=\sum_{i,k\neq 0} L_{i,k} e^{i\lambda_{q+1}k\cdot \Phi_i}\, ,
\end{align*}
where $ L_{i,k}:=(\nabla\Phi_i)^{-1} b_{i,k}$. Using Proposition \ref{p:perturbation} we have
\begin{equation}\label{Likest}
\|L_{i,k}\|_N\leq \| (\nabla\Phi_i)^{-1}\|_N \| b_{i,k}\|_0+ \| (\nabla\Phi_i)^{-1}\|_0 \| b_{i,k}\|_N \lesssim \delta_{q+1}^{\sfrac12}|k|^{-6}\ell^{-N}\, .
\end{equation}
Using Proposition \ref{p:R:oscillatory_phase} and \eqref{Likest} we estimate
\begin{align*}
 \| \RR w_o\|_0 \leq \| \RR w_o\|_\alpha &\lesssim \sum_{i,k\neq 0} \frac{  \norm{L_{i,k}}_{0}}{\lambda_{q+1}^{1-\alpha}|k|^{1-\alpha}} +   \frac{ \norm{L_{i,k}}_{N+\alpha}+\norm{L_{i,k}}_{0}\norm{\Phi_i}_{N+\alpha}}{\lambda_{q+1}^{N-\alpha}|k|^{N-\alpha}}  \\
& \lesssim \delta_{q+1}^{\sfrac12} \sum_{k\neq 0} \frac{ 1 }{\lambda_{q+1}^{1-\alpha}|k|^{7-\alpha}} +   \frac{ \ell^{-N-\alpha}}{\lambda_{q+1}^{N-\alpha}|k|^{N-\alpha+7}} \lesssim \frac{\delta_{q+1}^{\sfrac12}}{\lambda_{q+1}^{1-\alpha}}\, ,
\end{align*}
where in the last inequality we have chosen $N$ big enough. It is not difficoult to see that we also have
\[
\| \RR w_o\|_1 \lesssim \delta_{q+1}^{\sfrac12}\lambda_{q+1}^{\alpha}\, ,
\]
since each term where we take a first derivative (in space) will gain a factor $\lambda_{q+1}$, thus by interpolation we conclude
\begin{equation}\label{stimawo}
\| \gammalaplace \RR w_o \|_0 \lesssim  \frac{\delta_{q+1}^{\sfrac12}}{\lambda_{q+1}^{1-\alpha}} \lambda_{q+1}^{2\gamma}\, .
\end{equation}
Now we observe that the estimate on the coefficients $c_{i,k}$ are better then those for the $L_{i,k}$'s, so that we also bound
\begin{equation}\label{stimawc}
\| \gammalaplace \RR w_c \|_0 \lesssim  \frac{\delta_{q+1}^{\sfrac12}}{\lambda_{q+1}^{1-\alpha}} \lambda_{q+1}^{2\gamma}\, .
\end{equation}
Finally, combining \eqref{stimawo}, \eqref{stimawc} and the restriction $\gamma <\sfrac{1}{3}$ we get
\[
\norm{\mathring R_{q+1}^D}_{0}\lesssim \frac{\delta_{q+1}^{\sfrac12}}{\lambda_{q+1}^{1-\alpha}} \lambda_{q+1}^{2\gamma} \lesssim \frac{\delta_{q+1}^{\sfrac12}\delta_q^{\sfrac{1}{2}} \lambda_q}{\lambda_{q+1}^{1-4\alpha}} \,.
\]
\end{proof}

\appendix

\section{Some estimates in H\"older spaces}\label{s:hoelder}

Recall the following elementary inequalities
\begin{proposition}
Let $f, g$ be two smooth functions. For any $r\geq s\geq 0$ we have
\begin{align}
[fg]_{r}&\leq C\bigl([f]_r\|g\|_0+\|f\|_0[g]_r\bigr)\label{e:Holderproduct}\\
[f]_{s}&\leq C\|f\|_0^{1-\sfrac{s}{r}}[f]_{r}^{\sfrac{s}{r}}.\label{e:Holderinterpolation2}
\end{align}
\end{proposition}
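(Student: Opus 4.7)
The plan is to prove the interpolation estimate \eqref{e:Holderinterpolation2} first, since it will then serve as the main workhorse for the product rule \eqref{e:Holderproduct}.

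For \eqref{e:Holderinterpolation2} I would start with the baseline case $r=1$, $s=\alpha\in(0,1)$: for any pair of distinct points, write $|f(x)-f(y)|\leq \min\bigl(2\|f\|_0,\,[f]_1|x-y|\bigr)$, divide by $|x-y|^\alpha$, and optimize over the separation length $h=|x-y|$ by choosing $h\sim\|f\|_0/[f]_1$; this yields $[f]_\alpha\lesssim \|f\|_0^{1-\alpha}[f]_1^\alpha$. The general case $0\leq s\leq r$ follows by splitting $s=\lfloor s\rfloor+\sigma$, $r=\lfloor r\rfloor+\tau$, applying the baseline argument either to a derivative (when $\sigma=0$) or to a difference quotient of a derivative (when $\sigma\in(0,1)$), and then iterating: at each step one interpolates between a norm controlling a lower-order derivative and one controlling a higher-order derivative, compounding the exponents to arrive at $\|f\|_0^{1-s/r}[f]_r^{s/r}$.

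For \eqref{e:Holderproduct}, I would treat the integer case $r=m\in\mathbb{N}$ first, expanding $D^\theta(fg)=\sum_{\sigma\leq\theta}\binom{\theta}{\sigma}D^\sigma f\,D^{\theta-\sigma}g$ by Leibniz. The two extreme terms ($\sigma=0$ and $\sigma=\theta$) already produce $\|f\|_0[g]_m+[f]_m\|g\|_0$; each intermediate contribution $[f]_j[g]_{m-j}$ with $0<j<m$ is reshaped, via the interpolation inequality just proved, into the form $\|f\|_0^{1-j/m}[f]_m^{j/m}\,\|g\|_0^{j/m}[g]_m^{1-j/m}$, after which Young's inequality with conjugate exponents $m/(m-j)$ and $m/j$ absorbs it into $C\bigl(\|f\|_0[g]_m+[f]_m\|g\|_0\bigr)$. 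For fractional $r=m+\alpha$ I would apply Leibniz to $D^\theta(fg)$ with $|\theta|=m$ and bound the $\alpha$-Hölder seminorm of each summand $D^\sigma f\cdot D^{m-\sigma}g$ using the standard product identity $u(x)v(x)-u(y)v(y)=u(x)\bigl(v(x)-v(y)\bigr)+v(y)\bigl(u(x)-u(y)\bigr)$, producing terms of the shape $[f]_\sigma[g]_{m-\sigma+\alpha}+[f]_{\sigma+\alpha}[g]_{m-\sigma}$, and then invoking interpolation once more.

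The only mild obstacle is the bookkeeping in this last step: keeping track of the fractional indices carefully enough so that the final bound involves only the four extreme quantities $\|f\|_0,\,\|g\|_0,\,[f]_r,\,[g]_r$. Once one trusts the interpolation inequality to collapse any intermediate seminorm product back to these extremes, the estimate is essentially mechanical, and no further ideas are required beyond Leibniz, interpolation, and Young's inequality.
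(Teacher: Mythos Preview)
Your argument is correct and follows the standard route to these classical inequalities. Note, however, that the paper does not actually supply a proof: the proposition is merely \emph{recalled} as an elementary fact in the appendix, with no argument given. So there is nothing to compare against; your proposal simply fills in what the paper treats as well known, and the strategy you outline (min-trick plus optimization for interpolation, Leibniz plus interpolation and Young for the product rule) is exactly the conventional one.
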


We also recall the quadratic commutator estimate of~\cite{CoETi1994} 
(cf. also \cite[Lemma 1]{CoDLSz2012}):

\begin{proposition}\label{p:CET}
Let $f,g\in C^{\infty}(\T^3\times\T)$ and $\psi$ a standard radial smooth and compactly supported kernel. For any $r\geq 0$  we have the estimate
\[
\Bigl\|(f*\psi_\ell)( g*\psi_\ell)-(fg)*\psi_\ell\Bigr\|_r\leq C\ell^{2-r}  \|f\|_1\|g\|_1 \, ,
\]
where the constant $C$ depends only on $r$.
\end{proposition}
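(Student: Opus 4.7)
The plan is to derive a pointwise symmetrization identity for
\begin{equation*}
h(x):=(f\ast\psi_\ell)(g\ast\psi_\ell)(x)-(fg)\ast\psi_\ell(x)
\end{equation*}
and then exploit the scaling of $\psi_\ell$. Multiplying $(fg)\ast\psi_\ell(x)$ by the factor $\int \psi_\ell(x-z)\,dz=1$ and symmetrizing in $y\leftrightarrow z$, one obtains
\begin{equation*}
h(x)=-\tfrac{1}{2}\iint \psi_\ell(x-y)\psi_\ell(x-z)\,[f(y)-f(z)]\,[g(y)-g(z)]\,dy\,dz,
\end{equation*}
which is the usual Constantin--E--Titi commutator identity and requires no structural assumption on $\psi$.

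For $r=0$ I would use that on the support of the integrand one has $|y-z|\lesssim \ell$, so
\[
|f(y)-f(z)|\,|g(y)-g(z)|\lesssim \ell^2[f]_1[g]_1\leq \ell^2\|f\|_1\|g\|_1,
\]
while $\|\psi_\ell\|_{L^1}=1$, giving $\|h\|_0\lesssim \ell^2\|f\|_1\|g\|_1$. For integer $r=N\geq 1$, I would differentiate under the integral, noting that the $x$-dependence is carried entirely by the kernel $\Psi_\ell(x;y,z):=\psi_\ell(x-y)\psi_\ell(x-z)$; the brackets $[f(y)-f(z)][g(y)-g(z)]$ are $x$-independent. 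Using $|\partial_x^\alpha \Psi_\ell|\lesssim \ell^{-6-|\alpha|}$, the volume of support $\lesssim \ell^6$, and the same pointwise factor $\ell^2\|f\|_1\|g\|_1$, one concludes $\|h\|_N\lesssim_N \ell^{2-N}\|f\|_1\|g\|_1$.

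For fractional $r=N+\alpha$ with $\alpha\in(0,1)$, I would interpolate between the integer bounds. Applying the inequality $[F]_\alpha\leq C\|F\|_0^{1-\alpha}\|F\|_1^\alpha$ (i.e.~\eqref{e:Holderinterpolation2}) to $F=\partial^\theta h$ with $|\theta|=N$, and combining $\|\partial^\theta h\|_0\leq\|h\|_N$ and $\|\partial^\theta h\|_1\leq \|h\|_{N+1}$, yields
\[
[\partial^\theta h]_\alpha\lesssim \bigl(\ell^{2-N}\bigr)^{1-\alpha}\bigl(\ell^{1-N}\bigr)^\alpha\|f\|_1\|g\|_1=\ell^{2-r}\|f\|_1\|g\|_1,
\]
and summing with the integer estimate at level $N$ gives $\|h\|_r\lesssim \ell^{2-r}\|f\|_1\|g\|_1$.

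The main obstacle, such as it is, is purely bookkeeping: matching the powers of $\ell$ gained from derivatives of the kernel against the $\ell^6$ volume of support, so that the cancellation responsible for the crucial $\ell^2$ factor is preserved. There is no genuine analytic difficulty beyond the symmetrization identity itself, which is the only place where the bilinear structure of the commutator is used.
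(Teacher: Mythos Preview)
Your proposal is correct and is precisely the standard Constantin--E--Titi symmetrization argument; the paper does not give its own proof but simply refers to \cite{CoETi1994} and \cite[Lemma 1]{CoDLSz2012}, whose proofs proceed exactly as you outline. The identity, the $r=0$ bound, the kernel-differentiation for integer $r$, and the interpolation for fractional $r$ are all carried out correctly.
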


\section{Estimates on the fractional Laplacian}

\begin{theorem}{(Interaction with Holder spaces)}\label{lapla.holder}
Let $\gamma, \varepsilon>0$ and $\beta\geq0$  such that $2\gamma+\beta+\varepsilon\leq 1$, and let $f:\mathbb{T}^3 \rightarrow \mathbb{R}^3$. If  $f \in C^{0,2 \gamma +\beta+ \varepsilon} $,  then $\gammalaplace f \in C^\beta$, moreover there exists a constant $C=C(\varepsilon)>0$ such that 
\begin{equation}\label{roncalholderformula}
\| (-\Delta)^\gamma f\|_\beta \leq C(\varepsilon) [f]_{2 \gamma +\beta+ \varepsilon}.
\end{equation}

 \end{theorem}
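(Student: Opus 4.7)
The plan is to reduce the estimate to the pointwise singular-integral representation of $\gammalaplace$ and then split the integration domain according to the size of the increment. The condition $2\gamma+\beta+\varepsilon\leq 1$ with $\varepsilon>0$ forces $2\gamma<1$, which is the range in which, extending $f$ periodically to $\R^3$, one has
\[
\gammalaplace f(x) = C_\gamma \int_{\R^3} \frac{2f(x) - f(x+h) - f(x-h)}{|h|^{3+2\gamma}}\, dh\,.
\]
The second-difference structure makes the integrand integrable near the origin as soon as $f\in C^s$ with $s>2\gamma$, while the boundedness of periodic $f$ gives integrability at infinity since $3+2\gamma>3$. Because $\gammalaplace$ annihilates constants and $[f]_{2\gamma+\beta+\varepsilon}$ is invariant under subtracting one, I may further assume $\int_{\T^3} f\,dx = 0$ and invoke Poincar\'e to write $\|f\|_0\lesssim [f]_{2\gamma+\beta+\varepsilon}$.

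Setting $s:=2\gamma+\beta+\varepsilon$, the sup-norm estimate follows by splitting the representation at $|h|=1$: on $|h|\leq 1$ the bound $|2f(x)-f(x+h)-f(x-h)|\leq 2[f]_s|h|^s$ and the radial integral $\int_0^1 r^{\beta+\varepsilon-1}\,dr = 1/(\beta+\varepsilon)$ give a contribution $\lesssim [f]_s/(\beta+\varepsilon)$, while on $|h|>1$ the crude bound $4\|f\|_0\lesssim [f]_s$ combined with the convergent integral $\int_{|h|>1}|h|^{-3-2\gamma}\,dh$ controls the remainder. This yields $\|\gammalaplace f\|_0\lesssim C(\varepsilon)[f]_s$. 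For the seminorm $[\gammalaplace f]_\beta$, fix $x\neq y$ on $\T^3$ (so that $|x-y|$ is bounded by a constant), set $g=\gammalaplace f$ and $[\Delta_h^2 f](z):=2f(z)-f(z+h)-f(z-h)$, and split
\[
g(x)-g(y) = C_\gamma \int_{\R^3} \frac{[\Delta_h^2 f](x)-[\Delta_h^2 f](y)}{|h|^{3+2\gamma}}\,dh
\]
at $|h|=|x-y|$. On $|h|\leq|x-y|$ the pointwise estimate $|[\Delta_h^2 f](z)|\leq 2[f]_s|h|^s$ gives a contribution $\lesssim [f]_s|x-y|^{\beta+\varepsilon}$. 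On $|h|>|x-y|$, the identity
\[
[\Delta_h^2 f](x)-[\Delta_h^2 f](y) = 2(f(x)-f(y)) - (f(x+h)-f(y+h)) - (f(x-h)-f(y-h)),
\]
bounded termwise by $[f]_s|x-y|^s$ and combined with $\int_{|h|>|x-y|}|h|^{-3-2\gamma}\,dh \sim |x-y|^{-2\gamma}$, gives again $\lesssim [f]_s|x-y|^{\beta+\varepsilon}$. Since $\beta+\varepsilon\geq\beta$ and $|x-y|$ is bounded, we conclude $[g]_\beta\lesssim C(\varepsilon)[f]_s$.

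The main technical point is the dependence on $\varepsilon$: the radial integral $\int_0^1 r^{\beta+\varepsilon-1}\,dr = 1/(\beta+\varepsilon)$ degenerates as $\varepsilon\to 0$ precisely when $\beta=0$, which is exactly the source of the constant $C(\varepsilon)$ in the statement. A secondary subtlety is justifying the singular-integral formula on the torus, which I would handle by first verifying it on trigonometric polynomials via Fourier (where both sides are diagonal) and then passing to $C^s$ by density. An equivalent route avoiding the integral formula is the heat-semigroup (Balakrishnan) representation $\gammalaplace f = c_\gamma\int_0^\infty t^{-1-\gamma}(f-e^{t\Delta}f)\,dt$ together with standard smoothing estimates $\|e^{t\Delta}f - f\|_\beta \lesssim \min(\|f\|_\beta,\, t^{(s-\beta)/2}[f]_s)$; the two approaches are qualitatively equivalent, but the singular-integral one is the cleanest.
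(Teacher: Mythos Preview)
Your argument is correct. The singular-integral representation you use is precisely the one established in Roncal--Stinga for the periodic fractional Laplacian in the range $0<\gamma<\tfrac12$, and your two splittings (at $|h|=1$ for the sup-norm, at $|h|=|x-y|$ for the seminorm) are the standard way to turn that representation into the desired H\"older bound. The dependence on $\varepsilon$ you isolate is exactly right.

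The paper proceeds differently. It does not redo the singular-integral computation: it simply cites \cite{RS2016} for the case $\beta=0$, i.e. $\|\gammalaplace f\|_0\leq C(\varepsilon)[f]_{2\gamma+\varepsilon}$, and then obtains the general case by a one-line translation trick. Namely, since $\gammalaplace$ commutes with translations, one applies the $\beta=0$ bound to $f(\cdot+h)-f(\cdot)$ and uses the elementary second-difference inequality
\[
[f(\cdot+h)-f(\cdot)]_{2\gamma+\varepsilon}\leq C\,|h|^\beta [f]_{2\gamma+\beta+\varepsilon}\,,
\]
which immediately gives $|\gammalaplace f(x+h)-\gammalaplace f(x)|\leq C(\varepsilon)|h|^\beta[f]_{2\gamma+\beta+\varepsilon}$. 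Your approach is more self-contained and makes the origin of the constant $C(\varepsilon)$ explicit; the paper's approach is shorter but outsources the $\beta=0$ case, and its reduction step is a nice structural observation (the $C^\beta$ bound for $\gammalaplace$ follows from the $C^0$ bound applied to first differences) that is worth remembering.
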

\begin{proof}
The proof of \eqref{roncalholderformula} for $\beta=0$ can be found in \cite{RS2016}, Theorem 1.4. Fix now $\beta>0$. For any $ h \in \T^3$ we have
\begin{align*}
\| \gammalaplace (f(\cdot+h)-f(\cdot)) \|_0\leq C(\varepsilon)[ f(\cdot+h)-f(\cdot)]_{2\gamma+\varepsilon}\leq C(\varepsilon)|h|^\beta [f]_{2\gamma+\beta+\varepsilon}\, ,
\end{align*}
from which \eqref{roncalholderformula} follows.
\end{proof}

\begin{corollary}\label{vlaplv}
Let $\gamma \in (0,1)$, $\varepsilon >0$ be such that $0<\gamma+\varepsilon\leq 1$, and let $f:\mathbb{T}^3 \rightarrow \mathbb{R}^3$. There exist a constant $C=C(\varepsilon)>0$ such that
\begin{equation}
\int_{\mathbb{T}^3} |(-\Delta)^{\sfrac{\gamma}{2}} f|^2 (x) dx \leq C(\varepsilon) [f]_{\gamma+\varepsilon}^2
\qquad \forall f \in C^{\gamma+\varepsilon} (\T^3)\, .
\end{equation}
\end{corollary}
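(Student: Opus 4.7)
The plan is to deduce this corollary as a direct consequence of Theorem \ref{lapla.holder}, applied with the fractional power $\gamma/2$ in place of $\gamma$ and with the H\"older exponent $\beta = 0$. First, I would verify the compatibility of the parameters: Theorem \ref{lapla.holder} requires $2\gamma_{\text{theorem}} + \beta + \varepsilon \leq 1$, and setting $\gamma_{\text{theorem}} = \gamma/2$, $\beta = 0$, this becomes $\gamma + \varepsilon \leq 1$, which is exactly the standing hypothesis of the corollary.

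Under this choice, Theorem \ref{lapla.holder} gives the pointwise bound
\[
\|(-\Delta)^{\sfrac{\gamma}{2}} f\|_0 \leq C(\varepsilon)\, [f]_{\gamma + \varepsilon}.
\]
Squaring and integrating over $\T^3$ (which has finite measure) then yields
\[
\int_{\T^3} |(-\Delta)^{\sfrac{\gamma}{2}} f|^2(x)\, dx \;\leq\; |\T^3|\cdot \|(-\Delta)^{\sfrac{\gamma}{2}} f\|_0^2 \;\leq\; C'(\varepsilon)\,[f]_{\gamma+\varepsilon}^2,
\]
with $C'(\varepsilon) = |\T^3|\, C(\varepsilon)^2$, which gives the claim.

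There is no real obstacle here: the corollary is essentially a packaging of the $L^\infty$-estimate from Theorem \ref{lapla.holder} together with the finiteness of the measure of the torus. The only mild care needed is that $(-\Delta)^{\sfrac{\gamma}{2}}f$ is well-defined (recall it is defined via the Fourier series with vanishing zero-mode), but this is built into the framework already used in the paper, and the vector-valued case $f:\T^3\to\R^3$ follows by applying the scalar estimate componentwise.
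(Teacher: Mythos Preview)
Your proposal is correct and matches the paper's intent: the paper states this result as a corollary of Theorem \ref{lapla.holder} without giving an explicit proof, and your argument --- applying Theorem \ref{lapla.holder} with exponent $\gamma/2$ and $\beta=0$ to obtain a uniform bound, then integrating over the finite-measure torus --- is precisely the intended derivation.
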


\section{Potential theory estimates}
We recall the definition of the standard class of periodic Calder{\'o}n-Zygmund operators.
Let $K$ be an $\R^3$ kernel which obeys the properties
\begin{itemize}
\item $ K(z) = \Omega\left(\frac{z}{|z|}\right) |z|^{-3} $, for all $z\in\R^3 \setminus \{0\}$ 
\item $\Omega \in C^\infty({\mathbb S}^2)$
\item $\int_{|\hat z|=1} \Omega(\hat z) d\hat z = 0$.
\end{itemize}
From the $\R^3$ kernel $K$, use Poisson summation to define the periodic kernel 
\begin{align*}
K_{\T^3}(z) =  K(z) + \sum_{\ell \in {\mathbb Z}^3 \setminus \{0\}} \left( K(z+\ell) - K(\ell) \right).
\end{align*}
Then the operator
\begin{align*}
T_K f(x) = p.v. \int_{\T^3} K_{\T^3}(x-y) f(y) dy
\end{align*}
is a $\T^3$-periodic Calder{\'o}n-Zygmund operator, acting on $\T^3$-periodic functions $f$ with zero mean on $\T^3$.
The following proposition, proving the boundedness of periodic Calder{\'o}n-Zygmund operators on periodic H\"older spaces is classical (see \cite{CZ1954}).
\begin{proposition}
\label{p:CZO_C_alpha}
Fix $\alpha \in (0,1)$. Periodic Calder{\'o}n-Zygmund operators are bounded on the space of zero mean $\T^3$-periodic $C^\alpha$ functions. 
\end{proposition}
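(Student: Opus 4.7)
The plan is to reduce to two standard bounds, namely $\|T_K f\|_0 \lesssim \|f\|_\alpha$ and $[T_K f]_\alpha \lesssim [f]_\alpha$, from which $\|T_K f\|_\alpha \lesssim \|f\|_\alpha$ follows. The three ingredients driving the argument are: (i) the cancellation $\int_{\mathbb S^2}\Omega(\hat z)\,d\hat z=0$, which allows us to subtract $f(x)$ from $f(y)$ inside the kernel integral and thereby absorb the non-integrable singularity; (ii) the smoothness $\Omega\in C^\infty(\mathbb S^2)$, which yields the pointwise kernel bounds $|K(z)|\lesssim |z|^{-3}$ and $|\nabla K(z)|\lesssim |z|^{-4}$ on $\mathbb R^3\setminus\{0\}$; and (iii) the fact that the periodic correction $K_{\mathbb T^3}(z)-K(z)=\sum_{\ell\ne 0}(K(z+\ell)-K(\ell))$ is $C^\infty$ on $\mathbb T^3$ (by telescoping and the decay of $\nabla K$ at infinity), so $K_{\mathbb T^3}$ agrees with the $\mathbb R^3$ kernel $K$ modulo a smooth function on the torus. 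The zero-mean hypothesis on $f$ is needed precisely to make the operator well defined (equivalently, to kill the undefined $k=0$ Fourier mode of the multiplier associated with $T_K$).

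First I would establish the $C^0$ estimate. Using the cancellation of $\Omega$ together with a suitable smooth cutoff (say $\chi$ supported in a small ball around the origin where $K_{\mathbb T^3}=K$ plus a smooth piece), I rewrite
\[
T_K f(x)=\text{p.v.}\int_{\mathbb T^3}K_{\mathbb T^3}(x-y)\bigl(f(y)-f(x)\bigr)\,dy+S f(x),
\]
where $S$ has a smooth, integrable kernel on $\mathbb T^3$ and is therefore trivially bounded on $C^0$. On $\{|x-y|<r\}$ I use $|f(y)-f(x)|\le [f]_\alpha|x-y|^\alpha$ against $|K(x-y)|\lesssim |x-y|^{-3}$, which yields an integrable singularity of order $|x-y|^{\alpha-3}$; on $\{|x-y|\ge r\}$ the kernel is bounded and I use $\|f\|_0$. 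Summing gives $\|T_K f\|_0\lesssim \|f\|_\alpha$.

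Next I would handle the Hölder seminorm. Fix two points $x_1,x_2\in\mathbb T^3$ and let $h:=|x_1-x_2|$. Split the integration domain into the near region $B(x_1,2h)$ and its complement. On the near piece I estimate $|T_K f(x_j)|$ restricted to that ball by the same argument as above with integration radius $2h$, yielding a contribution $\lesssim h^\alpha[f]_\alpha$ for each $j=1,2$. On the far piece I insert the cancellation
\[
\int_{|y-x_1|>2h}\bigl[K_{\mathbb T^3}(x_1-y)-K_{\mathbb T^3}(x_2-y)\bigr]\bigl(f(y)-f(x_1)\bigr)\,dy,
\]
apply the mean-value theorem (valid since the segment from $x_1-y$ to $x_2-y$ avoids the origin on this region) to get $|K_{\mathbb T^3}(x_1-y)-K_{\mathbb T^3}(x_2-y)|\lesssim h\,|x_1-y|^{-4}$, and use $|f(y)-f(x_1)|\le[f]_\alpha |x_1-y|^\alpha$. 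The resulting integral $\int_{|x_1-y|>2h} h\,|x_1-y|^{\alpha-4}\,dy$ converges and is bounded by $h^\alpha$. Adding both pieces yields $|T_K f(x_1)-T_K f(x_2)|\lesssim h^\alpha [f]_\alpha$, which is the desired seminorm estimate.

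The main obstacle is the periodicity bookkeeping: one has to verify that the Poisson-summed kernel $K_{\mathbb T^3}$ inherits the kernel estimates of $K$ modulo a $C^\infty(\mathbb T^3)$ function, and that the principal value is well defined on zero-mean $C^\alpha(\mathbb T^3)$ functions (here the subtractions $K(\ell)$ in the definition of $K_{\mathbb T^3}$ and the mean-zero assumption on $f$ work together to make the sum absolutely convergent). Once these technical points are settled, the standard Euclidean Calderón–Zygmund argument on Hölder spaces carries over essentially verbatim, as in the classical reference \cite{CZ1954}.
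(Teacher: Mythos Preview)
Your proposal is correct, and in fact the paper does not supply its own proof of this proposition at all: it simply records the statement as classical and refers the reader to \cite{CZ1954}. Your sketch is precisely the standard Calder\'on--Zygmund argument on H\"older spaces (subtract $f(x)$ via the cancellation of $\Omega$ for the $C^0$ bound, then use the near/far splitting with the gradient bound $|\nabla K|\lesssim |z|^{-4}$ for the seminorm), together with the observation that the Poisson-summed correction $K_{\T^3}-K$ is smooth on $\T^3$; this is essentially the content of the cited reference transported to the periodic setting, so there is nothing to compare.
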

 
The following is a simple consequence of classical stationary phase techniques. For a detailed proof the reader might consult \cite[Lemma 2.2]{DaSz2016}.
\begin{proposition}
\label{p:R:oscillatory_phase}
Let  $\alpha \in(0,1)$ and $N \geq 1$. Let $a \in C^\infty(\T^3)$, $\Phi\in C^\infty(\T^3;\R^3)$ be smooth functions and assume that
\begin{equation*}
\hat{C}^{-1}\leq \abs{\nabla \Phi}, | \nabla \Phi^{-1}|\leq \hat{C}
\end{equation*}
holds on $\T^3$. Then 
\begin{equation}\label{e:phase_integration}
\abs{\int_{\T^3}a(x)e^{ik\cdot \Phi}\,dx}\lesssim  \frac{ \norm{a}_{N}+\norm{a}_{0}\norm{\Phi}_{N}}{|k|^{N}} \, ,
\end{equation}
and for the operator $\RR$ defined in \eqref{e:R:def}, we have
 \begin{align*}
\norm{ {\mathcal R} \left(a(x) e^{i k\cdot \Phi} \right)}_{\alpha} 
&\lesssim \frac{  \norm{a}_{0}}{|k|^{1-\alpha}} +   \frac{ \norm{a}_{N+\alpha}+\norm{a}_{0}\norm{\Phi}_{N+\alpha}}{|k|^{N-\alpha}} \, ,
\end{align*}
where the implicit constant depends on $\hat{C}$, $\alpha$ and $N$, but not on $k$.
\end{proposition}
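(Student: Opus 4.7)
The plan is to prove Proposition~\ref{p:R:oscillatory_phase} by the classical non-stationary phase method, treating the two estimates in sequence.

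For the oscillatory integral bound \eqref{e:phase_integration}, I would first establish the key integration-by-parts identity
\[
e^{ik\cdot\Phi(x)} = \frac{1}{i|k|^2}\bigl((\nabla\Phi)^{-1}k\bigr)\cdot\nabla e^{ik\cdot\Phi(x)},
\]
which follows by applying $(\nabla\Phi)^{-T}$ to $\nabla e^{ik\cdot\Phi}=i(\nabla\Phi)^T k \,e^{ik\cdot\Phi}$ and contracting with $k$, using that $\nabla\Phi$ is uniformly invertible by assumption. Substituting into $\int a e^{ik\cdot\Phi}\,dx$ and integrating by parts produces a factor $|k|^{-1}$ at the cost of one spatial derivative that falls either on $a$ or on $(\nabla\Phi)^{-1}$. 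Iterating $N$ times and expanding $\nabla^j((\nabla\Phi)^{-1})$ via Faà di Bruno (using the relation $(\nabla\Phi)(\nabla\Phi)^{-1}=\Id$ together with the upper bound $|\nabla\Phi|\le\hat C$ and lower bound $|\nabla\Phi^{-1}|\le\hat C$) one obtains a sum of terms in which the derivatives are distributed among $a$ and the entries of $(\nabla\Phi)^{-1}$. The Hölder product inequality \eqref{e:Holderproduct} together with interpolation \eqref{e:Holderinterpolation2} then reduces every term to one of the two extremes $\|a\|_N$ or $\|a\|_0\|\Phi\|_N$, yielding \eqref{e:phase_integration}.

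For the second estimate, on $\|\RR(a e^{ik\cdot\Phi})\|_\alpha$, I would exploit that by \eqref{e:R:def} the operator $\RR$ is, schematically, $\Delta^{-1}$ composed with first-order differential operators: $\RR = \nabla\Delta^{-1}\circ T$ where $T$ is a combination of identity and first-order Calderón–Zygmund-type operators. Hence it is enough to control $\Delta^{-1}(a e^{ik\cdot\Phi})$ in $C^{1+\alpha}$, which after taking the two derivatives making $\RR$ of order $-1$ reduces to a Calderón–Zygmund operator applied to the "amplitude". The strategy is to apply the same integration-by-parts identity at the operator level: rewrite
\[
a\,e^{ik\cdot\Phi}=\frac{1}{i|k|^2}\nabla\cdot\bigl(a(\nabla\Phi)^{-1}k\,e^{ik\cdot\Phi}\bigr)-\frac{1}{i|k|^2}\,e^{ik\cdot\Phi}\Bigl[(\nabla\Phi)^{-1}k\cdot\nabla a+a\,\nabla\cdot\bigl((\nabla\Phi)^{-1}k\bigr)\Bigr].
\]
Under $\RR$, the divergence term turns into a Calderón–Zygmund operator $T_0$ of order $0$ acting on the amplitude $a(\nabla\Phi)^{-1}k$, which is handled by Proposition~\ref{p:CZO_C_alpha} and yields the term $\|a\|_0/|k|^{1-\alpha}$ (the $\alpha$ loss coming from the $C^\alpha$ operator bound). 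The remaining non-divergence terms have a $|k|^{-1}$ amplitude gain at the cost of one extra derivative on $a$ or $\Phi$; iterating this procedure $N{-}1$ further times transforms $\RR(a e^{ik\cdot\Phi})$ into a principal Calderón–Zygmund piece plus a remainder that still has the form $|k|^{-N+\alpha}\,T_N(P_N(a,\Phi)\,e^{ik\cdot\Phi})$ with $T_N$ a $C^\alpha$-bounded CZ operator and $P_N(a,\Phi)$ a differential polynomial of order at most $N$ in $a$ and $\Phi$. Applying Proposition~\ref{p:CZO_C_alpha} to every term and performing the same interpolation and Leibniz bookkeeping as in Part~1 to estimate $P_N(a,\Phi)$ in $C^\alpha$ produces the second summand $(\|a\|_{N+\alpha}+\|a\|_0\|\Phi\|_{N+\alpha})/|k|^{N-\alpha}$.

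The main obstacle I expect is the combinatorial bookkeeping in the second estimate: each iteration produces several new terms involving mixed derivatives of $a$ and of $(\nabla\Phi)^{-1}$, and one must verify that in every such term the total derivative order on $a$ plus on $\Phi$ does not exceed $N+\alpha$, and that no single term has more than $N$ derivatives on $a$ alone or more than $N$ on $\Phi$ alone (otherwise the clean bound with $\|a\|_0\|\Phi\|_{N+\alpha}$ and $\|a\|_{N+\alpha}$ would fail). This is controlled via the Faà di Bruno expansion of derivatives of $(\nabla\Phi)^{-1}$ combined with the Hölder interpolation \eqref{e:Holderinterpolation2}, in the same spirit as the proof of Lemma 2.2 in \cite{DaSz2016}, which is cited as the reference for this proposition.
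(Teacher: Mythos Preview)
Your approach is correct and is precisely the classical non-stationary phase argument that the paper has in mind: the paper does not give its own proof of this proposition but simply defers to \cite[Lemma~2.2]{DaSz2016}, whose proof proceeds exactly by the integration-by-parts identity you write down, iterated $N$ times, together with the Calder\'on--Zygmund/H\"older bookkeeping you describe. There is nothing to compare --- your outline \emph{is} the referenced proof.
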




\end{document}